\newtheorem{theorem}{Theorem}[section]
\newtheorem{lemma}[theorem]{Lemma}
\newtheorem{proposition}[theorem]{Proposition}
\newtheorem{corollary}[theorem]{Corollary}
\theoremstyle{definition}
\newtheorem{example}[theorem]{Example}
\theoremstyle{remark}
\newtheorem{remark}[theorem]{Remark}
\numberwithin{table}{section}
\numberwithin{figure}{section}
\def\ZZ{{\mathbb Z}}\def\CC{{\mathbb C}}\def\FF{{\mathbb F}}
\def\comb{\mathrm{comb}}\def\qand{\quad\text{and}\quad}
\newcommand{\nola}{{\rule{2pt}{0pt}}} 
\renewcommand{\SS}{{\mathfrak S}}
\def\P{\mathcal P} \def\tC{\widetilde{C}} 
\newcommand{\T}{{\mathcal T}} 
\DeclareMathOperator{\ld}{\delta} \DeclareMathOperator{\rd}{\rho}
\newcommand{\defcolor}[1]{{\color{blue}{#1}}}
\newcommand{\demph}[1]{\defcolor{\it #1}}
\title{Variations of the Catalan numbers from some nonassociative binary operations}
\author{Nickolas Hein}
\address{Department of Mathematics and Computer Science, Benedictine College, Atchison, KS 66002, USA}
\email{nhein@benedictine.edu}
\author{Jia Huang}
\address{Department of Mathematics and Statistics, University of Nebraska at Kearney, Kearney, NE 68849, USA}
\email{huangj2@unk.edu}
\thanks{The authors thank Brendon Rhoades for giving a proof of Proposition~\ref{prop:NumberOfIdeals} and allowing us to include it in this paper, and thank Alex Miller for helpful conversations.}
\keywords{Binary operation, binary tree, Catalan number, nonassociativity.}
\begin{document}

\begin{abstract}
We investigate certain nonassociative binary operations that satisfy a four-parameter generalization of the associative law. 
From this we obtain variations of the ubiquitous Catalan numbers and connections to many interesting combinatorial objects such as binary trees, plane trees, lattice paths, and permutations.
\end{abstract}

\maketitle

\section{Introduction}

\subsection{Background and Motivation}
Binary operations are widely used in mathematics and other fields.
Some operations are associative, including addition, multiplication, union, intersection, and function composition.
Others are not, such as subtraction, division, exponentiation, vector cross product, and Lie algebra multiplication.
We consider a natural question: to what degree is a given operation nonassociative?

We use $*$ to denote a binary operation on a set $A$ and $a_i$, for $i\in \mathbb N$, to denote an $A$-valued indeterminate.
Let $\defcolor{\P_{*,n}}$ be the set of all parenthesizations of the otherwise ambiguous expression $a_0*\cdots*a_n$.
The set $\P_{*,n}$ is in bijection with the set of (full) binary trees with $n+1$ leaves, denoted by $\defcolor{\T_n}$.
We illustrate $\P_{*,3}\leftrightarrow \T_3$ below.
\[\scriptsize\begin{array}{ccccc}
\Tree [.  [. [. 0 1 ] 2 ] 3 ] &
\Tree [.  [. 0 1 ] [. 2 3 ] ] &
\Tree [.  [. 0 [. 1 2 ] ] 3 ] &
\Tree [. 0 [. [. 1 2 ] 3 ] ] &
\Tree [. 0  [. 1 [. 2 3 ] ] ] \\
\updownarrow & \updownarrow & \updownarrow & \updownarrow & \updownarrow \\
\rule{5pt}{0pt}((a_0 {*} a_1) {*} a_2) {*} a_3\rule{5pt}{0pt} &
\rule{5pt}{0pt}(a_0 {*} a_1) {*} (a_2 {*} a_3)\rule{5pt}{0pt} &
\rule{5pt}{0pt}(a_0 {*} (a_1{*} a_2)) {*} a_3 \rule{5pt}{0pt} &
\rule{5pt}{0pt}a_0 {*} ((a_1 {*} a_2) {*} a_3)\rule{5pt}{0pt} &
\rule{5pt}{0pt}a_0 {*} (a_1 {*} (a_2 {*} a_3))\rule{5pt}{0pt}
\end{array}\]
One reason $\P_{*,n}$ and $\T_n$ must be in bijection is that they are each enumerated by the \demph{Catalan number} $\defcolor{C_n}:=\frac{1}{n+1} {2n\choose n}$ which does not depend on $*$.

In this paper we investigate two nonassociativity measurements, viewing the failure of a binary operation to be associative through the lens of its inequivalent parenthesizations.
As such, we broaden our view to other Catalan objects.
We say trees $t,t'\in\T_n$ are \demph{$(*,n)$-equivalent}, written $\defcolor{t \sim_* t'}$, if the corresponding parenthesizations are equal as functions from $A^{n+1}$ to $A$.
This is an equivalence relation on a set of Catalan objects, and for brevity we say its equivalence classes are \demph{$(*,n)$-classes}.
We define $\defcolor{C_{*,n}}$ to be the number of $(*,n)$-classes and immediately observe $1\le C_{*,n}\le C_n$.
We now have an alternate definition of associativity which agrees with the traditional meaning: $*$ is associative if $C_{*,n}=1$ for all $n\in \mathbb N$.
Thus $C_{*,n}$ measures the failure of $*$ to be associative.
We say $*$ is \demph{totally nonassociative} if our measure for nonassociativity attains its theoretical upper bound, $C_{*,n}=C_n$.

Alternatively, one may quantify nonassociativity by computing the cardinality of the largest $(*,n)$-class, denoted $\defcolor{\tC_{*,n}}$, for each $n$.
As with our other measure of nonassociativity, we have $1\le \tC_{*,n}\le C_n$. 
One may see that $\tC_{*,n}=1$ if and only if $*$ is totally nonassociative and $\tC_{*,n}=C_n$ if and only if $*$ is associative.
Moreover, $2\le C_{*,n}+\tC_{*,n}\le C_n+1$.

Cs\'{a}k\'{a}ny and Waldhauser~\cite{AssociativeSpectra1} refered to the sequence $(C_{*,n})_{n=1}^\infty$ as the \demph{associative spectrum}, and Braitt and Silberger~\cite{Subassociative} called it the \demph{subassociativity type}.
Its generalization to $m$-ary operations for $m\ge2$ was introduced by Liebscher and Waldhauser~\cite{m-ary}.
Independently, we proposed the study of both $C_{*,n}$ and $\tC_{*,n}$ in our earlier work~\cite{CatMod}, and Msapatoa generalized our work to $m$-ary operations~\cite{FussCatMod}.

The nonassociativity of a binary operation $*$ has been studied in other ways. For example, Lord~\cite{Lord} introduced the \demph{depth of nonassociativity}, which is given by $\inf \{ n+1: C_{*,n}<C_n\} = \inf \{ n+1: \tC_{*,n} > 1 \}$ and is thus refined by either measurement $C_{*,n}$ or $\tC_{*,n}$.
Another example is the \demph{index of nonassociativity}, which is the cardinality of the set $\{(a,b,c)\in A^3: (a*b)*c\ne a*(b*c)\}$.  This measure, studied by various people, does not appear directly related to $C_{*,n}$ or $\tC_{*,n}$; see~\cite[Section~3]{AlmostAssociative} and the references therein.


Addition and subtraction are among the simplest examples of associative and nonassociative binary operations, respectively.
Cs\'{a}k\'{a}ny--Waldhauser~\cite{AssociativeSpectra1} and Braitt--Silberger~\cite{Subassociative} each showed $C_{-,n}=2^{n-1}$ for all $n\ge1$.
In previous work~\cite{CatMod}, we investigated the nonassociativity of a $1$-parameter family of binary operations which generalize addition and subtraction.
Here, we further generalize to a $4$-parameter family (depending on $d,e,k,\ell$) giving a richer class of examples of binary operations that are neither associative nor totally nonassociative.
We are mainly interested in the sequences of numbers $C_{*,n}$ and $\tC_{*,n}$ for $*$ in this $4$-parameter family.
Our prototypical example is the binary operation $*$ on $\CC[x,y]/I$ given by
\begin{equation}\label{eq:*}
f*g := xf + yg, \quad\forall f,g\in \CC[x,y]/I
\end{equation}
where $I=(x^{d+k}-x^d,y^{e+\ell}-y^e)$ is an ideal of the polynomial ring $\CC[x,y]$.
Though we are presently interested in the ideal given above, one may more generally study binary operations defined by $\eqref{eq:*}$ with $I$ being any ideal.

\subsection{Terminology and notation}
A parenthesization corresponding to $t\in\T_n$ has the form
\begin{equation}\label{eq:expansion}
x^{\ld_0(t)} y^{\rd_0(t)} f_0 + \cdots + x^{\ld_n(t)} y^{\rd_n(t)} f_n. 
\end{equation}
Here we list the leaves of $t$ as $0,1,\ldots,n$ according to \demph{preorder} and define the \demph{left depth} $\defcolor{\ld_i(t)}$ (resp., \demph{right depth} $\defcolor{\rd_i(t)}$) of $i$ to be the number of left (resp., right) steps along the unique path from the root of $t$ down to $i$.
Equivalently, this left (resp., right) depth enumerates the left (resp., right) children in the unique path from the root of $t$ down to $i$ (ignoring the root of $t$ if it is a left child).
The map sending each $t\in\T_n$ to its \demph{left depth} $\defcolor{\ld(t)} := (\ld_0(t),\ldots,\ld_n(t))$ is one-to-one~\cite[\S2.1]{CatMod}.
Symmetrically, the map sending each binary tree $t\in\T_n$ to its \demph{right depth} $\defcolor{\rd(t)} := (\rd_0(t),\ldots,\rd_n(t))$ is also one-to-one.

To characterize $(*,n)$-equivalence for $*$ defined by \eqref{eq:*}, we define some equivalence relations between two sequences $\mathbf{b}=(b_0,\ldots,b_n)$ and $\mathbf{c}=(c_0,\ldots,c_n)$ of nonnegative integers:
\begin{itemize}\itemsep=3pt
\item
$\defcolor{\mathbf{b} \sim_k \mathbf{c}}$ if $b_i\equiv c_i\pmod k$ for $i=0,\ldots,n$, 
\item
$\defcolor{\mathbf{b} \sim^d\mathbf{c}}$ if $\min\{b_i,c_i\}<d$ implies $b_i=c_i$ for $i=0,\ldots,n$, and
\item
$\defcolor{\mathbf{b} \sim_k^d \mathbf{c}}$ if $\mathbf{b} \sim_k \mathbf{c}$ and $\mathbf{b} \sim^d\mathbf{c}$.
\end{itemize}
If $*$ is defined by \eqref{eq:*} then comparing expressions for $t,t'\in\T_n$ of the form \eqref{eq:expansion} implies
\begin{equation}\label{eq:mod-nil-lr}
t\sim_* t' \quad\text{if and only if}\quad \ld(t) \sim_k^d \ld(t') \qand \rd(t) \sim_\ell^e \rd(t')\,.
\end{equation}


More generally, when every equivalent pair of binary trees $t\sim_*t'$ satisfies both $\ld(t) \sim_k^d \ld(t')$ and $\rd(t) \sim_\ell^e \rd(t')$, we say $*$ is \demph{$(k,\ell)$-associative at depth $(d,e)$}.
Note that $(1,1)$-associativity at depth $(1,1)$ is the usual associativity.
We write $\defcolor{C^{d,e}_{k,\ell,n}}:=C_{*,n}$ and $\defcolor{\tC^{d,e}_{k,\ell,n}}:=\tC_{*,n}$ for any binary operation $*$ satisfying \eqref{eq:mod-nil-lr}.

We observe that 
$\mathbf{b}\sim_{k}^d\mathbf{c}$ implies $\mathbf{b}\sim_{k'}^{d'}\mathbf{c}$ if $d\le d'$ and $k\mid k'$.
Thus if $d \le d'$, $e\le e'$, $k \mid k'$, and $\ell \mid \ell'$, then $C^{d,e}_{k,\ell,n}\le C^{d',e'}_{k',\ell',n}$ and $\tC^{d,e}_{k,\ell,n}\ge \tC^{d',e'}_{k',\ell',n}$, and $(d,e)$-associativity at depth $(k,\ell)$ implies $(d',e')$-associativity at depth $(k',\ell')$.

Also, the $(*,n)$-equivalence classes that determine $C_{k,\ell,n}^{d,e}$ and $\tC_{k,\ell,n}^{d,e}$ are the same as the classes that determine $C_{\ell,k,n}^{e,d}$ and $\tC_{\ell,k,n}^{e,d}$, but with each binary tree in each class reflected about a vertical line.
Thus for $d,e,k,\ell\ge1$ we have $C^{d,e}_{k,\ell,n} = C^{e,d}_{\ell,k,n}$ and $\tC^{d,e}_{k,\ell,n} = \tC^{e,d}_{\ell,k,n}$.


We now describe the relationship between $\sim_k^d$ in \eqref{eq:mod-nil-lr} and associativity.
First, note the relation $\sim_1^d$ coincides with $\sim^d$ as all integers are congruent modulo $1$.
Next, observe, since $\ld_i(t)=0 \Leftrightarrow i=n$ and $\rd_i(t)=0 \Leftrightarrow i=0$ for all $t\in\T_n$, we see $\sim_k^1$ coincides with $\sim_k$ on left and right depths of binary trees in $\T_n$.
In earlier work~\cite{CatMod}, we used plane trees, Dyck paths, and Lagrange inversion to determine $\defcolor{C_{k,n}}:=C^{1,1}_{k,1,n}$, which was called a \demph{($k$-)modular Catalan number} as for any binary operation $*$ satisfying \eqref{eq:mod-nil-lr} with $d=e=\ell=1$, the $(*,n)$-relation is the same as the congruence relation modulo $k$ on left depths of binary trees in $\T_n$.
We also determined $\defcolor{\tC_{k,n}} := \tC^{1,1}_{k,1,n}$ and enumerated $(*,n)$-classes with this largest size.
By our earlier result~\cite[Proposition~2.11]{CatMod}, the ``if'' part of \eqref{eq:mod-nil-lr} with $d=e=\ell=1$  is equivalent to \demph{$k$-associativity}, given by the rule $(a_0*\cdots*a_k)*a_{k+1} = a_0*(a_1*\cdots*a_{k+1})$, where the $*$'s in parentheses are evaluated from left to right.
This gives a one-parameter generalization of the usual associativity (i.e., $1$-associativity), which was also studied by Waldhauser~\cite[Theorem~2.3]{AlmostAssociative}.

On the other hand, we will see in Section~\ref{sec:nil-lr} that, for $e=k=\ell=1$, the ``if'' part of the $(*,n)$-relation \eqref{eq:mod-nil-lr} can be viewed as \demph{associativity at left depth $d$}, that is, $t\sim_* t'$ if $t$ can be obtained from $t'$ by a finite sequence of moves, each of which replaces the maximal subtree rooted at a node of left depth at least $d-1$ by another binary tree with the same number of leaves.

\subsection{Summary of the main results}
The two $1$-parameter generalizations of associativity given above justify the terminology ``$(k,\ell)$-associativity at depth $(d,e)$'' for the ``if'' part of \eqref{eq:mod-nil-lr}.
In this paper we focus on two special cases, $k=\ell=1$ and $e=\ell=1$, each giving a two-parameter generalization of the usual associativity with connections to many interesting integer sequences and combinatorial objects.

In Section~\ref{sec:nil-lr} we study the case $k=\ell=1$.
In this case the ``if'' part of \eqref{eq:mod-nil-lr} with $k=\ell=1$ can be viewed as \demph{associativity at left depth $d$ and right depth $e$}, which recovers the associativity at left depth $d$ when $e=1$.
Define $\defcolor{C^{d,e}_n} :=  C^{d,e}_{1,1,n}$ and $\defcolor{\tC^{d,e}_n} := \tC^{d,e}_{1,1,n}$.
We determine ${\tC^{d,e}_n}$ and enumerate $(*,n)$-classes with this size for arbitrary binary operations $*$ satisfying \eqref{eq:mod-nil-lr} with $k=\ell=1$.
We show that the cardinality of each $(*,n)$-class is a product of Catalan numbers in Corollary~\ref{cor:prod-cat}.
We also provide a recursive formula for the generating function $\defcolor{C^{d,e}(x)}$ of ${C^{d,e}_n}$. 
Then we give closed formulas for $C^{d,e}(x)$ and $C^{d,e}_n$ when $e=1,2$.
It turns out that $C^{d,1}(x)$ is a well-known continued fraction, and $\{C^{d,1}_n:d\ge1,n\ge0\}$ coincides with an array in OEIS~\cite[A080934]{OEIS}, which enumerates various families of objects, including
\begin{itemize}
\item
binary trees with $n+1$ leaves of left depth at most $d$ (by~Proposition~\ref{prop:MC}),
\item
plane trees with $n+1$ nodes of depth at most $d$ (de Bruijn, Knuth, and Rice~\cite{DBKR}),
\item
Dyck paths of length $2n$ with height at most $d$ (Flajolet~\cite{Flajolet} and Kreweras~\cite[page 38]{Kreweras}),
\item
permutations in $\SS_n$ avoiding $132$ and $123\cdots(d+1)$  (Krattenthaler~\cite{Krattenthaler} 
, Kitaev, Remmel, and Tiefenbruck~\cite{KRT12}), 
\item
ad-nilpotent ideals of the Borel subalgebra of the Lie algebra $\mathfrak{sl}_n(\mathbb C)$ of order at most $d-1$ (Andrews, Krattenthaler, Orsina, and Papi~\cite{AdNilIdeals}).
\end{itemize}
There are previously known closed formulas for $C^{d,1}_n$~\cite{AdNilIdeals,DBKR}, but our formula (Prop.~\ref{prop:d1}) is apparently different.
The number $C^{d,2}_n$ occurs in OEIS only for $d\le3$~\cite[A045623, A142586]{OEIS};
we find no result on $C^{d,2}$ for $d\ge4$ or $C^{d,3}_n$ for $d\ge 3$ in OEIS~\cite{OEIS}.

In Section~\ref{sec:mod-nil} we study the case $e=\ell=1$.
In this case the ``if'' part of \eqref{eq:mod-nil-lr} can be viewed as \demph{$k$-associativity of left depth $d$}, which recovers the $k$-associativity when $d=1$ and recovers the associativity of left depth $d$ when $k=1$.
We give a few families of combinatorial objects enumerated by $\defcolor{C^d_{k,n}} := C^{d,1}_{k,1,n}$, including binary trees, plane trees, and Dyck paths with certain constraints, and establish a recursive formula for the generating function $\defcolor{C^d_k(x)}$ of $C^d_{k,n}$.
Then we study $C^d_{k,n}$ when $d=1,2,3$ or $k=1,2$.
We have $C_1^d(x)=C^{d,1}(x)$ and $C_2^d(x) = C^{d+1,1}(x)$ for $d,n\ge0$.
The sequence $\{C_{3,n}^d\}$ has been studied by Barcucci, Del Lungo, Pergola, and Pinzani~\cite{CatPerm} in terms of pattern avoidance in permutations; see also~\cite[A005773, A054391--A054394]{OEIS} for $d=1,\ldots,5$.
There is a closed formula for $C_3^d(x)$ but no closed formula for $C_{3,n}^d$ given in~\cite{CatPerm}.
We provide a different formula for $C_3^d(x)$ and derive a closed formula for $C_{3,n}^d$ from it.
We also give closed formulas for the number $C^d_{k,n}$ when $d=2$, using Lagrange inversion and our ealier work on $C^1_{k,n}$~\cite{CatMod}.


Another two-parameter specialization of \eqref{eq:mod-nil-lr} can be obtained by taking $d=e=1$.
Let $\defcolor{C_{k,\ell,n}} := C^{1,1}_{k,\ell,n}$.
Computations suggest a conjecture: $C_{k,\ell,n} = C_{k+\ell-1,n}$ for all $k,\ell\ge1$ and $n\ge0$.  
One could also explore the case $d,e,k,\ell>1$ in the future.

\tableofcontents

\section{Associativity at depth $(d,e)$: the case $k=\ell=1$}\label{sec:nil-lr}

In this section we assume $*$ is a binary operation satisfying \eqref{eq:mod-nil-lr} with $k=\ell=1$ and $d,e\ge 1$.
This means, for $t,t'\in \T_n$, $t\sim_* t$ if and only if both $\ld(t) \sim^d \ld(t')$ and $\rd(t) \sim^{e} \rd(t')$.
We study $(*,n)$-classes and the nonassociativity measurements $\defcolor{C^{d,e}_n} := C_{*,n}$ and $\defcolor{\tC^{d,e}_n} := \tC_{*,n}$ of $*$ arising from these classes.

\subsection{Equivalence classes}
We first introduce some notation.
If a node in a binary tree has left depth $\ld\ge d-1$ and right depth $\rd\ge e-1$ then we say this node is \demph{$(d,e)$-contractible}, or simply \demph{contractible} if $d$ and $e$ are clear from the context.
We call a contractible node \demph{maximal} if its parent is not contractible.
One sees that a node with left depth $\ld$ and right depth $\rd$ is a maximal contractible node if and only if $\ld=d-1$ and $\rd\ge e-1$ when $v$ is the left child of its parent, or $\ld\ge d-1$ and $\rd=e-1$ when $v$ is the right child of its parent.

Let $t\in\T_n$ and assign each leaf weight one. 
For each maximal contractible node $v$, we contract its subtree to a single node and assign $v$ a weight equal to the number of leaves in this subtree.
Denote by $\defcolor{\phi(t)}$ the resulting weighted binary tree.
This gives a map $\phi:\T_n\to\T^{d,e}_n$ by $t\mapsto\phi(t)$, where $\defcolor{\T^{d,e}_n}$ is the set of all leaf-weighted binary trees such that
\begin{itemize}
\item
every contractible leaf is maximal and has a positive integer weight,
\item
every non-contractible leaf has weight one, and
\item 
the sum of leaf weights is $n+1$.
\end{itemize}

Conversely, let $\bar t\in\T^{d,e}_n$ have leaves $v_0,\ldots,v_r$ with weights $m_0,\ldots,m_r$ respectively.
We replace $v_i$ by an arbitrary binary tree $t_i$ with $m_i$ leaves for $i=0,\ldots,r$.
Write $\defcolor{\phi^{-1}(\bar t;t_0,\ldots,t_r)}$ for the resulting tree.

\begin{lemma}\label{lem:phi}
(i) We have a surjection $\phi:\T_n\twoheadrightarrow\T^{d,e}_n$. 

\noindent(ii) For each $\bar t\in\T^{d,e}_n$ whose leaves $v_0,\ldots,v_r$ are weighted $m_0,\ldots,m_r$, its fiber is
\[ \phi^{-1}(\bar t) = \{ \phi^{-1}(\bar t;t_0,\ldots,t_r) : t_i\in \T_{m_i-1}\}. \]

\noindent (iii) We have $t\sim_* t'$ whenever $\phi(t)=\phi(t')$.
\end{lemma}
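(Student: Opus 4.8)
The plan is to first pin down the basic geometry of left and right depths, then treat the three parts in order, with part (iii) being the only one carrying real content.

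Two elementary facts drive everything. (a) For any binary tree $t$, both $\ld_v(t)$ and $\rd_v(t)$ are weakly increasing as $v$ moves down a root-to-node path; hence if $v$ is $(d,e)$-contractible so is every descendant of $v$, the maximal contractible nodes of $t$ form an antichain, and the subtrees they root are pairwise disjoint. (b) $\ld_v(t)$ and $\rd_v(t)$ depend only on the root-to-$v$ path, so they are unchanged when the subtree below $v$ is replaced by another tree. From (a), and since contraction preserves the leaf count, $\phi(t)$ is a well-defined leaf-weighted binary tree whose leaf weights sum to $n+1$; each of its leaves is either a contracted maximal contractible node of $t$ or an uncontracted (hence non-contractible) leaf of $t$, so every contractible leaf of $\phi(t)$ is maximal and every non-contractible one has weight $1$. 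Thus $\phi$ lands in $\T^{d,e}_n$.

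For (i) and (ii): first observe that a tree $\bar t\in\T^{d,e}_n$ has no contractible internal node --- otherwise, descending through children (each contractible by (a)) would reach a contractible leaf with a contractible parent, contradicting membership in $\T^{d,e}_n$ --- so the contractible leaves of $\bar t$ are exactly its maximal contractible nodes. Fix $\bar t$ with leaves $v_0,\dots,v_r$ of weights $m_0,\dots,m_r$ and trees $t_i\in\T_{m_i-1}$, and put $t=\phi^{-1}(\bar t;t_0,\dots,t_r)$. By (b), the nodes of $t$ inherited from internal nodes of $\bar t$ keep their depths and stay non-contractible, each $v_i$ keeps its $\bar t$-depths, and any node strictly inside $t_i$ is a proper descendant of the leaf $v_i$, which is contractible whenever $m_i\ge 2$ (leaves of $\bar t$ of weight $>1$ are contractible by definition of $\T^{d,e}_n$); such a node is therefore contractible but not maximal. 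Hence the maximal contractible nodes of $t$ are precisely the contractible $v_i$, the subtree of $t$ rooted at $v_i$ is $t_i$, and contracting these recovers $\bar t$: this gives $\phi(t)=\bar t$, so $\phi$ is surjective and ``$\supseteq$'' of (ii) holds. Conversely $\phi(t)=\bar t$ forces $t$ to be $\bar t$ with each leaf $v_i$ re-expanded into the $m_i$-leaf subtree $s_i$ of $t$ it was contracted from, i.e.\ $t=\phi^{-1}(\bar t;s_0,\dots,s_r)$ with $s_i\in\T_{m_i-1}$, which is ``$\subseteq$''.

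For (iii), let $\phi(t)=\phi(t')=\bar t$ and, by (ii), write $t=\phi^{-1}(\bar t;t_0,\dots,t_r)$ and $t'=\phi^{-1}(\bar t;t_0',\dots,t_r')$. The partition of the leaf indices $0,\dots,n$ into consecutive blocks of sizes $m_0,\dots,m_r$ depends only on $\bar t$, so a leaf of global index $p$ sits in the same block $j$ with the same local index $q$ in both trees, and $\ld_p(t)=L_j+\ld_q(t_j)$, $\ld_p(t')=L_j+\ld_q(t_j')$ with $L_j=\ld_{v_j}(\bar t)$, and likewise for right depths with $R_j=\rd_{v_j}(\bar t)$. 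The crucial remark is that $\ld_q(t_j)=0$ iff $q$ is the last leaf of block $j$ and $\rd_q(t_j)=0$ iff $q$ is its first leaf, regardless of which tree $t_j$ is. So if $q$ is the last leaf of block $j$ then $\ld_p(t)=L_j=\ld_p(t')$; otherwise $m_j\ge 2$, so $v_j$ is contractible and $L_j\ge d-1$ while $\ld_q(t_j),\ld_q(t_j')\ge 1$, whence $\ld_p(t),\ld_p(t')\ge d$. Either way the defining clause of $\sim^d$ holds at $p$, so $\ld(t)\sim^d\ld(t')$; symmetrically (replace ``last'' by ``first'', $L_j$ by $R_j\ge e-1$, and $d$ by $e$) $\rd(t)\sim^e\rd(t')$, and \eqref{eq:mod-nil-lr} with $k=\ell=1$ gives $t\sim_* t'$. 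The one delicate point, and the main thing to get right, is exactly this block analysis: the only leaves of a contracted block whose left (resp.\ right) depth can fall below $d$ (resp.\ $e$) are the extreme ones, whose depths are then pinned to those of the block's root in $\bar t$ and hence agree for $t$ and $t'$; all the rest is additivity and monotonicity of depths from the opening step.
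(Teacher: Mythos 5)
Your proof is correct and follows essentially the same route as the paper's: the heart in both cases is the observation that inside a subtree replacing a contractible leaf, only the first and last leaves can have right (resp.\ left) depth below the threshold, and those depths are pinned to the contracted node's depths, so the $\sim^d$ and $\sim^e$ conditions hold. The only cosmetic difference is that you compare all blocks simultaneously and spell out (i)--(ii) in detail, whereas the paper changes one subtree at a time via transitivity of $\sim_*$ and dispatches (i)--(ii) briefly.
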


\begin{proof}
We first prove the third statements, and the others quickly follow.
Let $\bar t\in\T^{d,e}_n$ with leaves $v_0,\ldots,v_r$ having weight $m_0,\ldots,m_r$.
As $\sim_*$ is an equivalence relation, so, to prove (iii) it suffices to show $t=\phi^{-1}(\bar t;t_0,\ldots,t_r) \sim_* t'=\phi^{-1}(\bar t;t'_0,\ldots,t'_r)$, where $t_j$ are $t'_j$ are distinct binary trees with $m_j$ leaves for some $j\in\{0,\ldots,r\}$ and $t_i$ and $t'_i$ are the same binary tree with $m_i$ leaves for all $i\ne j$.
Suppose the $j$th leaf $v_j$ has left depth $\ld$ and right depth $\rd$.
Since $v_j$ is contractible by the definition of $\T^{d,e}_n$, we have $\ld\ge d-1$ and $\rd\ge e-1$.
We also know $m_j\ge 3$ as $t_j\ne t'_j$, so in $t$ we have:
\begin{itemize}
\item
the first leaf of $t_j$ has left depth at least $\ld+1\ge d$ and right depth equal to $\rho$, 
\item
the last leaf of $t_j$ has left depth equal to $\ld$ and right depth at least $\rd+1\ge e$, and  \item
all other leaves of $t_j$ have left depths at least $\ld+1\ge d$ and right depths at least $\rd+1\ge e$.
\end{itemize}
Similarly, the leaves of $t'_j$ satisfy the same properties.
Thus $\ld(t) \sim^d \ld(t')$ and $\rd(t) \sim^{e} \rd(t')$.
This proves (iii).

The argument above implies that each descendant of a contractible node is contractible, so (ii) also holds.
Furthermore, each fiber $\phi^{-1}(\bar t)$ is nonempty as one can choose trees $t_i$ with $m_i$ leaves for each $i$.
This implies the surjectivity of $\phi$.
\end{proof}

\begin{theorem}\label{thm:nil-lr}
Let $*$ be a binary operation satisfying \eqref{eq:mod-nil-lr} with $k=\ell=1$.
Then the fibers of $\phi$ are precisely the $(*,n)$-classes.
\end{theorem}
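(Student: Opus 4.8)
The plan is to show that the partition of $\T_n$ into fibers of $\phi$ is exactly the partition into $(*,n)$-classes. Lemma~\ref{lem:phi}(iii) already gives one containment: each fiber $\phi^{-1}(\bar t)$ lies inside a single $(*,n)$-class, since $\phi(t)=\phi(t')$ implies $t\sim_* t'$. So the entire content is the reverse: if $t\sim_* t'$ then $\phi(t)=\phi(t')$. Because $*$ satisfies \eqref{eq:mod-nil-lr} with $k=\ell=1$, this amounts to proving the implication
\[
\ld(t)\sim^d\ld(t') \text{ and } \rd(t)\sim^e\rd(t') \implies \phi(t)=\phi(t').
\]

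The key step is to recover the shape of $\phi(t)$ — i.e., the underlying leaf-weighted binary tree — from the pair of depth vectors $(\ld(t),\rd(t))$ in a way that manifestly depends only on the equivalence classes under $\sim^d$ and $\sim^e$. First I would recall from \cite[\S2.1]{CatMod} that $t$ is reconstructed from $\ld(t)$ alone (and symmetrically from $\rd(t)$), so $(\ld(t),\rd(t))$ certainly determines $t$; the point is to describe $\phi(t)$ using only the $\sim^d$-class of $\ld(t)$ and the $\sim^e$-class of $\rd(t)$. Here is the mechanism. A leaf $i$ of $t$ becomes part of a contracted subtree precisely when it lies below some maximal contractible node; by the proof of Lemma~\ref{lem:phi}, the leaves below a contractible node $v$ of left depth $\ld$ and right depth $\rd$ are exactly those leaves $i$ whose path passes through $v$, and such a leaf has $\ld_i(t)\ge \ld\ge d-1$ together with $\rd_i(t)\ge \rd\ge e-1$, and in fact $\ld_i(t)\ge d$ or $\rd_i(t)\ge e$ for every such leaf except possibly the first and last. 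So I want to argue: the non-contractible leaves of $t$ — equivalently the leaves of $\phi(t)$ of weight one — together with the structural positions of the maximal contractible nodes are detectable from knowing, for each leaf $i$, whether $\ld_i(t)<d$, $\ld_i(t)=d-1$, etc., and whether $\rd_i(t)<e$, $\rd_i(t)=e-1$ — and all of this data is preserved by $\sim^d$ and $\sim^e$, because those relations fix every coordinate that is below the threshold ($<d$ resp. $<e$) and fix whether a coordinate equals $d-1$ resp. $e-1$.

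Concretely I would proceed as follows. Given $t$, let $U(t)\subseteq\{0,\dots,n\}$ be the set of leaves $i$ with $\ld_i(t)\le d-1$ and $\rd_i(t)\le e-1$ except that I also want the ``boundary'' leaves; more cleanly, I would instead directly identify the maximal contractible nodes of $t$ from the depth vectors. A node $v$ is the deepest common ancestor of some interval of consecutive leaves $[a,b]$ in preorder (this is how internal nodes of a binary tree correspond to intervals), and its left depth is $\ld_a(t)$ — the left depth of the first leaf in its block that is not shared with the left sibling — while its right depth is $\rd_b(t)$. So ``$v$ is contractible'' translates into an inequality $\ld_a(t)\ge d-1$, $\rd_b(t)\ge e-1$, and ``$v$ is maximal contractible'' adds that the parent fails one of these — which, as quoted in the paragraph before Lemma~\ref{lem:phi}, means $\ld_a(t)=d-1$ (if $v$ is a left child) or $\rd_b(t)=e-1$ (if $v$ is a right child). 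All these conditions involve only whether a left-depth coordinate is $\ge d-1$ or $=d-1$ and whether a right-depth coordinate is $\ge e-1$ or $=e-1$, hence are invariant under replacing $\ld(t)$ by any $\sim^d$-equivalent vector and $\rd(t)$ by any $\sim^e$-equivalent vector. The hard part — and the step I expect to need the most care — is making precise the claim that the combinatorial data ``which preorder intervals are maximal contractible blocks, plus the weight (= block size) of each'' determines the leaf-weighted tree $\phi(t)$ uniquely, and then showing that when $t\sim_* t'$ this data coincides for $t$ and $t'$. For the determination, I would use the injectivity of $t\mapsto\ld(t)$: outside the contracted blocks, the restricted left-depth vector of $\phi(t)$ (reading block-leaves once with their weights) is obtained from $\ld(t)$ by deleting the interior coordinates of each block, and this truncated vector still injects; combined with the same statement for right depths one pins down $\phi(t)$. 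For the coincidence under $t\sim_* t'$: $\sim^d$ forces $\ld_i(t)=\ld_i(t')$ on every coordinate below $d$, in particular it forces the two trees to have the same set of leaves with $\ld_i<d-1$ and to agree there, and it forces agreement of the indicator ``$\ld_i=d-1$''; symmetrically for right depths; feeding this into the interval description above shows $t$ and $t'$ have identical maximal-contractible-block structure and identical block sizes, hence $\phi(t)=\phi(t')$. That finishes the containment, and with Lemma~\ref{lem:phi}(iii) the two partitions coincide, proving the theorem.
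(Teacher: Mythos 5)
Your reduction is right: Lemma~\ref{lem:phi}(iii) gives that each fiber of $\phi$ lies in a single class, and the whole content is the converse, $\ld(t)\sim^d\ld(t')$ and $\rd(t)\sim^e\rd(t')$ imply $\phi(t)=\phi(t')$. But the step you yourself flag as ``the hard part'' is exactly where the proof is missing, and what you offer in its place does not close it. You propose to read off the maximal contractible blocks of $t$ as preorder intervals $[a,b]$ whose endpoint depths satisfy threshold conditions, and then to say that since those conditions only involve ``$\ge d-1$'', ``$=d-1$'', ``$\ge e-1$'', ``$=e-1$'', which are $\sim^d$/$\sim^e$-invariant, the two trees must have the same blocks. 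The flaw is that the conditions are applied to intervals that are \emph{node-intervals of the tree in question}, together with the information of whether that node is a left or a right child; this auxiliary structure is not itself invariant, and a priori nothing prevents $t$ and $t'$ from partitioning the same set of contractible leaves into blocks with different boundaries (note that a block's first leaf can have $\rd\ge e$ and its last leaf can have $\ld\ge d$, so the truncated per-leaf data does not by itself mark where one block ends and the next begins). Ruling this out is precisely the theorem, and it needs an argument. The paper supplies one: take two trees over distinct weighted trees $\bar s\ne\bar t$, look at the first leaf position $j$ where the weights differ, and derive a contradiction by comparing left depths of the last leaf of the $j$th block and then right depths of the \emph{next} leaf after it (the leaf $u_{j+1}$ versus the $(m_j+1)$th leaf of $t_j$), using that $u_j$ is forced to be a right child. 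No comparison of this cross-boundary kind appears in your sketch; ``feeding this into the interval description'' is an assertion of the conclusion, not a proof.

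There is also a concrete error in the interval description itself: for an internal node $v$ with leaf interval $[a,b]$, the leftmost leaf $a$ is reached from $v$ by left steps only, so $\rd_a(t)=\rd(v)$ and $\ld_a(t)>\ld(v)$, and symmetrically $\ld_b(t)=\ld(v)$, $\rd_b(t)>\rd(v)$. Hence $\ld(v)=\ld_b(t)$ and $\rd(v)=\rd_a(t)$, the opposite of what you wrote, and your maximality criteria ``$\ld_a=d-1$ if $v$ is a left child, $\rd_b=e-1$ if a right child'' should read $\ld_b=d-1$, respectively $\rd_a=e-1$. This slip is fixable, but the missing block-rigidity argument above is a genuine gap: either supply a contradiction argument along the paper's lines (compare the first position where the proposed block structures disagree, and use the depth of the leaf immediately following that block), or give an explicit, tree-free characterization of the block decomposition in terms of the truncated sequences and prove it is well defined.
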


\begin{proof}
Let $\bar s\in\T^{d,e}_n$ whose leaves $u_0,\ldots,u_a$ are weighted $m_0,\ldots,m_a$.
Let $\bar t\in\T^{d,e}_n$ whose leaves $v_0,\ldots,v_b$ are weighted $n_0,\ldots,n_b$.
Assume $\bar s$ and $\bar t$ are distinct.
Let $s=\phi^{-1}(\bar s; s_1,\ldots,s_a)\in \phi^{-1}(\bar s)$ and $t=\phi^{-1}(\bar t; t_1,\ldots,t_b)\in\phi^{-1}(\bar t)$. 
Assume for a contradiction that $s\sim_* t$.

Let $j$ be the smallest integer such that $m_j\ne n_j$, say $m_j<n_j$.
Then $m_i=n_i$ for all $i<j$.
Moreover, $n_j>1$ implies that $v_j$ is a maximal contractible node, i.e., $\ld(v_j)\ge d-1$, $\rd(v_j)\ge e-1$, and equality holds in at least one of these two inequalities.

The last leaf of $s_j$ has left depth equal to $\ld(u_j)$ in $s$ and the $m_j$th leaf of $t_j$ has left depth at least $\ld(v_j)+1\ge d$ in $t$.
Then $s\sim_*t$ implies $\ld(u_j)\ge d$. 
Since the parent of $u_j$ is not contractible, we have $\rd(u_j)<e-1$ if $u_j$ is a left child or $\rd(u_j)<e$ if $u_j$ is a right child.

One also sees that the first leaf of $s_j$ has right depth equal to $\rd(u_j)$ in $s$ and the first leaf of $t_j$ has right depth equal to $\rd(v_j)$ in $t$.
Combining this with $\rd(u_j)<e$ and $\rd(v_j)\ge e-1$ we have $\rd(u_j)=\rd(v_j)=e-1$.
Thus $u_j$ must be a right child.
This implies that $u_{j+1}$ has right depth at most $\rd(u_j)=e-1$ in $\bar s$.
Then the first leaf of $s_{j+1}$ has right depth at most $e-1$ in $s$.
On the other hand, the $(m_j+1)$th leaf of $t_j$ has right depth at least $\rd(v_j)+1=e$ in $t$.
This gives a contradiction.
\end{proof}

\begin{corollary}\label{cor:prod-cat}
The cardinality of each $(*,n)$-class is a product of Catalan numbers $C_{m_0-1}\cdots C_{m_r-1}$ with $m_0+\cdots+m_r = n+1$.
\end{corollary}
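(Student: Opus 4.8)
The plan is to read off the corollary directly from Theorem~\ref{thm:nil-lr} together with part~(ii) of Lemma~\ref{lem:phi}. Theorem~\ref{thm:nil-lr} says the $(*,n)$-classes are exactly the fibers $\phi^{-1}(\bar t)$ for $\bar t \in \T^{d,e}_n$. So it suffices to count $|\phi^{-1}(\bar t)|$ for a fixed $\bar t$ whose leaves $v_0,\ldots,v_r$ carry weights $m_0,\ldots,m_r$.

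First I would invoke Lemma~\ref{lem:phi}(ii), which describes the fiber as $\phi^{-1}(\bar t) = \{\phi^{-1}(\bar t; t_0,\ldots,t_r) : t_i \in \T_{m_i-1}\}$. The reconstruction map $(t_0,\ldots,t_r) \mapsto \phi^{-1}(\bar t; t_0,\ldots,t_r)$ is injective: the tree $\phi^{-1}(\bar t;t_0,\ldots,t_r)$ contains $\bar t$ as the quotient obtained by contracting the maximal contractible subtrees, and each $t_i$ is recovered as the subtree that was substituted at leaf $v_i$, so distinct tuples give distinct trees. Hence $|\phi^{-1}(\bar t)| = \prod_{i=0}^r |\T_{m_i-1}| = \prod_{i=0}^r C_{m_i-1}$, since $\T_{m}$ is enumerated by the Catalan number $C_m$ (as recalled in the introduction). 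The constraint $m_0 + \cdots + m_r = n+1$ is exactly the condition that the leaf weights of $\bar t \in \T^{d,e}_n$ sum to $n+1$, from the definition of $\T^{d,e}_n$.

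The one point worth a sentence of care, and the only mild obstacle, is the injectivity of the reconstruction map and the implicit claim that $\phi$ restricted to a fiber is a bijection onto its set of substitution data; but this is essentially immediate from the construction of $\phi$ and $\phi^{-1}$ and the observation (made in the proof of Lemma~\ref{lem:phi}) that every descendant of a contractible node is contractible, so the maximal contractible subtrees of $\phi^{-1}(\bar t;t_0,\ldots,t_r)$ are precisely the substituted trees $t_i$. Everything else is a direct product count, so the proof is short.
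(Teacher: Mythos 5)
Your proposal is correct and follows essentially the same route as the paper: identify the $(*,n)$-classes with the fibers $\phi^{-1}(\bar t)$ via Theorem~\ref{thm:nil-lr}, then count each fiber as $\prod_i C_{m_i-1}$ using Lemma~\ref{lem:phi}(ii) and the weight-sum condition from the definition of $\T^{d,e}_n$. Your extra remark on the injectivity of the substitution map $(t_0,\ldots,t_r)\mapsto\phi^{-1}(\bar t;t_0,\ldots,t_r)$ is a reasonable point of care that the paper leaves implicit, but it does not change the argument.
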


\begin{proof}
By Theorem~\ref{thm:nil-lr}, every $(*,n)$-class can be written as $\phi^{-1}(\bar t)$ for some $\bar t\in\T^{d,e}_n$.
If the leaves of $\bar t$ have weights $m_0,\ldots,m_r$ then $|\phi^{-1}(\bar t)| = C_{m_0-1}\cdots C_{m_r-1}$ by Lemma~\ref{lem:phi}. 
We have $m_0+\cdots+m_r = n+1$ by the definition of $\T^{d,e}_n$.
\end{proof}

\subsection{Nonassociativity measurements}

We first determine the largest size  $\tC^{d,e}_n$ of a $(*,n)$-class.

\begin{theorem}
Let $d,e\ge1$.
If $0\le n<d+e$ then $\tC^{d,e}_n=1$.
If $n\ge d+e$ then $\tC^{d,e}_n=C_{n+2-d-e}$ and the number of $(*,n)$-classes with this size is ${d+e-2\choose d-1}$.
\end{theorem}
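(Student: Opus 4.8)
The plan is to use the bijection $\phi:\T_n\to\T^{d,e}_n$ from Theorem~\ref{thm:nil-lr} and translate both questions---the maximal class size and the number of classes attaining it---into purely combinatorial questions about the shapes $\bar t\in\T^{d,e}_n$. By Corollary~\ref{cor:prod-cat}, a class $\phi^{-1}(\bar t)$ has size $C_{m_0-1}\cdots C_{m_r-1}$ where $m_0,\ldots,m_r$ are the leaf weights of $\bar t$ and $\sum m_i=n+1$. Since $C_a C_b < C_{a+b}$ whenever $a,b\ge1$ (a standard strict superadditivity of Catalan numbers, easily seen from $C_{a+b}$ counting binary trees on $a+b+1$ leaves versus the subset whose structure splits at a prescribed leaf), the product $\prod C_{m_i-1}$ is maximized, for a fixed multiset of weights summing to $n+1$, by putting as much weight as possible into a single leaf---i.e.\ by having exactly one leaf of large weight $m$ and all other leaves of weight $1$. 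So the first step is: identify the largest weight $m$ a single contractible leaf of a tree in $\T^{d,e}_n$ can carry while all remaining leaves have weight $1$, and check that such a configuration is actually realizable.

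For the realizability and the value of $m$, I would argue as follows. A contractible leaf must sit at left depth $\ge d-1$ and right depth $\ge e-1$; the shallowest place to put one is at left depth $d-1$, right depth $e-1$. To reach such a node in a binary tree one needs a path from the root using $d-1$ left steps and $e-1$ right steps, and the minimal number of leaves in a binary tree that merely provides such a path, with every other leaf having weight $1$, is $d+e-1$ (the path has $d+e-1$ internal edges below the root... more carefully: a binary tree with a distinguished node at left-depth $d-1$ and right-depth $e-1$ has at least $d+e-1$ leaves, with equality when the tree is exactly the ``staircase'' to that node). Hence if $n+1 < d+e$, i.e.\ $n<d+e-1$, no nontrivial contractible leaf exists, every class is a singleton, and $\tC^{d,e}_n=1$; this handles $n<d+e$ except the boundary case $n=d+e-1$, where the only contractible leaf possible still has weight $1$ (the tree is forced to be the staircase), so again $\tC^{d,e}_n=1$. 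When $n\ge d+e$, we can take the staircase tree using $d+e-2$ unit-weight leaves plus one contractible leaf, and assign that leaf the remaining weight $m = (n+1)-(d+e-2) = n+3-d-e$; wait---recheck the count: a staircase path with $d-1$ left steps then $e-1$ right steps has $d+e-2$ internal nodes, hence $d+e-1$ leaves, one of which is the contractible terminal leaf, leaving $d+e-2$ unit leaves besides it. So the contractible leaf gets weight $(n+1)-(d+e-2) = n+3-d-e$, giving class size $C_{n+2-d-e}$, matching the claim. I should double-check this indexing against the $n=d+e$ base case: then the weight is $3$, class size $C_2=2>1$, consistent with $\tC^{d,e}_{d+e}=C_2$.

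For the enumeration of classes achieving this maximum, the key observation is that a maximal class corresponds exactly to a tree $\bar t\in\T^{d,e}_n$ having one contractible leaf of weight $n+3-d-e\ge 3$ and all other leaves of weight $1$; and since that big leaf must be a maximal contractible node (its parent non-contractible), the analysis before Lemma~\ref{lem:phi} forces it to have left depth exactly $d-1$ or right depth exactly $e-1$ depending on whether it is a left or right child, while all $d+e-2$ other leaves have weight $1$, which---once we know there is a unique leaf of weight $>1$---forces the rest of the tree to be precisely a staircase to that leaf using $d-1$ left steps and $e-1$ right steps in some order. Thus the maximal classes are in bijection with the ways to interleave $d-1$ left steps and $e-1$ right steps along the root-to-contractible-leaf path, and there are $\binom{d+e-2}{d-1}$ of these. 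The main obstacle is to make rigorous the claim that such a staircase-plus-big-leaf tree is the \emph{only} shape in $\T^{d,e}_n$ whose weight product equals $C_{n+2-d-e}$---i.e.\ to rule out other weight multisets (e.g.\ two leaves of weight $2$ when $n+3-d-e=4$, giving $C_1C_1=1<C_2$, which is fine, but I must verify strict superadditivity is strict enough in all cases, including $C_1=C_0=1$ so weights $1$ and $2$ contribute nothing) and to verify that no non-staircase tree can host a single heavy leaf at the required shallow depth without spawning extra unit leaves that would reduce the available weight. Both points reduce to the strict inequality $C_aC_b<C_{a+b}$ for $a,b\ge 1$ together with the minimality of the staircase; I expect the verification to be short once the bijection and the superadditivity lemma are in place.
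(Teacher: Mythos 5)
Your proposal is correct and follows essentially the same route as the paper: reduce via $\phi$ and Corollary~\ref{cor:prod-cat} to maximizing a product of Catalan numbers over leaf weights, note that any leaf of weight greater than one is contractible and hence forces a root-to-leaf path with $d-1$ left and $e-1$ right steps (so at least $d+e-2$ further unit leaves), and count the maximizing configurations as staircase trees, giving $\binom{d+e-2}{d-1}$. The only differences are cosmetic: you invoke strict superadditivity $C_aC_b<C_{a+b}$ where the paper uses log-convexity of $(C_n)$ (both standard and both suffice), and your parenthetical in the boundary case $n=d+e-1$ is slightly off --- a contractible leaf of weight $2$ is possible there, not only weight $1$ --- but the conclusion $\tC^{d,e}_{d+e-1}=1$ is unaffected since $C_1=1$.
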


\begin{proof}
It is well known that the Catalan sequence $\{C_n\}$ is \demph{log-convex}, i.e., $C_m C_n\le C_{m+1}C_{n-1}$ for $m\ge n\ge1$.
Hence for each $r\ge 0$ the largest result from products of the form $C_{m_0-1}\cdots C_{m_r-1}$ with $m_0+\ldots+m_r=n+1$ is $C_{n-r}$, which is attained when all but one of $m_0,\ldots,m_r$ equal one.

Now let $\bar t\in\T^{d,e}_n$ with leaves $v_0,\ldots,v_r$ weighted $m_0,\ldots,m_r$.
If $m_0=\cdots=m_r=1$ then $C_{m_0-1}\cdots C_{m_r-1}=1$.
Assume $m_i>1$ for some $i$. 
Then $v_i$ is contractible, i.e. $\ld(v_i)\ge d-1$ and $\rd(v_i)\ge e-1$.
Hence the unique path from the root of $\bar t$ to $v_i$ has length at least $d+e-2$.
This implies that $\bar t$ has at least $d+e-2$ internal nodes and thus $r\ge d+e-2$. 
It follows that $|\phi^{-1}(\bar t)|\le C_{n+2-d-e}$, in which equality holds only if $r=d+e-2$ and all but one of $m_0,\ldots,m_r$ equal one.
We have $n+1=m_0+\cdots+m_r\ge r+2 \ge d+e$, i.e., $n\ge d+e-1$.
If $n=d+e-1$ then $C_{n+2-d-e}=C_1=1$.
Thus we assume $n\ge d+e$ below.

It remains to show that there are precisely ${d+e-2\choose d-1}$ many trees in $\T^{d,e}_n$ with $d+e-1$ leaves, one having weight larger than one and all others having weight one.
Suppose that $\bar t$ is such a tree and let $v$ be its unique leaf with weight larger than one.
Then the unique path from the root of $\bar t$ down to $v$ has length $d+e-2$.
This path has $d-1$ left steps and $e-1$ right steps.
Thus there are precisely ${d+e-2\choose d-1}$ many choices for this path.
The entire tree $\bar t$ is determined by this path as all nodes not on this path must be leaves.
\end{proof}

Now we study the other nonassociativity measurement $C^{d,e}_n$ and the generating function
\[ \defcolor{C^{d,e}(x)} := \sum_{n\ge0} C^{d,e}_n x^{n+1}.\]
By symmetry we have $C^{d,e}(x) = C^{e,d}(x)$.
If $d$ or $e$ is zero then we treat it as one.
Theorem~\ref{thm:nil-lr} implies a recurrence relation for $C^{d,e}(x)$.

\begin{proposition}\label{prop:nil-lr-recurrence}
For $d,e\ge1$ we have 
\[ C^{d,e}(x) = x + C^{d-1,e}(x)C^{d,e-1}(x) \]
\end{proposition}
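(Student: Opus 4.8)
The plan is to prove the recurrence by classifying the trees $\bar t\in\T^{d,e}_n$ according to the structure near the root, and then translating the classification into the generating function identity. Since Theorem~\ref{thm:nil-lr} identifies $C^{d,e}_n$ with $|\T^{d,e}_n|$, it suffices to establish a bijective decomposition of $\T^{d,e}_n$.

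First I would dispose of the $n=0$ case: the unique tree in $\T^{d,e}_0$ is a single leaf of weight one, contributing the $x$ term. For $n\ge1$, the tree $\bar t$ has a root with a left child and a right child. The key observation is that the root of $\bar t$ has left depth $0$ and right depth $0$, so when $d\ge1$ and $e\ge1$ the root itself is never contractible (it would need $\ld\ge d-1$ \emph{and} $\rd\ge e-1$ to even be a candidate, but more to the point it has no parent, and its children are what matter). The left subtree $\bar t_L$ of the root, viewed on its own, has every node's left depth decreased by $1$ relative to $\bar t$: a node at position giving $\ld$ within $\bar t_L$ sits at left depth $\ld+1$ in $\bar t$. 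Hence a leaf is contractible-and-maximal in $\bar t$ with respect to the pair $(d,e)$ exactly when it is contractible-and-maximal in $\bar t_L$ with respect to $(d-1,e)$; similarly the right subtree $\bar t_R$ is governed by $(d,e-1)$. One checks that $\bar t\in\T^{d,e}_n$ if and only if $\bar t_L\in\T^{d-1,e}_{p}$ and $\bar t_R\in\T^{d,e-1}_{q}$ for the appropriate leaf-weight sums $p+1$ and $q+1$ with $(p+1)+(q+1)=n+1$, using the convention that a parameter equal to $0$ is read as $1$ (which is exactly the convention already adopted in the text, and matches the base case where e.g. $\T^{0,e}_m$ should just be $\T^{1,e}_m$ since left depth $\ge 0$ is automatic).

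This decomposition $\bar t\leftrightarrow(\bar t_L,\bar t_R)$ is a bijection $\T^{d,e}_n\setminus\{\text{leaf}\}\;\xrightarrow{\sim}\;\bigsqcup_{p+q=n-1}\T^{d-1,e}_p\times\T^{d,e-1}_q$. Passing to generating functions, where each factor $x^{(\#\text{leaves})}=x^{(\#\text{original leaves})}$ is preserved because contraction does not change the total leaf weight, the disjoint union over $p+q=n-1$ becomes a product and we get $C^{d,e}(x)-x=C^{d-1,e}(x)\cdot C^{d,e-1}(x)$, which is the claim. (The shift in indices $n+1=(p+1)+(q+1)$ is precisely what makes the exponents multiply correctly, since $C^{d,e}(x)=\sum_n C^{d,e}_n x^{n+1}$ records $x$ to the power of the number of leaves.)

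The only genuinely delicate point — the one I would write out most carefully — is the claim that the contractibility/maximality conditions defining $\T^{d,e}_n$ localize perfectly to the two root-subtrees with shifted parameters, including the boundary behavior when $d-1=0$ or $e-1=0$. The subtlety is the "maximal" clause: a maximal contractible leaf of $\bar t$ is one whose parent is not contractible, and one must verify that "parent not contractible in $\bar t$" for a node inside $\bar t_L$ is equivalent to "parent not contractible in $\bar t_L$ with parameters $(d-1,e)$" — which it is, again by the uniform $\pm1$ shift in left depth — \emph{except} possibly for a leaf whose parent is the root of $\bar t$. But the root is never contractible (for $d,e\ge1$), and correspondingly in $\bar t_L$ such a leaf is the root of $\bar t_L$, which has no parent, so in both settings the "parent not contractible" condition is vacuously satisfied. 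Thus the leaf conditions match up, and no tree is lost or double-counted. Everything else is bookkeeping.
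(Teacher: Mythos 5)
Your argument is the same as the paper's: identify $C^{d,e}_n$ with $|\T^{d,e}_n|$ via Theorem~\ref{thm:nil-lr}, and decompose each $\bar t\in\T^{d,e}_n$ with $n\ge1$ into the weighted subtrees at the two children of the root, which lie in $\T^{d-1,e}_p$ and $\T^{d,e-1}_q$ because left (resp.\ right) depths drop by one when measured inside the left (resp.\ right) subtree. You spell out the depth-shift and the maximality bookkeeping that the paper leaves implicit; that part is fine and is the only content of the proof.

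One caveat, which you share with the paper: the assertion that the root is never contractible for $d,e\ge1$ fails when $d=e=1$, since then every node satisfies $\ld\ge0$ and $\rd\ge0$. In that case $\T^{1,1}_n$ is a single weighted leaf for every $n$, there are no root subtrees to decompose, and the recurrence as stated is actually false under the paper's convention $C^{0,e}(x)=C^{1,e}(x)$: one has $C^{1,1}(x)=x/(1-x)$, whereas $x+C^{1,1}(x)^2$ already disagrees in the coefficient of $x^3$ ($2$ versus $1$). So the identity should be read as holding for $(d,e)\ne(1,1)$, or with a different boundary convention such as the one used in Corollary~\ref{cor:nil}, where $C^0(x):=x$. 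This is a defect of the statement (and of the paper's own proof) rather than of your decomposition, but since you explicitly assert the false step, it is worth flagging.
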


\begin{proof}
By Theorem~\ref{thm:nil-lr}, $C^{d,e}_n=|\T^{d,e}_n|$.
For any $\bar t\in\T^{d,e}_n$, let $\bar t_L$ and $\bar t_R$ denote the (maximal) weighted subtrees rooted at the left and right children of the root of $\bar t$.
Then $\bar t_L\in\T^{d-1,e}_m$ and $\bar t_R\in\T^{d,e-1}_{n-m-1}$ for some $m$.
Two trees $\bar s$ and $\bar t$ in $\T^{d,e}_n$ are equal if and only if $\bar s_L=\bar t_L$ and $\bar s_R = \bar t_R$.
The result follows.
\end{proof}

Proposition~\ref{prop:nil-lr-recurrence} allows us to compute $C^{d,e}(x)$ recursively from $C^{d,1}(x)$ and $C^{1,e}(x)$.
By the symmetry between $d$ and $e$, we just need to determine $C^{d,1}(x)$.
We will achieve that in the next subsection.

\subsection{Associativity of left depth $d$: The case $e=k=\ell=1$}\label{sec:nil}
We apply our previous results to the case $e=k=\ell=1$.
In this case the $(*,n)$-relation \eqref{eq:mod-nil-lr} can be regarded as \demph{associativity at left depth $d$}, since Theorem~\ref{thm:nil-lr} implies that in this case two trees $t,t'\in\T_n$ satisfy $t\sim_* t'$ if and only if $t$ can be obtained from $t'$ by a finite sequence of moves, each of which replaces the maximal subtree rooted at a node of left depth at least $d-1$ by another binary tree containing the same number of leaves.

Using Proposition~\ref{prop:nil-lr-recurrence} we can determine the number $\defcolor{C^d_n} := C^{d,1}_n$ of $(*,n)$-classes and the generating function $\defcolor{C^d(x)} := C^{d,1}(x)$.
To state a formula for $C^d(x)$, we need the \demph{Fibonacci polynomials} defined by $F_n(x) := F_{n-1}(x) - x F_{n-2}(x)$ for $n\ge2$ with $F_0(x):=0$ and $F_1(x):=1$.
For example, we have $F_2(x) = 1$, $F_3(x) = 1-x$, $F_4(x) = 1-2x$, $F_5(x) = 1-3x+x^2$, $F_6(x) = 1-4x+3x^2$, $F_7(x) = 1-5x+6x^2-x^3$, and so on.
For $n\ge1$ we have \cite[(8), (9), (10)]{DBKR}
\begin{align*}
F_n(x) &= \frac{1}{\sqrt{1-4x}} \left( \left(\frac{1+\sqrt{1-4x}}{2}\right)^n - \left( \frac{1-\sqrt{1-4x}}{2} \right)^n \right) \\
&= \sum_{0\le i\le(n-1)/2} {n-1-i\choose i}(-x)^i 
= \prod_{1\le j\le (n-1)/2} (1-4x\cos^2(j\pi/n)). 
\end{align*} 
Using the \demph{Chebyshev polynomial} $U_n(x) := 2xU_{n-1}(x)-U_{n-2}(x)$ with $U_0(x)=1$ and $U_1(x)=2x$, one has $\displaystyle F_{n+1}(x) = x^{\frac n2} U_{n}\left(\frac{1}{2\sqrt{x}} \right)$ for $n\ge0$.

\begin{corollary}[Kreweras~\cite{Kreweras}]\label{cor:nil}
For $d\ge1$ we have (with $C^0(x):=x$)
\[ C^{d}(x) = \frac{x}{1-C^{d-1}(x)}= \frac{ xF_{d+1}(x) }{ F_{d+2}(x)} .\]
\end{corollary}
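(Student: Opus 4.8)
The plan is to establish the two claimed identities separately: first the recursive one $C^d(x) = x/(1-C^{d-1}(x))$, then the closed form $C^d(x) = xF_{d+1}(x)/F_{d+2}(x)$ in terms of Fibonacci polynomials, with the latter following from the former by induction.

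First I would specialize Proposition~\ref{prop:nil-lr-recurrence} to $e=1$. Since $C^{d,0}(x)$ is treated as $C^{d,1}(x) = C^d(x)$ (the convention ``if $d$ or $e$ is zero then we treat it as one''), the recurrence $C^{d,e}(x) = x + C^{d-1,e}(x)C^{d,e-1}(x)$ with $e=1$ becomes $C^d(x) = x + C^{d-1}(x)C^d(x)$. Solving this linear equation for $C^d(x)$ gives $C^d(x)(1 - C^{d-1}(x)) = x$, hence $C^d(x) = x/(1-C^{d-1}(x))$, which is the first asserted formula. I should check the base case $d=1$: here $C^0(x):=x$, so the formula predicts $C^1(x) = x/(1-x)$, matching the fact that under associativity at left depth $1$ — i.e., ordinary associativity — every class is a singleton, so $C^1_n = 1$ and $C^1(x) = \sum_{n\ge0} x^{n+1} = x/(1-x)$. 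One subtlety worth a remark: the convention $C^0(x) = x$ (rather than $C^{0,1}(x)$, which would be $x/(1-x)$ again) must be the one that makes Proposition~\ref{prop:nil-lr-recurrence} valid at $d=1$; indeed $C^1(x) = x + C^0(x)C^1(x)$ with $C^0(x)=x$ reproduces $x/(1-x)$, so this is consistent.

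Next I would prove $C^d(x) = xF_{d+1}(x)/F_{d+2}(x)$ by induction on $d$ using the recursion just established. For the base case $d=1$ we need $xF_2(x)/F_3(x) = x\cdot 1/(1-x) = x/(1-x)$, which matches. (One can also check $d=0$: $xF_1(x)/F_2(x) = x\cdot 1/1 = x = C^0(x)$, so the formula even holds at $d=0$, giving a cleaner induction.) For the inductive step, assume $C^{d-1}(x) = xF_d(x)/F_{d+1}(x)$; then
\[
C^d(x) = \frac{x}{1 - C^{d-1}(x)} = \frac{x}{1 - \frac{xF_d(x)}{F_{d+1}(x)}} = \frac{xF_{d+1}(x)}{F_{d+1}(x) - xF_d(x)} = \frac{xF_{d+1}(x)}{F_{d+2}(x)},
\]
where the last equality is exactly the Fibonacci polynomial recurrence $F_{d+2}(x) = F_{d+1}(x) - xF_d(x)$. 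This completes the induction.

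There is no real obstacle here — the argument is a short specialization-plus-induction. The only points requiring care are bookkeeping: getting the index shifts on the Fibonacci polynomials right (the recurrence is indexed so that $F_{n}(x) = F_{n-1}(x) - xF_{n-2}(x)$, and we want $C^d$ paired with $F_{d+1}/F_{d+2}$), and being explicit about which zero-index convention ($C^0(x) = x$) is in force so that Proposition~\ref{prop:nil-lr-recurrence} applies at $d=1$. I would also note that the attribution to Kreweras~\cite{Kreweras} reflects that this continued-fraction/Fibonacci-polynomial expression for the generating function of Dyck paths of bounded height is classical; our contribution is deriving it from the tree recurrence in Proposition~\ref{prop:nil-lr-recurrence}.
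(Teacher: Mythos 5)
Your proof is correct and follows essentially the same route as the paper: specialize Proposition~\ref{prop:nil-lr-recurrence} to $e=1$ to get $C^d(x)=x+C^{d-1}(x)C^d(x)$, solve for $C^d(x)$, and then induct on $d$ using the Fibonacci polynomial recurrence $F_{d+2}(x)=F_{d+1}(x)-xF_d(x)$. Your extra care with the $C^0(x)=x$ convention and the $d=0$ base case is a harmless (and slightly cleaner) elaboration of the same argument.
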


\begin{proof}
Observe that $C^1(x) = x/(1-x)$.
By Proposition~\ref{prop:nil-lr-recurrence}, we have $C^{d}(x) = x + C^{d-1}(x)C^{d}(x)$ and thus $C^{d}(x) = x / (1-C^{d-1}(x))$.
By induction on $d$ we have $C^{d}(x) = xF_{d+1}(x)/F_{d+2}(x)$.
\end{proof}

In addition to work of Kreweras~\cite{Kreweras}, other papers have also discussed $C^d(x)$; see, e.g., Chow and West~\cite[Theorem~3.6]{ChowWest}, Krattenthaler~\cite[Theorem~2]{Krattenthaler}, and Mansour and Vainshtein~\cite[Theorem~1.1]{MansourVainshtein}.
Corollary~\ref{cor:nil} implies that $C^d(x)$ is a well-known continued fraction~\cite{DBKR,Flajolet}.
For example, we have
\[ C^{1}(x) = \frac x{1-x}, \quad 
C^{2}(x)  = \frac x { 1- \frac x {1-x} } = \frac{x(1-x)}{1-2x}, \quad
C^{3}(x)  = \frac x {1- \frac x { 1- \frac x {1-x} } } = \frac{x(1-2x)}{1-3x+x^2}, \quad \ldots.
\]
Hence $(C^d_n)_{d\ge1,n\ge0}$ coincides with an array in OEIS~\cite[A080934]{OEIS} and enumerates 
\begin{itemize}
\item
plane trees with $n+1$ nodes of depth at most $d$ (de Bruijn, Knuth, and Rice~\cite{DBKR}),
\item
Dyck paths of length $2n$ with height at most $d$ (Flajolet~\cite{Flajolet} and Kreweras~\cite[page 38]{Kreweras}),\item
permutations in $\SS_n$ avoiding $132$ and $123\cdots(d+1)$ (Krattenthaler~\cite{Krattenthaler}, Kitaev, Remmel, and Tiefenbruck~\cite{KRT12}), 
\item
ad-nilpotent ideals of the Borel subalgebra of the Lie algebra $\mathfrak{sl}_n(\mathbb C)$ of order at most $d-1$ (Andrews, Krattenthaler, Orsina, and Papi~\cite{AdNilIdeals}).
\end{itemize}
Note that plane trees with $n+1$ nodes of depth at most $d$ correspond to binary trees with $n+1$ leaves of left depth at most $d$.
The latter is a family of objects more relevant to our current work and can also be obtained from our later result Proposition~\ref{prop:MC} by setting $k=1$. 

There are many known closed formulas for the number $C^d_n$, such as
\begin{align*}
C^d_n &= \sum_{i\in\mathbb Z} \frac{2i(d+2)+1}{2n+1} \binom{2n+1}{n-i(d+2)} \qquad \text{\cite[Thm. 4.5]{AdNilIdeals}} \\ 
& = \det \left[ \binom{i-\max\{-1,j-d\}}{j-i+1} \right]_{i,j=1}^{n-1} 
\qquad \text{\cite[Thm. 4.5]{AdNilIdeals}} \\
&= \sum_{0=i_0\le i_1\le \cdots \le i_{d-1}\le i_{d}=n} \prod_{0\le j\le d-2} \binom{i_{j+2}-i_j-1}{i_{j+1}-i_j} 
\qquad \text{\cite[Cor. 4.3]{AdNilIdeals}} \\
&= \frac{2^{2n+1}}{d+2} \sum_{1\le j\le d+1} \sin^2 \frac{j\pi}{d+2} \cos^{2n}\frac{j\pi}{d+2}.  
\qquad \text{\cite[(14)]{DBKR}} 
\end{align*}
When $d$ is small, the number $C^d_n$ satisfies simpler formulas: one has $C^2_n=2^{n-1}$, $C^{3}_n = F_{2n-1}$, 
and $C^{4}_n = \frac12 (1+3^{n-1})$ for $n\ge1$~\cite{AdNilIdeals,KRT12}.

Now we derive a closed formula for $C^d_n$ from the generating function $C^d(x)$, which seems different from other known formulas for this number.
We write $\alpha\models n$ if $\alpha$ is a \demph{composition} of $n$, i.e., if $\alpha=(\alpha_1,\ldots,\alpha_\ell)$ is a sequence of positive integer such that $\alpha_1+\cdots+\alpha_\ell=n$.
We also define $\ell(\alpha):=\ell$ and $\max(\alpha) := \max\{\alpha_1,\ldots,\alpha_\ell\}$.

\begin{proposition}\label{prop:d1}
For $n,d\ge1$ we have
\[ C^d_n =  \sum_{ \substack{\alpha\models n \\ \max(\alpha) \le (d+1)/2 } } (-1)^{n-\ell(\alpha)} \binom{d-\alpha_1}{\alpha_1-1} \prod_{2\le r\le \ell(\alpha)} \binom{d+1-\alpha_i}{\alpha_i} \]
\end{proposition}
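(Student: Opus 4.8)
The plan is to extract the coefficient of $x^{n+1}$ from the closed form $C^d(x) = \dfrac{xF_{d+1}(x)}{F_{d+2}(x)}$ obtained in Corollary~\ref{cor:nil}. First I would rewrite this as a geometric-type expansion: since $C^d(x) = x/(1-C^{d-1}(x))$ and more usefully $C^d(x) = xF_{d+1}(x)/F_{d+2}(x)$, I would expand $1/F_{d+2}(x)$ as a power series. The key observation is that $F_{d+2}(x) = 1 - xG_d(x)$ where $G_d(x) := \sum_{0\le i} \binom{d-i}{i}(-x)^i$ (a truncation/shift of the Fibonacci polynomial coefficients read off from the stated formula $F_{n}(x) = \sum_{0\le i\le (n-1)/2}\binom{n-1-i}{i}(-x)^i$), so that $1/F_{d+2}(x) = \sum_{m\ge 0} x^m G_d(x)^m$. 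Multiplying by $xF_{d+1}(x)$ and collecting, the coefficient of $x^{n+1}$ becomes a sum over $m$ of $[x^{n-m}]\bigl(F_{d+1}(x)G_d(x)^m\bigr)$.

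Next I would convert this into the compositional sum in the statement. Each factor $G_d(x)^m$ contributes, by the multinomial expansion, a product $\prod \binom{d-\alpha_i'}{\alpha_i'}(-x)^{\sum \alpha_i'}$ over an $m$-tuple of nonnegative indices; reindexing by $\alpha_i := \alpha_i'+1 \ge 1$ turns the $m$ slots carrying a factor of $x$ into the parts $\alpha_2,\ldots,\alpha_{\ell(\alpha)}$ of a composition (so $m = \ell(\alpha)-1$), each contributing $\binom{d+1-\alpha_i}{\alpha_i}$ after the shift, and the constraint $0\le \alpha_i' \le (d-\alpha_i')/2$ coming from the support of $G_d$ becomes $\alpha_i \le (d+1)/2$. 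The leading factor $F_{d+1}(x)$ supplies the first part $\alpha_1$ with coefficient $\binom{d-\alpha_1}{\alpha_1-1}$ and the bound $\alpha_1 \le (d+1)/2$ from the support of $F_{d+1}$. The total power of $x$ being tracked is $1 + (n-m) = 1 + n - (\ell(\alpha)-1)$ against the shifted part-sum, which forces $\alpha_1 + \cdots + \alpha_{\ell(\alpha)} = n$, i.e.\ $\alpha \models n$; and the accumulated sign is $(-1)^{\sum(\alpha_i-1)} = (-1)^{n-\ell(\alpha)}$. Assembling these pieces gives exactly the claimed formula.

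The main obstacle I expect is bookkeeping the index shifts cleanly: making sure the ``first part'' genuinely comes from $F_{d+1}$ with the asymmetric binomial $\binom{d-\alpha_1}{\alpha_1-1}$ while the remaining parts come from the $G_d^m$ factor with $\binom{d+1-\alpha_i}{\alpha_i}$, and verifying the two different support bounds both collapse to the single clean condition $\max(\alpha)\le (d+1)/2$. I would double-check the identity $F_{d+2}(x) = 1 - x G_d(x)$ directly from the explicit coefficient formula for $F_n$ (it amounts to the Pascal-type recurrence $\binom{d+1-i}{i} = \binom{d-i}{i} + \binom{d-i}{i-1}$), and confirm the sign convention with the small cases listed after the definition of the Fibonacci polynomials, e.g.\ checking the formula against $C^2_n = 2^{n-1}$ and $C^3_n = F_{2n-1}$.
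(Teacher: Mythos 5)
Your overall strategy---expanding the closed form $C^d(x)=xF_{d+1}(x)/F_{d+2}(x)$ from Corollary~\ref{cor:nil} geometrically in $1/F_{d+2}(x)$ and reindexing the multinomial sum by compositions---is the same route the paper takes, but two pieces of your bookkeeping are wrong, and together they prevent the argument from arriving at the stated formula. First, the ``key observation'' $F_{d+2}(x)=1-xG_d(x)$ with $G_d(x)=\sum_{i\ge0}\binom{d-i}{i}(-x)^i$ is false: that $G_d$ is exactly $F_{d+1}(x)$, and $F_{d+2}\ne 1-xF_{d+1}$ (for $d=2$, $F_4=1-2x$ while $1-xF_3=1-x+x^2$). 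What is true is that peeling the constant term off $F_{d+2}$ itself gives $F_{d+2}(x)=1-x\sum_{i\ge0}\binom{d-i}{i+1}(-x)^i$, so the slot coefficients are $\binom{d-i}{i+1}$, not $\binom{d-i}{i}$; under $\alpha_r=i+1$ these do become $\binom{d+1-\alpha_r}{\alpha_r}$, so your final per-part factor is right even though your stated $G_d$ and the Pascal-rule justification are off by one.

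Second, and more seriously, the first part cannot come from $F_{d+1}$ with coefficient $\binom{d-\alpha_1}{\alpha_1-1}$ as you assert: the coefficients of $F_{d+1}$ are $\binom{d-i}{i}$, which under any reindexing give $\binom{d-\alpha_1}{\alpha_1}$ (no shift) or $\binom{d+1-\alpha_1}{\alpha_1-1}$ (shift by one), never $\binom{d-\alpha_1}{\alpha_1-1}$; and if you do shift the first index, the powers of $x$ (one from the prefactor, $\ell(\alpha)-1$ from the geometric expansion, plus the slot degrees) force $\alpha_1+\cdots+\alpha_{\ell(\alpha)}=n+1$, not $n$, with the sign coming out as $(-1)^{n+1-\ell(\alpha)}$. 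Your route, done correctly, therefore yields a different (numerically equivalent) composition formula, not the one claimed. The missing move---which is precisely the paper's first step---is to split $C^d(x)=x+x^2F_d(x)/F_{d+2}(x)$ using $F_{d+1}=F_{d+2}+xF_d$; then the first part is drawn from $F_d$, whose coefficients $\binom{d-1-i}{i}$ become $\binom{d-\alpha_1}{\alpha_1-1}$ under $\alpha_1=i+1$, the extra factor $x^2$ makes the parts sum to $n$ with sign $(-1)^{n-\ell(\alpha)}$, and both support bounds collapse to $\max(\alpha)\le(d+1)/2$. With those two corrections your computation becomes the paper's proof.
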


\begin{proof}

For $d\ge1$ we have
\begin{align*}
C^{d}(x) & =  x + \frac{x^2 F_d(x)}{ F_{d+2}(x)} 
= x + \frac{\sum_{0\le i\le(d-1)/2} {d-1-i\choose i} (-x)^{i+2}} 
{ 1+ \sum_{1\le i\le(d+1)/2} {d+1-i\choose i} (-x)^i } \\
&= x + \sum_{0\le i\le(d-1)/2} {d-1-i\choose i} (-x)^{i+2} \sum_{j\ge0} \left(\sum_{1\le i\le(d+1)/2} {d+1-i\choose i} (-1)^{i-1} x^i \right)^j
\end{align*}
Extracting the coefficient of $x^{n+1}$ from $C^d(x)$ gives
\begin{align*}
C^d_n & = \sum_{0\le i\le (d-1)/2} (-1)^i {d-1-i\choose i} \sum_{j\ge0} \sum_{ \substack{ 1\le i_1,\ldots,i_j\le (d+1)/2 \\ i_1+\cdots+i_j = n-1-i } } \prod_{1\le r\le j} (-1)^{i_r-1} \binom{d+1-i_r}{i_r} \\
&=  \sum_{j\ge0} (-1)^{n-1-j} \sum_{ \substack{ 1\le i_0,i_1,\ldots,i_j\le (d+1)/2 \\ i_0+i_1+\cdots+i_j = n} } \binom{d-i_0}{i_0-1} \prod_{1\le r\le j} \binom{d+1-i_r}{i_r} .
\end{align*}
Viewing $(i_0,\ldots,i_j)$ as a composition $\alpha$ of $n$ gives the desired formula. 
\end{proof}

Next, we give a new interpretation of the number $C^d_n$, which is very similar to the one obtained by Andrews, Krattenthaler, Orsina, and Papi~\cite{AdNilIdeals}.
We do not know any quick way to convert from one to the other.

Let $\mathcal U_n$ be the algebra of $n$-by-$n$ upper triangular matrices over a field $\FF$, with the usual matrix product. 
Using column operations one can write a (two-sided) ideal $I$ of $\mathcal U_n$ as an upper triangular matrix $[a_{ij}]_{1\le i,j\le n}$ with $a_{ij}\in\{0,*\}$ such that $a_{ij}=*$ implies $a_{ij'}=*$ for all $j'\ge j$ and $a_{i'j}=*$ for all $i'\le i$.
The elements of $I$ are all matrices in $\mathcal U_n$ whose $(i,j)$-entry is arbitrary if $a_{ij}=*$ or zero if $a_{ij}=0$. 
The ideal $I = [a_{ij}]_{1\le i,j\le n}$ is nilpotent if and only if $a_{ii}=0$ for all $i\in[n]$.
Thus the stars in the matrix form of a nilpotent ideal $I$ give a partition inside the staircase $(n-1,\ldots,1,0)$. 
It follows that the number of nilpotent ideals of $\mathcal U_n$ is the Catalan number $C_n$.

The \demph{order} of a nilpotent ideal $I$ is $\inf\{d:I^d=0\}$.
Observe that an ideal $I$ of $\mathcal U_n$ is commutative if and only if $I^2=0$.
Shapiro~\cite{Shapiro} showed that the number commutative ideals of $\mathcal U_n$ is $2^{n-1}$.
We generalize this result below, using the number $C^d_n$. 

\begin{proposition}\label{prop:NumberOfIdeals}
For $d\ge1$, nilpotent ideals of order at most $d$ in $\mathcal U_n$ are enumerated by $C^d_n$.
\end{proposition}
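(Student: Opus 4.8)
The plan is to set up a bijection between nilpotent ideals of order at most $d$ in $\mathcal U_n$ and the weighted binary trees in $\T^{d,1}_{n-1}$, whose cardinality is $C^{d}_{n-1}$... wait, I need to recheck the indexing.

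Let me reconsider. Actually $C^d_n = |\T^{d,1}_n|$ counts objects with leaf-weight-sum $n+1$, and nilpotent ideals of $\mathcal U_n$ are counted (in total, all orders) by $C_n$. So I want nilpotent ideals of $\mathcal U_n$ of order $\le d$ to be counted by $C^d_n$. But $C^d_n$ should reduce: for $d$ large, every nilpotent ideal has order $\le n$, so I'd want $C^n_n = C_n$; indeed $C^{d,1}_n = C_n$ once $d\ge n$ since every node has left depth $\le n-1$... this is consistent. For $d=1$: order $\le 1$ means $I=0$, so the count is $1 = C^1_n$? But $C^1_n = $ coefficient of $x^{n+1}$ in $x/(1-x) = 1$. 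Yes. For $d=2$: commutative ideals, Shapiro's $2^{n-1} = C^2_n$. Good.

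**Approach.** I would encode a nilpotent ideal $I$ by its "staircase profile," the partition $\lambda \subseteq (n-1,n-2,\dots,1,0)$ of stars inside the strict staircase. The order of $I$ is governed by how long a "chain" of star-positions one can stack: $I^d = 0$ iff there is no sequence of positions $(i_1,j_1),\dots,(i_d,j_d)$ all equal to $*$ with $j_1 \ge i_2$, $j_2 \ge i_3$, etc. (matrix multiplication composes a $*$ in columns $\ge$ the row of the next factor). So the order of $I$ equals the maximal length of such a chain, and order $\le d$ is a forbidden-chain condition on the partition $\lambda$. The key step is to translate this chain condition into the left-depth condition defining $\T^{d,1}_n$ via the standard bijection between subdiagonal lattice paths / Dyck paths and binary trees, under which the statistic "left depth of a leaf" corresponds to a natural statistic on the path, and the forbidden chain of length $d+1$ corresponds to a leaf of left depth $\ge d$.

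**Key steps, in order.** (1) Recall the matrix-form description of ideals given in the excerpt and verify the order-versus-chain statement: $I^{d} = 0$ iff the poset of star-positions (ordered by $(i,j) \le (i',j')$ when $j \ge i'$) has no chain of length $d$; equivalently order $= $ longest chain length. (2) Convert the partition $\lambda \subseteq \delta_{n-1}$ to a Dyck path of semilength $n$ (the boundary path of $\lambda$ inside the staircase). (3) Show the longest-chain statistic on star-positions becomes the height, or a left-depth-type statistic, on the Dyck path / corresponding binary tree; more precisely, identify the bijection $\T_n \leftrightarrow \{$Dyck paths, or staircase partitions$\}$ under which $\ell\delta$-depths of leaves match the chain lengths, so that "order $\le d$" matches "all left depths $\le d$". (4) Conclude the ideals of order $\le d$ biject with $\{t \in \T_n : \ell\delta_i(t) \le d \text{ for all } i\}$, and invoke Theorem~\ref{thm:nil-lr} / Corollary~\ref{cor:nil} (the case $e=1$): this set is exactly a set of $(*,n)$-class representatives, of size $C^d_n$ — or more directly match it with $\T^{d,1}_n$ by noting a binary tree with all left depths $\le d$ is its own $\phi$-image (no leaf is contractible unless its left depth is $\ge d$... one must be careful whether the bound is $\le d$ or $\le d-1$; I expect an off-by-one that is resolved by the weighting convention in $\T^{d,e}_n$, where a contractible leaf of left depth exactly $d-1$ carries a weight that re-expands).

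**Main obstacle.** The crux — and the step I expect to be most delicate — is step (3): pinning down the correct bijection from staircase partitions to binary trees so that "longest chain of stars" (an order-theoretic statistic on the Young-diagram cells) matches "maximal left depth of a leaf" exactly, with the right index offset. The subtlety is that the natural bijections (partition $\leftrightarrow$ Dyck path $\leftrightarrow$ binary tree) can send "chain length" to "height of path" rather than literally to "left depth," and one must check these agree after the $\T^{d,1}_n$-weighting; equivalently, one must verify that Brendon Rhoades's argument (whose presence is acknowledged in the paper) routes through exactly this comparison. Once the statistic correspondence is nailed, the enumeration is immediate from Corollary~\ref{cor:nil}.
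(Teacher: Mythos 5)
Your setup is sound: encoding a nilpotent ideal by its star pattern inside the staircase, characterizing the order as the longest chain of star positions $(i_1,i_2),(i_2,i_3),\ldots$ (this is exactly the notion of a $D$-admissible sequence used in the paper), and checking the boundary cases $d=1$ and $d=2$ are all correct. But the step you yourself flag as the crux --- step (3), where you hope the longest-chain statistic ``becomes the height, or a left-depth-type statistic'' under the natural bijection from staircase partitions to Dyck paths --- is a genuine gap, and as stated it is false for the natural boundary-path correspondence. Take the full ideal of all strictly upper-triangular matrices in $\mathcal U_n$: its order is $n$, but the boundary $D$ of its star region is the zigzag path hugging the main diagonal, whose height is $1$; conversely the zero ideal has order at most $1$ while its boundary path has height $n$. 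So no direct reading of ``chain length $=$ height of $D$'' (or of a left depth of the corresponding binary tree) can work, and the off-by-one weighting issues you mention are not the real difficulty.

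What actually closes the gap in the paper is a two-part argument you do not supply. First, one proves that the order of $I$ equals the number of \emph{parts of the bounce path} of $D$: the bounce path $E^{b_1}S^{b_1}\cdots E^{b_k}S^{b_k}$ yields a canonical $D$-admissible sequence $(a_1,\ldots,a_k)$ with $a_{j+1}=a_j+b_j$, giving order $\ge k$, and an exchange argument (replacing a longest admissible sequence prefix by $a_1,\ldots,a_k$ and noting no star lies northeast of $D$ in row $a_k$) gives order $\le k$. Second, one invokes the nontrivial bijection of Haglund between Dyck paths of length $2n$ with exactly $h$ bounce parts and Dyck paths of length $2n$ with height $h$, after which the Kreweras/Flajolet count of height-bounded Dyck paths gives $C^d_n$. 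The bounce path and this bounce-to-height bijection are precisely the missing ideas; without them (or an equivalent zeta-map-type construction translating the chain statistic into height or into the left-depth condition of Proposition~\ref{prop:MC}), your outline does not yet constitute a proof.
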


\begin{proof}
\footnote{We are grateful to Brendon Rhoades for pointing out this proof and allowing us to include it here.}
A nilpotent ideal $I$ of $\mathcal U_n$ is determined by the lower boundary $D$ of the stars in its matrix form, which can be identified as a Dyck path of length $2n$.
We say a sequence $(i_1,i_2,\ldots,i_d)$ of $d$ integers between $1$ and $n$ is \demph{$D$-admissible} if the matrix entry $(i_j,i_{j+1})$ lies to the northeast of $D$ for all $j=1,2,\ldots,d-1$.
For any positive integer $m$, we have $I^m\ne0$ if and only if there exists a $D$-admissible sequence $(i_1,i_2,\ldots,i_{m+1})$.
Thus the order of $I$ is the largest integer $d$ such that there exists a $D$-admissible sequence $(i_1,\ldots,i_d)$.

Next, we construct the \demph{bounce path} of $D$ by starting from the northwest corner, going east until hitting a south step of $D$, then turning south and bouncing off the main diagonal to proceed east, and repeating this process all the way to the southeast corner.
The bounce path must be of the form $E^{b_1}S^{b_1}E^{b_2}S^{b_2}\cdots E^{b_k}S^{b_k}$, where $E$ is an east step, $S$ is a south step, and $(b_1,b_2,\ldots,b_k)$ is a composition of $n$.
We say this bounce path has $k$ \demph{parts}.

For $j=1,\ldots,k-1$, the matrix entry $(a_j,a_{j+1})$, defined by $a_1=1$ and $a_{j+1}=a_j+b_j$, is immediately to the right of the first south step of the segment $S^{b_j}$, which also lies to the northeast of $D$. 
Thus $(a_1,\ldots,a_k)$ is $D$-admissible.
This shows that $d\ge k$.

On the other hand, the longest $D$-admissible sequence $(i_1,\ldots,i_d)$ must satisfy $i_1=1=a_1$; 
otherwise $(1,i_1,\ldots,i_d)$ would be an even longer $D$-admissible sequence. 
Let $m$ be the largest integer such that $a_j=i_j$ for $j=1,2,\ldots,m$.
If $m<k$ then $a_{m+1}<i_{m+1}$ and $(a_1,\ldots,a_m,a_{m+1},i_{m+2},\ldots,i_d)$ is also $D$-admissible.
Repeating this process gives a $D$-admissible sequence $(a_1,a_2,\ldots,a_k,i_{k+1},\ldots,i_d)$.
But there is no entry on the $a_k$-th row to the northeast of $D$. 
This implies $d=k$.

There exists a bijection between Dyck paths of length $2n$ with exactly $h$ parts in their bounce paths and Dyck paths of length $2n$ with height $h$; see, e.g., Haglund~\cite[Theorem 3.15 and Remark 3.16]{Haglund}.
Thus nilpotent ideals of order at most $d$ in $\mathcal U_n$ are in bijection with Dyck paths of length $2n$ with height at most $d$; the latter family is known to be enumerated by $C^d_n$
~\cite[page 38]{Kreweras}.
This establishes the result.
\end{proof}

\subsection{Associativity at depth $(d,2)$}
Now we give closed formulas for $C^{d,2}(x)$ and $C^{d,2}_n$.

\begin{proposition}\label{prop:d2}
For $d\ge2$ we have 
\[ C^{d,2}(x) 
= C^d(x) + \frac{x^{d+2}}{(1-2x) F_{d+2}(x)}. \]
Consequently, for $n,d\ge2$ we have
\[ C^{d,2}_n = C^d_n + \sum_{1\le i\le n-d} 2^{i-1} \sum_{ \substack{ \alpha\models n-d-i \\ \max(\alpha) \le (d+1)/2 }} (-1)^{n-d-i-\ell(\alpha) } \prod_{1\le j\le \ell(\alpha)} \binom{d+1-\alpha_j}{\alpha_j} .\]
\end{proposition}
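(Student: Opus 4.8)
The plan is to establish the generating-function identity first, by induction on $d \ge 2$, and then obtain the closed formula for $C^{d,2}_n$ by extracting the coefficient of $x^{n+1}$ in the same manner as in the proof of Proposition~\ref{prop:d1}.

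For the base case $d=2$, I would compute $C^{2,2}(x)$ directly: Proposition~\ref{prop:nil-lr-recurrence} with $e=2$ gives $C^{2,2}(x) = x + C^{1,2}(x)\,C^{2,1}(x)$, and since $C^{1,2}(x) = C^{2,1}(x) = C^2(x) = x(1-x)/(1-2x)$ by symmetry and Corollary~\ref{cor:nil}, one checks after clearing denominators (using $F_4(x) = 1-2x$) that $x + C^2(x)^2$ equals $C^2(x) + x^4/\bigl((1-2x)F_4(x)\bigr)$. For the inductive step, I would apply Proposition~\ref{prop:nil-lr-recurrence} with $e=2$ in the form $C^{d,2}(x) = x + C^{d-1,2}(x)\,C^d(x)$ and substitute the inductive hypothesis $C^{d-1,2}(x) = C^{d-1}(x) + x^{d+1}/\bigl((1-2x)F_{d+1}(x)\bigr)$. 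Two facts from Corollary~\ref{cor:nil} then finish the step: the recurrence $C^d(x) = x + C^{d-1}(x)\,C^d(x)$, which collapses $x + C^{d-1}(x)C^d(x)$ into $C^d(x)$; and the factorization $\dfrac{x^{d+2}}{(1-2x)F_{d+2}(x)} = C^d(x)\cdot\dfrac{x^{d+1}}{(1-2x)F_{d+1}(x)}$, which is immediate from the closed form $C^d(x) = xF_{d+1}(x)/F_{d+2}(x)$. Combining these yields $C^{d,2}(x) = C^d(x) + x^{d+2}/\bigl((1-2x)F_{d+2}(x)\bigr)$.

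For the coefficient formula I would write $C^{d,2}_n = C^d_n + [x^{n+1}]\dfrac{x^{d+2}}{(1-2x)F_{d+2}(x)} = C^d_n + [x^{n-d-1}]\dfrac{1}{(1-2x)F_{d+2}(x)}$, then expand $1/(1-2x) = \sum_{i\ge 0} 2^i x^i$ and expand $1/F_{d+2}(x)$ as a geometric series using $F_{d+2}(x) = 1 - \sum_{1\le i\le (d+1)/2}\binom{d+1-i}{i}(-1)^{i-1}x^i$, exactly as in the proof of Proposition~\ref{prop:d1}; this gives $[x^m]\,1/F_{d+2}(x) = \sum_{\alpha\models m,\ \max(\alpha)\le (d+1)/2}(-1)^{m-\ell(\alpha)}\prod_j\binom{d+1-\alpha_j}{\alpha_j}$ (with the empty composition contributing $1$ when $m=0$). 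Multiplying the two series and collecting the coefficient of $x^{n-d-1}$ produces a double sum over $0\le i\le n-d-1$ and compositions $\alpha\models n-d-1-i$; the substitution $i\mapsto i-1$ rewrites it as $\sum_{1\le i\le n-d}2^{i-1}\sum_{\alpha\models n-d-i,\ \max(\alpha)\le (d+1)/2}(-1)^{n-d-i-\ell(\alpha)}\prod_j\binom{d+1-\alpha_j}{\alpha_j}$, which is the stated expression.

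I do not expect a serious obstacle: the argument is essentially bookkeeping built on the two recurrences of Corollary~\ref{cor:nil}. The one point that deserves a moment's attention is recognizing the telescoping structure in the inductive step — that the ``error term'' $x^{d+2}/\bigl((1-2x)F_{d+2}(x)\bigr)$ is exactly $C^d(x)$ times its $(d-1)$-analogue — after which the induction is forced. The only other care point is the sign tracking $\prod_r(-1)^{i_r-1} = (-1)^{(\sum_r i_r)-\ell(\alpha)}$ and the final reindexing in the coefficient extraction, both of which are identical to steps already carried out in the proof of Proposition~\ref{prop:d1} and can simply be referenced.
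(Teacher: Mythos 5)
Your proposal is correct and follows essentially the same route as the paper: verify the $d=2$ case from Proposition~\ref{prop:nil-lr-recurrence} and Corollary~\ref{cor:nil}, induct on $d$ using the recurrence $C^{d,2}(x)=x+C^{d-1,2}(x)C^{d}(x)$ together with $C^{d}(x)=xF_{d+1}(x)/F_{d+2}(x)$, and then extract coefficients exactly as in Proposition~\ref{prop:d1}. In fact you supply the telescoping identity $\frac{x^{d+2}}{(1-2x)F_{d+2}(x)}=C^{d}(x)\cdot\frac{x^{d+1}}{(1-2x)F_{d+1}(x)}$ that the paper's terse inductive step leaves implicit.
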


\begin{proof}
By Proposition~\ref{prop:nil-lr-recurrence} and Corollary~\ref{cor:nil}, we can verify the formula for $d=2$, 
\[ C^{2,2}(x) = x + \frac{x^2(1-x)^2}{(1-2x)^2} = \frac{x(1-2x)(1-x)+x^4}{(1-2x)^2}. \]
For $d\ge3$ we obtain the desired formula for $C^{d,2}(x)$ by induction on $d$, Proposition~\ref{prop:nil-lr-recurrence}, and Corollary~\ref{cor:nil}.
It follows that
\begin{align*}
C^{d,2}(x) 
&= C^d(x) + \frac{\sum_{i\ge0} 2^i x^{d+i+2} }{ 1 + \sum_{1\le i\le (d+1)/2} \binom{d+1-i}{i} (-x)^i } \\
&= C^d(x) + \sum_{i\ge0} 2^i x^{d+i+2} \sum_{j\ge0} \left( \sum_{1\le i\le (d+1)/2} \binom{d+1-i}{i} (-1)^{i-1} x^i \right)^j .
\end{align*}
Hence
\begin{align*}
C^{d,2}_n &= C^d_n + \sum_{i\ge0} 2^{i} \sum_{j\ge0}  (-1)^{n-1-d-i-j} \sum_{ \substack{ 1\le i_1,\ldots,i_j\le (d+1)/2 \\ i_1+\cdots+i_j = n-1-d-i } } \prod_{1\le r\le j} \binom{d+1-i_r}{i_r} .
\end{align*}
This implies the desired formula for $C^{d,2}_n$.
\end{proof}

Using Proposition~\ref{prop:d2} we find $C^{2,2}_n$ and $C^{3,2}_n$ in OEIS.
The sequence $\{C^{2,2}_n:n\ge0\}$ is the binomial transformation of $1,1,2,2,3,3,\ldots$, has a simple formula $C^{2,2}_n = (n+2)2^{n-3}$ for $n\ge2$, and enumerates a few families of objects, such as copies of $r$ in all compositions of $n+r$ for any positive integer $r$, weak compositions of $n-1$ with exactly one zero, triangulations of a regular $(n+3)$-gon in which each triangle contains at least one side of the polygon, and so on~\cite[A045623]{OEIS}.
The sequence $(C^{3,2}_n)_{n\ge0}$ is the binomial transformation of $(\lfloor (\frac{1+\sqrt5}2)^n \rfloor )_{n\ge0}$ and satisfies the formula $C^{3,2}_n = \left(\frac{1+\sqrt5}{2}\right)^{2n-2} + \left(\frac{1-\sqrt5}{2}\right)^{2n-2} - 2^{n-2}$ for $n\ge2$~\cite[A142586]{OEIS}. 
We do not see $C^{4,2}_n$ in OEIS, but it also satisfies a simple formula.

\begin{proposition}
For $n\ge3$ we have $C^{4,2}_n = 1+ 5\cdot 3^{n-3} - 2^{n-3}$.
\end{proposition}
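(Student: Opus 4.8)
The plan is to obtain a closed rational form for the generating function $C^{4,2}(x)$ from Proposition~\ref{prop:d2} and then read off $C^{4,2}_n$ by partial fractions. Specializing Proposition~\ref{prop:d2} to $d=4$ gives
\[ C^{4,2}(x) = C^4(x) + \frac{x^6}{(1-2x)F_6(x)}. \]
Using $F_5(x)=1-3x+x^2$ and $F_6(x)=1-4x+3x^2=(1-x)(1-3x)$ together with Corollary~\ref{cor:nil}, which gives $C^4(x)=xF_5(x)/F_6(x)$, I would put both terms over the common denominator $(1-x)(1-2x)(1-3x)$. A short expansion of the numerator $x(1-3x+x^2)(1-2x)+x^6$ then yields
\[ C^{4,2}(x) = \frac{x-5x^2+7x^3-2x^4+x^6}{(1-x)(1-2x)(1-3x)}. \]

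Next I would carry out the partial fraction decomposition. Since the numerator has degree $6$ and the denominator degree $3$, the decomposition takes the form
\[ C^{4,2}(x) = P(x) + \frac{A}{1-x} + \frac{B}{1-2x} + \frac{C}{1-3x}, \]
where $P$ is a polynomial with $\deg P \le 3$. Evaluating $(1-x)C^{4,2}(x)$ at $x=1$, $(1-2x)C^{4,2}(x)$ at $x=\tfrac12$, and $(1-3x)C^{4,2}(x)$ at $x=\tfrac13$ gives $A=1$, $B=-\tfrac1{16}$, and $C=\tfrac5{81}$.

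Finally, because $\deg P \le 3$, for every $m\ge 4$ the coefficient $[x^m]C^{4,2}(x)$ equals $A + 2^m B + 3^m C = 1 - 2^{m-4} + 5\cdot 3^{m-4}$. Setting $m=n+1$ yields $C^{4,2}_n = 1 - 2^{n-3} + 5\cdot 3^{n-3}$ for all $n\ge 3$. The computation is entirely routine; the only point that needs attention---and the closest thing to an obstacle---is confirming $\deg P \le 3$, since this is what pins down the precise range $n\ge 3$ in which the closed formula is valid (note the $x^6$ correction term only affects coefficients with $m\ge 6$, yet the formula still agrees with $C^4_n$ for $n=3,4$).
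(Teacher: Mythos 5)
Your proof is correct and takes essentially the same route as the paper: both start from Proposition~\ref{prop:d2} with $d=4$ together with Corollary~\ref{cor:nil} and read off $C^{4,2}_n$ via a partial fraction decomposition over $(1-x)(1-2x)(1-3x)$. The only (cosmetic) difference is that you first combine everything into a single rational function and track the degree-$3$ polynomial part, which makes the validity range $n\ge3$ immediate, whereas the paper decomposes the two summands separately (keeping the factors $x^2$ and $x^6$ in front) and leaves the agreement at $n=3,4$ implicit.
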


\begin{proof}
By Proposition~\ref{prop:d2} we have
\begin{align*}
C^{4,2}(x) & = x + \frac{x^2(1-2x)}{1-4x+3x^2} + \frac{x^6}{(1-2x)(1-4x+3x^2)} \\
&= x + \left( \frac{1}{1-x}+\frac{1}{1-3x}\right) \frac{x^2}{2} + \left( \frac{1}{1-x} - \frac{8}{1-2x} + \frac{9}{1-3x} \right) \frac{x^6}{2} .
\end{align*}
This implies the desired formula for $C^{4,2}_n$. 
\end{proof}


We find no result on $C^{d,3}_n$ for $d\ge 3$ in OEIS.


\section{$k$-associativity of left depth $d$: the case $e=\ell=1$}\label{sec:mod-nil}

In this section we assume $*$ is a binary operation satisfying \eqref{eq:mod-nil-lr} with $e=\ell=1$, i.e., $t\sim_* t'$ if and only if $\ld(t) \sim_k^d \ld(t')$ for all $t,t'\in\T_n$. 
For $d,k\ge1$, we study $\defcolor{C^d_{k,n}} := C_{*,n}$ and $\defcolor{\tC^d_{k,n}} := \tC_{*,n}$.

\subsection{Equivalence classes}
We first study $(*,n)$-classes using rotations on binary trees.
Given a node $v$ in a binary tree $t$, a \demph{subtree rooted at $v$} is a subtree of $t$ whose root is $v$, and the \demph{maximal subtree rooted at $v$} is the subtree consisting of all descendants of $v$, including $v$ itself.
Given two binary trees $s$ and $t$, we say \demph{$t$ contains $s$ at left depth $d$} if $t$ contains $s$ as a subtree rooted at a node of left depth $d$ and \demph{$t$ avoids $s$ at left depth $d$} otherwise.

Given binary trees $s$ and $t$, write $\defcolor{s\wedge t}$ for the binary tree whose root has left and right maximal subtrees $s$ and $t$, respectively.
There is a natural bijection between the set of parenthesizations of $x_0*\cdots *x_n$ and the set $\defcolor{\T_n}$ of binary trees with $n$ internal nodes (i.e., with $n+1$ leaves) by replacing each $x_i$ by a leaf labeled $i$ and replacing each $*$ by $\wedge$.

Let $t$ be a binary tree and $v$ a node.
Suppose the maximal subtree of $t$ rooted at $v$ can be written as $t_0\wedge\cdots\wedge t_{k+1}$, where $t_0,\ldots,t_{k+1}$ are binary trees and the operations $\wedge$ are performed left-to-right.
Replacing this subtree with $t_0\wedge(t_1\wedge\cdots\wedge t_{k+1})$ in $t$ gives another binary tree $t'$.
We call the operation $t\mapsto t'$ a \demph{right $k$-rotation at $v$}, and call the inverse operation $t'\mapsto t$ a \demph{left $k$-rotation at $v$}.  We illustrate a right $2$-rotation below.
\[
\Tree [.  [. [. \rule{8pt}{0pt} \rule{8pt}{0pt} ] \rule{8pt}{0pt} ] \rule{8pt}{0pt} ]\longrightarrow
\Tree [. \rule{8pt}{0pt} [. [. \rule{8pt}{0pt} \rule{8pt}{0pt} ] \rule{8pt}{0pt} ] ]
\]

\begin{lemma}\label{lem:rotation}
A left or right $k$-rotation at a node $v$ in a binary tree $t$ produces another binary tree $t'$ satisfying $t\sim_k^d t'$ if and only if the left depth of $v$ in $t$ is at least $d-1$. 
\end{lemma}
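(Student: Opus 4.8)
The plan is to analyze the effect of a $k$-rotation on the left-depth sequence $\ld(t)$ and compare it with $\ld(t')$ under the relation $\sim_k^d$, using the characterization of $\sim_*$-equivalence in terms of $\ld$ given in \eqref{eq:mod-nil-lr} (with $e=\ell=1$, so the right-depth condition $\rd(t)\sim^1\rd(t')$ is automatic since $\sim^1$ is trivial). Fix a node $v$ of left depth $c$ in $t$, and suppose the maximal subtree at $v$ is $t_0\wedge\cdots\wedge t_{k+1}$, so that $t'$ replaces it by $t_0\wedge(t_1\wedge\cdots\wedge t_{k+1})$. First I would record which leaves have their left depth changed: the leaves of $t_0$ are unaffected; for $1\le j\le k+1$, a leaf in $t_j$ has its left depth changed by an amount that depends only on $j$. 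Concretely, in $t$ the root of $t_j$ sits at left depth $c+(k+1-j)$ for $1\le j\le k$ and at left depth $c+1$ for $j=k+1$ (reading the left-combed spine), while in $t'$ the root of $t_j$ sits at left depth $c+1$ for $j=1$ and at left depth $c$ for $2\le j\le k+1$ — the computation I have in mind is just counting left-steps along the spine of a left-combed product and of the right-nested product. So every leaf of $t_j$ has its left depth shifted by a constant $\delta_j$, and the key point is that $\delta_j \ge 0$ always, and the \emph{minimum} left depth attained among leaves affected with $\delta_j\neq 0$ equals the left depth of the root of the relevant $t_j$, which is $\ge c+1$ on one side and can be as small as $c$ on the other (this is the edge case that forces $v$ to be low in the tree).

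Next, for the "if" direction, assume $c=\ld_t(v)\ge d-1$. Then every leaf whose left depth changes has, in \emph{both} $t$ and $t'$, left depth $\ge c \ge d-1$, and in fact $\ge d$ except possibly for leaves sitting exactly at left depth $c=d-1$ in one of the two trees; but one checks that a leaf at left depth $c$ in $t'$ lies in some $t_j$ with $j\ge 2$ whose root has left depth $c$ in $t'$ and left depth $c+(k+1-j)$ in $t$, with $k+1-j$ a multiple... — here I must be careful, so the cleaner route is: all the shifts $\delta_j$ lie in $\{0,1,\dots\}$ and, crucially, $\delta_j \equiv 0 \pmod k$? No — rather, the set of left depths appearing among the changed leaves, in both trees, is contained in $[c,\infty)$, and whenever a changed leaf has left depth $<d$ in $t$ or in $t'$ it must have left depth exactly $c=d-1$ and one verifies directly from the spine count that such a leaf has the \emph{same} left depth $d-1$ in $t$ and $t'$ (it lies in $t_1$, which is at left depth $c+1$... ) — the upshot is $\ld(t)\sim^d\ld(t')$ for the $\sim^d$ part, and for the $\sim_k$ part one observes each $\delta_j$ is divisible by $k$: indeed $\delta_j = (c+k+1-j)-(c) $ or similar, and running $j$ from $1$ to $k+1$ the shifts are $k, k-1,\dots$ — this is where I need to double-check the exact spine arithmetic, but the design of $k$-rotation (moving $k+1$ subtrees) is precisely rigged so that each affected block's left depth changes by a multiple of $k$. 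Granting that, $\ld(t)\sim_k\ld(t')$, hence $\ld(t)\sim_k^d\ld(t')$, hence $t\sim_* t'$ by \eqref{eq:mod-nil-lr}.

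For the "only if" direction, suppose $\ld_t(v)=c\le d-2$. I would exhibit a leaf witnessing failure of $\sim^d$: take a leaf $i$ in $t_2$ (which exists since there are $k+1\ge 2$ subtrees $t_1,\dots,t_{k+1}$); its left depth in $t$ is $\ge c+2$ but could be large, whereas in $t'$ its left depth drops by the shift $\delta_2\ge 1$, landing it at left depth $\le d-2+$ something — more precisely choose $i$ to be the leaf of smallest left depth in $t_2$, realizing left depth $= c + (\text{shift})$ in one tree and left depth $= c+1$ or $c$ in $t'$; since $c\le d-2$, we get $\min\{\ld_i(t),\ld_i(t')\}\le d-1<d$ while $\ld_i(t)\neq\ld_i(t')$ (they differ by the nonzero shift $\delta_2$). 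Hence $\ld(t)\not\sim^d\ld(t')$, so $\ld(t)\not\sim_k^d\ld(t')$, so $t\not\sim_* t'$. I expect the main obstacle to be bookkeeping the exact left depths of the roots of $t_0,\dots,t_{k+1}$ in $t$ versus $t'$ — i.e., pinning down the shift $\delta_j$ for each block and confirming both that $\delta_j$ is a nonnegative multiple of $k$ and that the genuinely problematic low leaves in the "only if" case have strictly smaller left depth than $d$; everything else is a direct application of Theorem~\ref{thm:nil-lr} / \eqref{eq:mod-nil-lr}.
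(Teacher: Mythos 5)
Your overall plan is the same as the paper's: track how the rotation changes the left-depth sequence block by block and then read off the $\sim_k^d$ condition. But the spine arithmetic at the heart of your argument is wrong, and the errors are not just bookkeeping. Writing $c=\ld(v)$, in $t$ the left-combed subtree $t_0\wedge\cdots\wedge t_{k+1}$ places the root of $t_0$ at left depth $c+k+1$ and the root of $t_j$ at left depth $c+k+1-j$ for $1\le j\le k+1$; in $t'=t_0\wedge(t_1\wedge\cdots\wedge t_{k+1})$ the root of $t_0$ sits at $c+1$ while the roots of $t_1,\ldots,t_{k+1}$ sit at exactly the same left depths $c+k,\ldots,c$ as before. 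So the \emph{only} leaves whose left depths change are those of $t_0$, and each changes by exactly $k$. Your claim that the blocks $t_j$ with $j\ge 2$ move, with shifts ``$k,k-1,\dots$'', is false, and it is also self-defeating: if the shifts really took all values $k,k-1,\ldots,1$, the congruence condition $\ld(t)\sim_k\ld(t')$ would \emph{fail}, so the divisibility-by-$k$ step you defer with ``granting that'' cannot be patched under your description of the shifts. The correct statement (each changed leaf moves by exactly $k$) is what makes $\sim_k$ automatic, after which the whole lemma reduces to the $\sim^d$ condition: the smallest left depth among changed leaves is $c+1+k$ in $t$ and $c+1$ in $t'$ (attained at the rightmost leaf of $t_0$), so $\sim^d$ holds if and only if $c+1\ge d$, i.e.\ $\ld(v)\ge d-1$. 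This is exactly the paper's argument.

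The ``only if'' direction in your write-up fails for the same reason: you propose to exhibit a violating leaf inside $t_2$, but no leaf of $t_2$ (or of any $t_j$ with $j\ge1$) changes left depth under the rotation, so no such witness exists there. The correct witness when $\ld(v)\le d-2$ is the rightmost leaf of $t_0$: its left depths in $t$ and $t'$ are $c+1+k\ne c+1$ with $\min=c+1\le d-1<d$, which violates $\sim^d$ and hence $\sim_k^d$. (As a side remark, the paper's proof labels the moving block ``$t_1$'', but its stated depths $\ld(v)+1+k\mapsto\ld(v)+1$ identify it as the leftmost block; do not let that index slip propagate into your computation.)
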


\begin{proof}
A right $k$-rotation $t\mapsto t'$ at $v$ replaces the maximal subtree $t_0\wedge\cdots\wedge t_{k+1}$ of $t$ rooted at $v$ with $t_0\wedge(t_1\wedge\cdots\wedge t_{k+1})$.
This corresponds to a change in left depth by subtracting $k$ from the left depth of each leaf of $t_1$ and leaving the left depths of all other leaves of $t$ invariant. 
If $\ld(v)$ is the left depth of $v$ in $t$, then the rightmost leaf of $t_1$ has left depth $\ld(v)+1+k$ in $t$, which is the smallest among all leaves of $t$, and has left depth $\ld(v)+1$ in $t'$.
Hence $t \sim_k^d t'$ if and only if $\ld(v)\ge d-1$.
For a left $k$-rotation the proof is similar. 
\end{proof}

If $s\in \T_n$ can be obtained from $t\in \T_n$ by finitely many left $k$-rotations at nodes of left depths at least $d-1$ then we say $s\le_k^d t$.
We call this partial order on $\T_n$ the \demph{${d\choose k}$-order}.
For $d=1$, this is the $k$-associative order, and when $d=k=1$ this further specializes to the Tamari order.
For details on these orders, including Hasse diagrams of the posets given by the ${1\choose 1}$-order and ${1\choose 2}$-order on $\T_4$, see~\cite{CatMod}.  A binary tree minimal or maximal under the ${d\choose k}$-order is called \demph{${d\choose k}$-minimal} or \demph{${d\choose k}$-maximal}.

\begin{example}
The poset in Figure~\ref{fig:22order} shows a connected component of the  ${2\choose 2}$-order on $\T_6$.  Observe that this component has a unique ${2\choose 2}$-minimal tree and two ${2\choose 2}$-maximal trees.
\end{example}

\begin{figure}[h]
\[
 \includegraphics[width=.31\textwidth]{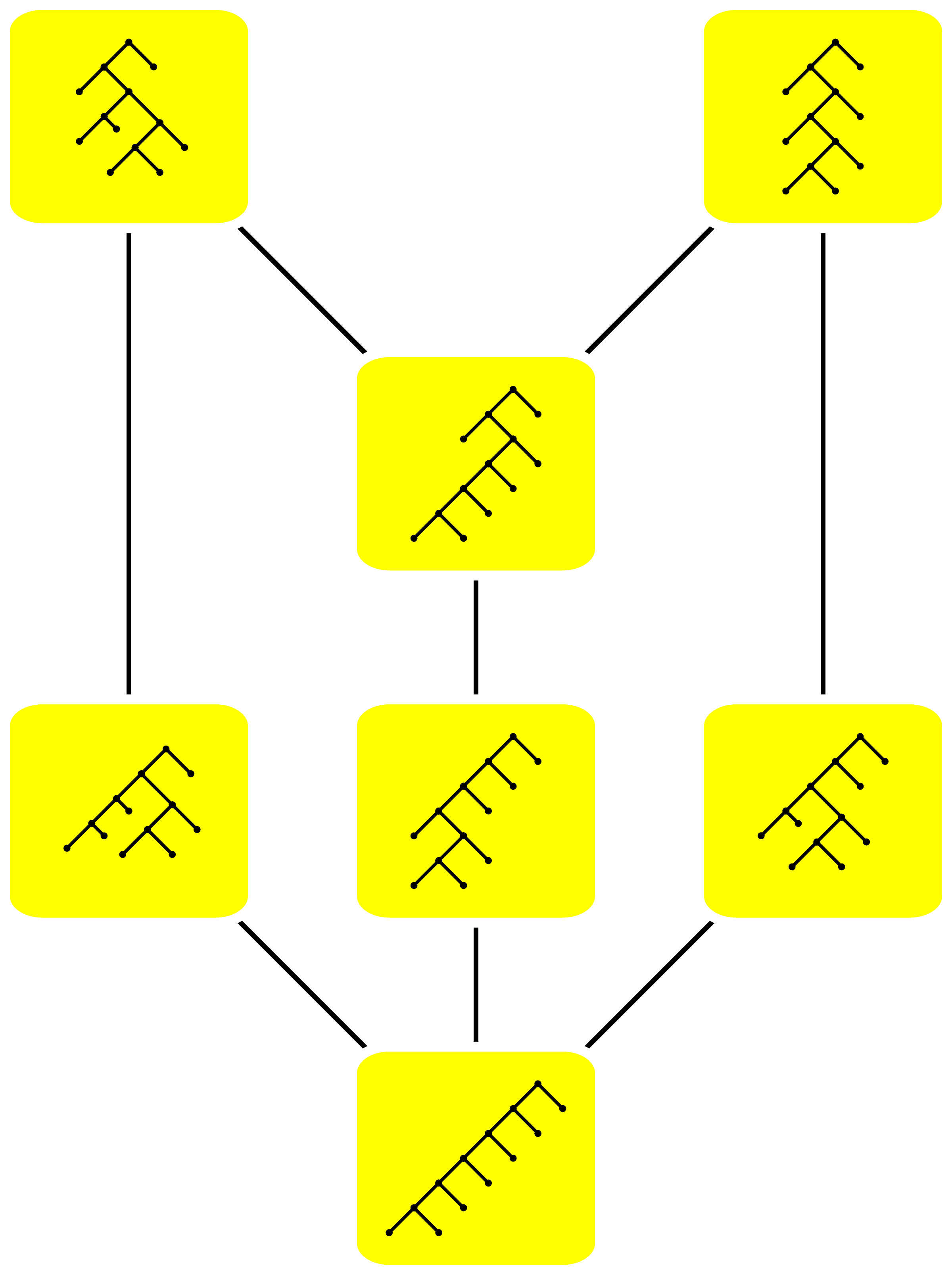}
\]
\caption{${2\choose 2}$-order on $\T_6$}\label{fig:22order}
\end{figure}

For each $k\ge1$ we define $\defcolor{\comb_k} := t_0\wedge\cdots\wedge t_k$ and $\defcolor{\comb_k^1} := t_0\wedge\comb_k$ where $t_0=\cdots=t_k \in \T_0$ (for a picture, see our previous work~\cite[Fig.~2.6]{CatMod}). 
The following result is straightforward.

\begin{proposition}\label{prop:comb}
Let $t$ be a binary tree. For $d,k\ge1$ we have

\noindent(i) $t$ is ${d\choose k}$-minimal if and only if it avoids $\comb_k^1$ at any left depth at least $d-1$, and

\noindent(ii) $t$ is ${d\choose k}$-maximal if and only if it avoids $\comb_{k+1}$ at any left depth at least $d-1$.
\end{proposition}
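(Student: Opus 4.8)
The plan is to prove both parts simultaneously by unwinding the definitions of ${d\choose k}$-minimal and ${d\choose k}$-maximal in terms of $k$-rotations. Recall from Lemma~\ref{lem:rotation} that a left $k$-rotation at a node $v$ is a legal move in the ${d\choose k}$-order exactly when $v$ has left depth at least $d-1$. So $t$ fails to be ${d\choose k}$-minimal precisely when some such legal left $k$-rotation can be performed, and fails to be ${d\choose k}$-maximal precisely when some legal right $k$-rotation can be performed. Thus everything reduces to characterizing, combinatorially, when a left (resp.\ right) $k$-rotation is available at a node of left depth at least $d-1$, and translating that into the "contains/avoids a comb at left depth $\ge d-1$" language.

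For part (i): a left $k$-rotation at a node $v$ is possible exactly when the maximal subtree rooted at $v$ has the shape $t_0\wedge(t_1\wedge\cdots\wedge t_{k+1})$ for binary trees $t_i$, equivalently when the maximal subtree at $v$ has a right spine long enough to "see" the pattern $\comb_k^1$ as a subtree rooted at $v$. I would first fix conventions: write $\comb_k^1$ for the binary tree corresponding to the fully right-associated parenthesization $x_0*(x_1*(\cdots *x_k))$ on the appropriate number of leaves (matching the notation used elsewhere in the paper for combs), and observe that the maximal subtree at $v$ decomposes as $t_0\wedge(t_1\wedge\cdots\wedge t_{k+1})$ if and only if $t$ contains $\comb_k^1$ as a subtree rooted at $v$. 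Combining with Lemma~\ref{lem:rotation}, $t$ is \emph{not} ${d\choose k}$-minimal iff there is a node $v$ with $\ld(v)\ge d-1$ at which $t$ contains $\comb_k^1$; negating gives the statement. For part (ii): symmetrically, a right $k$-rotation at $v$ is available iff the maximal subtree at $v$ has the shape $t_0\wedge\cdots\wedge t_{k+1}$ with the $\wedge$'s evaluated left-to-right, equivalently iff $t$ contains $\comb_{k+1}$ (the fully left-associated comb on the relevant number of leaves) as a subtree rooted at $v$; again invoke Lemma~\ref{lem:rotation} and negate.

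The main obstacle, such as it is, is purely bookkeeping: pinning down the exact index and orientation conventions for $\comb_k^1$ versus $\comb_{k+1}$ so that "the maximal subtree at $v$ is $t_0\wedge(t_1\wedge\cdots\wedge t_{k+1})$" is literally the same statement as "$t$ contains $\comb_k^1$ at $v$," and checking the off-by-one in the number of leaves ($k+2$ trees $t_0,\dots,t_{k+1}$ combine under $k+1$ applications of $\wedge$). One subtlety worth a sentence: "avoids $\comb$ at any left depth at least $d-1$" must be read as "at every node of left depth $\ge d-1$, the maximal subtree rooted there does not contain the comb as a subtree rooted at that node" — and one should note that if $t$ avoids the comb at left depth exactly $d-1$ at every such node, it automatically avoids it at all larger left depths too, since any occurrence deep inside would force an occurrence at an ancestor of left depth $d-1$ (the right spine of a comb is a chain of right children, so a comb rooted at a node of left depth $>d-1$ sits inside the maximal subtree rooted at the nearest ancestor of left depth $d-1$, which then also contains a comb rooted there). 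With these conventions fixed the proof is a one-line application of Lemma~\ref{lem:rotation} in each direction, which is why the statement is flagged as straightforward.
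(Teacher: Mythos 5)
Your main argument is correct and is exactly the intended one: the paper offers no proof (it calls the result straightforward), and unwinding the definitions as you do --- $t$ fails to be ${d\choose k}$-minimal (resp.\ maximal) iff a left (resp.\ right) $k$-rotation is available at some node $v$ of left depth at least $d-1$, which happens iff the maximal subtree at $v$ has the form $t_0\wedge(t_1\wedge\cdots\wedge t_{k+1})$ (resp.\ $t_0\wedge\cdots\wedge t_{k+1}$), i.e.\ iff $t$ contains $\comb_k^1$ (resp.\ $\comb_{k+1}$) as a subtree rooted at $v$ --- is all that is needed. One caveat: your parenthetical claim that avoiding the comb at every node of left depth exactly $d-1$ forces avoidance at all greater left depths is false (a node at left depth $d-1$ whose right child is a leaf contains no $\comb_k^1$ rooted there, yet a descendant at left depth $d$ may well root a comb); fortunately this aside is not load-bearing, since the statement quantifies directly over all nodes of left depth at least $d-1$ and your main equivalence already does the same.
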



Let $t\in\T_n$.
The \demph{left border} of $t$ is the set of all nodes with right depth zero in $t$. 
Let $r_1,r_2,\ldots,r_h$ be the nodes on the left border of $t$ so that $r_1$ is the root and $r_{i+1}$ is the left child of $r_i$ for all $i=1,\ldots,h-1$.
For $2\le i\le h$ we define a tree $\defcolor{t^{+i}}\in\mathcal T_{n+1}$ by first cutting $t$ at $r_i$ to get the maximal subtree $s$ of $t$ rooted at $r_i$ and another subtree $s'$ of $t$ with leftmost leaf $r_i$, and then inserting the tree $u\in\mathcal T_1$, identifying the root of $u$ with the leaf $r_i$ in $s'$ and the right leaf of $u$ with the root $r_i$ of $s$.
The tree $\defcolor{t^{+1}}$ is obtained analogously by attaching $t$ to the right leaf of the tree $u\in\mathcal T_1$.
By construction, $\ld(t^{+i})=(i,\ld_0(t),\dotsc,\ld_n(t))$ for all $i=1,2,\ldots,h$.
For example, the left picture below is a tree $t\in\mathcal T_8$ with $\ld(t) = (5,4,3,3,3,2,2,1,0)$ and the right picture below is the tree $t^{+3}\in\mathcal T_9$ with $\ld(t^{+3}) = (3,5,4,3,3,3,2,2,1,0)$.

{\vskip10pt\scriptsize
 \Tree[. [. [. [. [. {\nola} {\nola} ] {\nola} ] [. {\nola} [. {\nola} {\nola} ] ] ] [. {\nola} {\nola} ] ] {\nola} ] 
 \Tree[. [. [. {\nola} [. [. [. {\nola} {\nola} ] {\nola} ] [. {\nola} [. {\nola} {\nola} ] ] ] ] !\qsetw{1in} [. {\nola} {\nola} ] ] {\nola} ] }

Conversely, write $\defcolor{t_-}$ for the binary tree in $\T_{n-1}$ obtained from $t\in\T_n$ by contracting the leftmost leaf, its sibling, and their parent to a single node.
One sees that $\ld(t^-) = (\ld_1,\ldots,\ld_n)$.
Thus $s\sim_k^d t$ implies $s^- \sim_k^d t^-$. 
Furthermore, if $\ld_0(t)=i$ then $t_-^{+i}:=(t_-)^{+i}=t$.

\begin{proposition}\label{prop:minimal}
Assume $*$ satisfies \eqref{eq:mod-nil-lr} with $e=\ell=1$. 
Then each $(*,n)$-class has a unique ${d\choose k}$-minimal element.
\end{proposition}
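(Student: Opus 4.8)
The plan is to show that within each $(*,n)$-class the ${d\choose k}$-order has a unique minimal element by proving the stronger statement that the ${d\choose k}$-order restricted to a single class is a \emph{confluent} (i.e. has the diamond property up to descent), hence any two minimal elements are comparable and therefore equal. First I would recall, via Lemma~\ref{lem:rotation}, that $t\sim_k^d t'$ exactly when $t'$ is reachable from $t$ by $k$-rotations at nodes of left depth $\ge d-1$; so a $(*,n)$-class is precisely a connected component of the graph whose edges are such rotations, and each class is closed under $\le_k^d$. By Proposition~\ref{prop:comb}(i), the ${d\choose k}$-minimal trees are exactly those avoiding $\comb^1_k$ at every left depth $\ge d-1$. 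So it suffices to show: starting from any $t$, all maximal sequences of left $k$-rotations at nodes of left depth $\ge d-1$ terminate at the \emph{same} tree.

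The key step is a local confluence (Newman-type) argument. Suppose $t$ admits two distinct left $k$-rotations, at nodes $u$ and $v$ of left depth $\ge d-1$, producing $t_1$ and $t_2$. I would split into cases by the relative position of $u$ and $v$: (a) the maximal subtrees rooted at $u$ and $v$ are disjoint — then the two rotations commute and a common tree $t_3$ is reached in one more step each; (b) one of the subtrees, say that rooted at $v$, lies strictly inside the subtree rooted at $u$ but is disjoint from the ``active'' part of the rotation at $u$ (i.e. does not involve the comb spine $t_0\wedge\cdots\wedge t_{k+1}$ at $u$) — again the rotations commute; (c) $v$ lies on the comb spine of the rotation at $u$ (or vice versa). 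Case (c) is the crux: here I would analyze how a rotation at $u$ changes the left depth of $v$ and the shape of the spine, and exhibit an explicit finite sequence of rotations from $t_1$ and from $t_2$ down to a common tree, checking throughout (using the left-depth bookkeeping $\ld(v)+1+k$ from the proof of Lemma~\ref{lem:rotation}) that every rotation used is still at a node of left depth $\ge d-1$, so we stay inside the class. Since left $k$-rotations strictly decrease a natural statistic (e.g. the sum $\sum_i \ld_i(t)$, which drops by $k$ at each left rotation), the rewriting system is terminating, so by Newman's Lemma local confluence upgrades to global confluence, yielding a unique normal form — the unique ${d\choose k}$-minimal element of the class.

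The main obstacle I anticipate is case (c): when the two rotations interact on a shared comb spine, the bookkeeping is fiddly, because a rotation at $u$ can lengthen or shorten the comb at $v$ and can change whether $v$ is still ``rotatable'' in the right sense. One clean way to organize this is to induct on $n$ using the $t\mapsto t^{+i}$ and $t\mapsto t_-$ operations introduced just before the statement: the observations $\ld(t^-)=(\ld_1,\dots,\ld_n)$ and ``$s\sim_k^d t$ implies $s^-\sim_k^d t^-$'', together with $t_-^{+\ld_0(t)}=t$, let one reduce a class in $\T_n$ to a class in $\T_{n-1}$ after stripping the leftmost leaf, provided one handles separately the rotations that act at the root region (left depth near $d-1$). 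So the proof would proceed: (1) set up the rewriting system and termination; (2) handle the disjoint and nested-but-independent cases of local confluence directly; (3) handle the spine-overlap case, either by the explicit rotation sequences above or by the induction on $n$ via $(\cdot)_-$ and $(\cdot)^{+i}$; (4) invoke Newman's Lemma and identify the normal form with the ${d\choose k}$-minimal tree by Proposition~\ref{prop:comb}(i).
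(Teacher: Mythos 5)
There is a genuine gap, and it is structural rather than a matter of missing details. Your argument rests on the assertion that, by Lemma~\ref{lem:rotation}, ``$t\sim_k^d t'$ exactly when $t'$ is reachable from $t$ by $k$-rotations at nodes of left depth $\ge d-1$,'' so that a $(*,n)$-class is a connected component of the rotation graph. But Lemma~\ref{lem:rotation} only gives one direction: a single rotation at a node of left depth $\ge d-1$ stays inside the class. The converse --- that any two $\sim_k^d$-equivalent trees are connected by such rotations --- is precisely Theorem~\ref{thm:mod-nil}, which the paper deduces \emph{from} Proposition~\ref{prop:minimal}. With only the forward direction in hand, a class is a priori a disjoint union of several rotation components, and your Newman's-Lemma argument (termination plus local confluence) would yield a unique normal form \emph{per component}, not per class: two distinct ${d\choose k}$-minimal trees lying in different components of the same class are not excluded by confluence. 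In other words, the hard content of the proposition is exactly that two distinct trees avoiding $\comb_k^1$ at left depth $\ge d-1$ cannot have $\sim_k^d$-equivalent left-depth sequences, and no amount of diagram-chasing among rotations addresses that, since it is a statement about the equivalence relation on depth sequences, not about the rewriting system. (Beyond this, the crux case (c) of local confluence is only sketched, and note the sign slip: a left $k$-rotation \emph{adds} $k$ to the left depths of the leaves of $t_1$, so your proposed statistic moves the other way; termination still holds, but the bookkeeping would need care.)

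The paper's proof avoids the rewriting system entirely: it works directly with $\sim_k^d$ on left-depth vectors and inducts on $n$ by stripping the leftmost leaf. If $s\ne t$ are equivalent and $s_-\ne t_-$, the induction hypothesis applied to $s_-\sim_k^d t_-$ produces a copy of $\comb_k^1$ at left depth $\ge d-1$ in $s$ or $t$; if $s_-=t_-$, then $\ld_0(s)=i\ne j=\ld_0(t)$, and since $s=t_-^{+i}$, $t=t_-^{+j}$ with $i\equiv j\pmod k$, $\min\{i,j\}\ge d$, the tree with the smaller value of $\ld_0$ visibly contains $\comb_k^1$ at left depth $\ge d-1$, so it is not ${d\choose k}$-minimal. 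Your idea of using $(\cdot)_-$ and $(\cdot)^{+i}$ appears only as a device for one subcase of local confluence; to repair your proof you would either have to carry out that induction in full (at which point the confluence machinery is superfluous) or independently prove that each $\sim_k^d$-class is a single rotation component, which is equivalent to the theorem you would be assuming.
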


\begin{proof}
Since $\T_n$ is a finite set, each $(*,n)$-class must contain a ${d\choose k}$-minimal element.
We prove by induction on $n$ that each $(*,n)$-class has only one ${d\choose k}$-minimal element.
Let $s$ and $t$ are distinct binary trees in $\T_n$ such that $s\sim_k^d t$. 
We need to show that either $s$ or $t$ is not ${d\choose k}$-minimal, i.e., contains $\comb_k^1$ at left depth at least $d-1$.

First assume $s_-\ne t_-$. 
Since $s\sim_k^d t$ implies $s_- \sim_k^d t_-$, it follows from the induction hypothesis that, either $s_-$ or $t_-$, say the former, contains $\comb_k^1$ at left depth at least $d-1$.
Hence $s$ also contains $\comb_k^1$ at left depth at least $d-1$.

Next assume $s_-=t_-$.
This together with $s\ne t$ implies that $i=\ld_0(s)$ and $j=\ld_0(t)$ are distinct.
Assume $i<j$, without loss of generality.
Then $s=s_-^{+i}=t_-^{+i}$ contains $\comb_{j-i}^1$ at left depth $i-1$. 
Since $s\sim_k^d t=t_-^{+j}$, we have $i\ge d$ and $j=i+km$ for some $m>0$.
Thus $s$ contains $\comb_k^1$ at left depth $i-1\ge d-1$.
\end{proof}

Connected components of the Hasse diagram of the ${d\choose k}$-order on $\T_n$ are called \demph{${d\choose k}$-components}.

\begin{theorem}\label{thm:mod-nil}
Let $d,k\ge1$ and $n\ge0$.
Assume $*$ is a binary operation satisfying \eqref{eq:mod-nil-lr} with $e=\ell=1$.
Then the $(*,n)$-classes are precisely the ${d\choose k}$-components of $\T_n$.
\end{theorem}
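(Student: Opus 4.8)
The plan is to prove that the two set partitions of $\T_n$ in question — the $(*,n)$-classes and the ${d\choose k}$-components — refine one another, and hence coincide. The two ingredients are Lemma~\ref{lem:rotation}, which says that a $k$-rotation at a node of left depth at least $d-1$ preserves the relation $\sim_*$, and Proposition~\ref{prop:minimal}, which says that each $(*,n)$-class contains exactly one ${d\choose k}$-minimal tree.

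First I would record an elementary reformulation of the ${d\choose k}$-components: two trees $s,t\in\T_n$ lie in the same ${d\choose k}$-component if and only if $s$ can be obtained from $t$ by a finite sequence of left or right $k$-rotations at nodes of left depth at least $d-1$. Indeed, each such single rotation relates two trees that are comparable in the ${d\choose k}$-order, and in a finite poset comparable elements are joined by a saturated chain, hence by a path in the Hasse diagram; conversely every Hasse edge is of this form. Granting this, one inclusion is immediate: if $s$ and $t$ lie in the same ${d\choose k}$-component, then applying Lemma~\ref{lem:rotation} to each rotation of a connecting sequence and using that $\sim_*$ is an equivalence relation gives $s\sim_* t$. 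Thus each ${d\choose k}$-component is contained in a single $(*,n)$-class.

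For the reverse inclusion I would take $s\sim_* t$ and let $C_s,C_t$ be their ${d\choose k}$-components. Since $\T_n$ is finite, $C_s$ has a minimal element $m_s$ in the induced order; moreover $m_s$ is ${d\choose k}$-minimal in all of $\T_n$, because any tree strictly below it would be joined to it by a saturated chain and hence already lie in $C_s$, contradicting minimality there. Define $m_t\in C_t$ analogously. By the inclusion just proved, $m_s\sim_* s$ and $m_t\sim_* t$, so $m_s\sim_* m_t$; since both are ${d\choose k}$-minimal, Proposition~\ref{prop:minimal} forces $m_s=m_t$. Then $C_s$ and $C_t$ share an element and so are equal, whence $s$ and $t$ lie in the same ${d\choose k}$-component. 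This gives the reverse inclusion and finishes the proof.

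I do not expect a serious obstacle here: once Lemma~\ref{lem:rotation} and Proposition~\ref{prop:minimal} are available, the argument is essentially bookkeeping. The one point that needs a moment's care is the passage between ``minimal in a component'' and ``${d\choose k}$-minimal in $\T_n$'', which relies only on $\le_k^d$ being a partial order on the finite set $\T_n$. If desired, the reverse inclusion can instead be obtained by a counting argument: the first inclusion shows there are at least as many ${d\choose k}$-components as $(*,n)$-classes, while Proposition~\ref{prop:minimal} puts $(*,n)$-classes in bijection with ${d\choose k}$-minimal trees, each ${d\choose k}$-component containing at least one such tree; comparing counts and invoking refinement again yields the theorem.
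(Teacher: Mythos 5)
Your proof is correct and follows essentially the same route as the paper: Lemma~\ref{lem:rotation} gives that each ${d\choose k}$-component lies in a single $(*,n)$-class, and the uniqueness of ${d\choose k}$-minimal elements from Proposition~\ref{prop:minimal} forces each class to be a single component. Your explicit check that a tree minimal within its component is in fact ${d\choose k}$-minimal in all of $\T_n$ is a detail the paper's two-line proof leaves implicit, and it is handled correctly.
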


\begin{proof}
By Lemma~\ref{lem:rotation}, each ${d\choose k}$-component of $\T_n$ is contained in some ${d\choose k}$-class.
We have the quality holds by Proposition~\ref{prop:minimal}.
\end{proof}

A subpath $L'$ of a lattice path $L$ is \demph{at height} $h$ if the initial point of $L'$ has height $h$.
We say $L$ \demph{avoids} $L'$ \demph{at height} $h$ is $L$ contains no subpath $L'$ at height $h$.

\begin{proposition}\label{prop:objects}
For $k,d\ge1$ and $n\ge0$, the number $C_{k,n}^d$ enumerates 
\begin{enumerate}
\item
binary trees with $n$ internal nodes avoiding $\comb_k^1$ at any left depth at least $d-1$,
\item
plane trees with multi-degree $(d_0,\ldots,d_n)$ satisfying $d_0+\cdots+d_{i-1}-i \ge d \Rightarrow d_i<k$, $\forall i\in[n]$,
\item
Dyck paths of length $2n$ avoiding $DU^k$ at height at least $d$,
\end{enumerate}
\end{proposition}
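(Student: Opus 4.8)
The statement has three parts, and the first is immediate from what precedes it. By Theorem~\ref{thm:mod-nil}, $C^d_{k,n}$ counts the ${d\choose k}$-components of $\T_n$; by Proposition~\ref{prop:minimal}, each component has a unique ${d\choose k}$-minimal element; and by Proposition~\ref{prop:comb}(i), the ${d\choose k}$-minimal trees in $\T_n$ are exactly the binary trees with $n$ internal nodes that avoid $\comb_k^1$ at every left depth $\ge d-1$. That is assertion (1), so it remains to match this family of binary trees with the plane trees of (2) and the Dyck paths of (3).

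For (2) and (3) the plan is to work through the left-depth encoding. Recall from~\cite[\S2.1]{CatMod} that $t\mapsto\ld(t)=(\ld_0(t),\dots,\ld_n(t))$ is injective, and one checks that its image is the set of sequences $(\ell_0,\dots,\ell_n)$ of nonnegative integers with $\ell_n=0$, with $\ell_i\ge1$ for $i<n$, and with $\ell_{i+1}\ge\ell_i-1$ for all $i$ (so $\ell_{n-1}=1$). The crucial step is to rephrase ${d\choose k}$-minimality as a local condition on this sequence: a left $k$-rotation is available at a node $v$ precisely when the right child of $v$ exists and has a leftmost path of at least $k$ edges, and taking $v$ to be the common ancestor of the consecutive leaves $j$ and $j+1$ --- whose left depth is $\ld_j(t)-1$ --- this leftmost path has $\ld_{j+1}(t)-\ld_j(t)+1$ edges. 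Writing $m_j:=\ld_{j+1}(t)-\ld_j(t)+1\ge 0$, I would thus obtain the criterion: $t$ is ${d\choose k}$-minimal if and only if, for every $j$, $\ld_j(t)\ge d$ implies $m_j<k$.

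For (3), I would send $t$ to the lattice path
\[
P(t):=U^{\ld_0(t)}\,D\,U^{m_0}\,D\,U^{m_1}\cdots D\,U^{m_{n-1}}.
\]
The constraints defining the image of $\ld$ guarantee that $P(t)$ is a Dyck path of length $2n$ and that $t\mapsto P(t)$ is a bijection onto all such paths. In $P(t)$ the down-step opening the $j$-th block departs from height $\ld_j(t)$ and is immediately followed by $m_j$ up-steps, so $P(t)$ contains a factor $DU^k$ at height $h$ exactly when $h=\ld_j(t)$ and $m_j\ge k$ for some $j$. Comparing with the criterion above, $P(t)$ avoids $DU^k$ at every height $\ge d$ if and only if $t$ is ${d\choose k}$-minimal, which is (3). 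For (2), I would compose $P$ with the classical bijection between Dyck paths of length $2n$ and plane trees on $n+1$ nodes --- the one in which a $U$ creates a new rightmost child of the current node and descends to it and a $D$ returns to the parent. Under it the initial ascent $U^{\ld_0(t)}$ and the blocks $DU^{m_0},\dots,DU^{m_{n-1}}$ produce the preorder degree sequence $(d_0,\dots,d_n)=(\ld_0(t),m_0,\dots,m_{n-1})$, whose partial sums recover the entries of $\ld(t)$; translating the criterion through this dictionary yields the degree/partial-sum implication stated in (2).

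I expect the only genuine obstacle to be the local rephrasing in the second paragraph: one must pin down which node of $t$ witnesses an occurrence of $\comb_k^1$, convert its left depth into an index of the leaf sequence, and keep careful track of the shifts by one --- between ``a node at left depth $d-1$'' and ``the leaf one level below it'', and between Dyck-path height (or plane-tree depth) and the indexing of $\ld(t)$. Once the criterion is in place, the explicit description of the image of $\ld$ and the two classical bijections reduce (2) and (3) to bookkeeping.
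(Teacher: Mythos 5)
Your proposal is correct and follows essentially the same route as the paper: part (1) is obtained exactly as you say from Theorem~\ref{thm:mod-nil}, Proposition~\ref{prop:minimal} and Proposition~\ref{prop:comb}, and for (2)--(3) the paper uses the same dictionary you set up, only packaged differently: it contracts the left (northeast--southwest) edges of $t$ to get the plane tree with multi-degree $(d_0,\ldots,d_n)$, cites $\ld_i=d_0+\cdots+d_i-i$ from \cite[Prop.~2.10]{CatMod}, and then reads off the Dyck path $U^{d_0}DU^{d_1}\cdots DU^{d_n}$, which is precisely your $P(t)$ since $d_0=\ld_0(t)$ and $d_{j+1}=m_j$. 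Your explicit computation that the rotation node $v$ (the common ancestor of leaves $j,j+1$) has left depth $\ld_j(t)-1$ and that the relevant left spine has $m_j=\ld_{j+1}(t)-\ld_j(t)+1$ edges is exactly the step the paper disposes of with ``one can check,'' so your write-up is, if anything, more detailed at the crucial point.

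One caution about the bookkeeping you flagged: your criterion ``$\ld_j(t)\ge d\Rightarrow m_j<k$'' is the correct one (minimality forbids rotations at nodes of left depth $\ge d-1$, and $\ld(v)=\ld_j(t)-1$), and it matches (3) on the nose, since the down-step opening the $j$th block of $P(t)$ starts at height $\ld_j(t)$. But translated into degrees it reads $d_0+\cdots+d_{i-1}-(i-1)\ge d\Rightarrow d_i<k$, i.e.\ the threshold for the quantity $d_0+\cdots+d_{i-1}-i$ is $d-1$, not $d$. This does \emph{not} agree with the condition printed in item (2) (and repeated in the last sentence of the paper's proof), which appears to be off by one: for $d=k=1$ and $n=2$ both plane trees on three nodes satisfy the printed condition, while $C^1_{1,2}=1$, and the paper's own height computation for (3) ($d_0+\cdots+d_{i-1}-i+1$) is consistent with your version rather than with the printed (2). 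So when you carry out the deferred translation you should land on the corrected inequality; do not adjust your (correct) criterion to reproduce the statement of (2) verbatim.
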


\begin{proof}
By Proposition~\ref{prop:minimal} and Theorem~\ref{thm:mod-nil}, $C_{k,n}^d$ enumerates the ${d\choose k}$-minimal elements in $\T_n$. 
Combining this with Proposition~\ref{prop:comb} establishes (1).

A binary tree $t$ with $n+1$ leaves corresponds to a plane tree $T$ with $n+1$ nodes by contracting northeast-southwest edges in $t$. 
By~\cite[Proposition~2.10]{CatMod}, the relation between the left depth $\ld(t) = (\ld_0,\ldots,\ld_n)$ and the multi-degree $d(T) = (d_0,\ldots,d_n)$ is given by 
\[ \ld_i = d_0 + \cdots + d_i - i ,\quad\forall i\in\{0,1,\ldots,n\}.\]
A left $k$-rotation at a node $v$ in $t$ corresponds to an up $k$-slide at a node of degree $d_i\ge k$ in $T$ for some $i\in[n]$.
One can check that the left depth of $v$ equals $d_0+\cdots+d_{i-1}-i$.
Thus $t$ is ${d\choose k}$-minimal if and only if $d_0+\cdots+d_{i-1}-i \ge d \Rightarrow d_i<k$ for all $i\in[n]$.
This implies (2).

Next, a plane tree with multi-degree $(d_0,\ldots,d_n)$ corresponds to a Dyck path $U^{d_0}DU^{d_1}\cdots DU^{d_n}$.
The height of the initial point of the $i$th down-step in this Dyck path is $d_0+\cdots+d_{i-1}-i+1$.
Thus (3) follows from (2).
\end{proof}

\begin{remark}\label{rem:LimitCkd}
For any fixed $n$ and $k$, the limit of $C_{k,n}^d$ as $d\to\infty$ is the Catalan number $C_{n}$ since the constraints in Proposition~\ref{prop:objects} are redundant if $d$ is large enough.
\end{remark}

For $k\ge1$ we define $\defcolor{M_{k-1,n}^d}$ to be the number of binary trees in $\T_n$ avoiding $\comb_{k}$ at any left depth at least $d-1$. 
By Proposition~\ref{prop:comb}, $M_{k,n}^d$ counts ${d\choose k}$-maximal elements in $\T_n$.
The number $M_{k,n} := M_{k,n}^1$ is called a \demph{generalized Motzkin number} in our earlier work~\cite{CatMod} and also studied by Tak\'acs~\cite{Takacs}.


\subsection{Generating functions}

For $d,k\ge1$ we define 
\[ C_k^d(x) := \sum_{n\ge0} C_{k,n}^d x^{n+1} \qand
M_{k-1}^d(x) := \sum_{n\ge0} M_{k-1,n}^d x^{n+1}. \]
We also set $C_k^0(x) := M_{k-1}^1(x)$.
To study these generating functions we need the following Lagrange inversion formula.

\begin{theorem}[{Stanley~\cite[Theorem~5.4.2]{EC2}}]
Suppose that $A(x)$ and $B(x)$ are formal power series such that $A(0)=B(0)=0$ and $A(B(x))=x$. Let $n,\ell\in\ZZ$. Then 
\[ n[x^n]B(x)^\ell = \ell[x^{n-\ell}] (x/A(x))^n.\]
\end{theorem}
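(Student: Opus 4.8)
The plan is to give the standard formal-residue proof of the Lagrange inversion formula, working in the field of formal Laurent series $\QQ((x))$ (the asserted identity has no denominators, so its validity over $\QQ$ yields its validity over any coefficient ring of interest, in particular $\ZZ$). For $F=\sum_k a_k x^k\in\QQ((x))$ write $\operatorname{Res}F:=a_{-1}=[x^{-1}]F$. Two elementary facts drive the argument: (a) $\operatorname{Res}F'=0$ for every $F\in\QQ((x))$, whence the integration-by-parts identity $\operatorname{Res}(FG')=-\operatorname{Res}(F'G)$; and (b) the substitution rule $\operatorname{Res}_x F(x)=\operatorname{Res}_u\big(F(A(u))\,A'(u)\big)$, valid for any $F\in\QQ((x))$ and any $A\in\QQ[[x]]$ with $A(0)=0$ and $A'(0)\neq0$. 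Fact (a) follows from $[x^{-1}](x^m)'=0$ for all $m\in\ZZ$; fact (b) reduces by linearity to $F=x^m$, where for $m\neq-1$ one has $A^m A'=\tfrac1{m+1}(A^{m+1})'$ so both sides vanish, and for $m=-1$ both sides equal $1$. Before the computation I would observe that the hypotheses $A(0)=B(0)=0$ and $A(B(x))=x$ force $A,B$ to have order exactly $1$ with reciprocal (hence invertible) linear coefficients, so $B$ is the two-sided compositional inverse of $A$ (in particular $B(A(u))=u$), the series $x/A(x)$ is an honest power series with unit constant term (so $(x/A(x))^n$ is well defined for every $n\in\ZZ$), and every substitution below is legitimate.

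With this in place the computation is short. Start from $n[x^n]B(x)^\ell=n\operatorname{Res}_x\big(B(x)^\ell x^{-n-1}\big)$ and apply (b) with $x=A(u)$, so that $B(x)=B(A(u))=u$; this gives
\[ n[x^n]B(x)^\ell = n\operatorname{Res}_u\!\left(\frac{u^\ell\,A'(u)}{A(u)^{n+1}}\right). \]
Since $A'(u)/A(u)^{n+1}=-\tfrac1n\big(A(u)^{-n}\big)'$, the right-hand side equals $-\operatorname{Res}_u\big(u^\ell(A(u)^{-n})'\big)$, and integration by parts (fact (a)) turns this into $\operatorname{Res}_u\big(\ell u^{\ell-1}A(u)^{-n}\big)=\ell\operatorname{Res}_u\big(u^{\ell-1-n}(u/A(u))^n\big)$. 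As $(u/A(u))^n$ is an ordinary power series, taking the residue just reads off a coefficient, $\operatorname{Res}_u\big(u^{\ell-1-n}G(u)\big)=[u^{n-\ell}]G(u)$, and we arrive at $n[x^n]B(x)^\ell=\ell[x^{n-\ell}](x/A(x))^n$, as claimed.

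The only place that genuinely requires care — and the step I expect to be the main obstacle to write cleanly rather than any conceptual difficulty — is the bookkeeping of orders of vanishing that keeps every object above well defined: that $B(x)^\ell x^{-n-1}$ is a legitimate Laurent series for all $n,\ell\in\ZZ$ (uses $\operatorname{ord}B=1$), that $x\mapsto A(u)$ may be substituted into it (uses $\operatorname{ord}A=1$), and that $(x/A(x))^n$ makes sense for negative $n$ (uses that the constant term of $x/A(x)$ is a unit). None of this is deep, but it is exactly the hypothesis-checking that the residue calculus silently relies on. Once (a) and (b) are set up with attention to these points, no further ideas are needed; the alternative, and the route actually taken in the text, is simply to invoke Stanley~\cite[Theorem~5.4.2]{EC2}.
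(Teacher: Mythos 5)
Your proof is correct, and it is genuinely more than what the paper does: the paper offers no argument at all for this statement, simply importing it as Theorem~5.4.2 of Stanley's \emph{Enumerative Combinatorics}, vol.~2. What you have written is the standard formal-residue proof of Lagrange inversion (essentially one of the proofs Stanley himself gives), and all the pieces check out: the hypotheses $A(0)=B(0)=0$, $A(B(x))=x$ do force $\operatorname{ord}A=\operatorname{ord}B=1$ with reciprocal unit leading coefficients, so $B(A(u))=u$, the substitution rule applies, and $x/A(x)$ is a power series with invertible constant term so that $(x/A(x))^n$ makes sense for all $n\in\ZZ$. Your two lemmas (vanishing of the residue of a derivative, and the change-of-variables formula for residues) are exactly the right infrastructure, and the chain $n[x^n]B^\ell=n\operatorname{Res}_u\bigl(u^\ell A'(u)A(u)^{-n-1}\bigr)=-\operatorname{Res}_u\bigl(u^\ell(A(u)^{-n})'\bigr)=\ell\operatorname{Res}_u\bigl(u^{\ell-1-n}(u/A(u))^n\bigr)=\ell[x^{n-\ell}](x/A(x))^n$ is airtight for $n\ne0$. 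The one cosmetic point you should add is the case $n=0$: the identity $A'/A^{n+1}=-\tfrac1n(A^{-n})'$ divides by $n$, so that step is vacuous there; but for $n=0$ both sides of the asserted formula are $0$ (the right side is $\ell[x^{-\ell}]1=0$ for every $\ell$), so the theorem holds trivially in that case. With that sentence inserted, your argument is a complete, self-contained replacement for the citation; the trade-off is only length versus the paper's one-line appeal to a standard reference.
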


Also recall the following binomial expansion for $m\ge0$:
\[ (1-x)^{-m} = \sum_{i\ge0} {m+i-1\choose i} x^i. \]

\begin{proposition}\label{prop:Ckd}
For $m,n,d\ge0$ and $k\ge1$ we have 
\[ C_k^{d+1}(x) = x \Big/ \left( 1-C_k^d(x) \right) \qand \]
\[ [x^{n+m}] C_k^{d+1}(x)^m = \sum_{0\le i\le n} { m+i-1 \choose i} [x^n] C_k^d(x)^i. \]
\end{proposition}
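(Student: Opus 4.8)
The plan is to prove the two claims in Proposition~\ref{prop:Ckd} in order, using the same ``left border decomposition'' that underlies Proposition~\ref{prop:nil-lr-recurrence} but now keeping track of the ${d\choose k}$-minimal condition. First I would establish the functional equation $C_k^{d+1}(x) = x/(1-C_k^d(x))$. By Proposition~\ref{prop:minimal} and Theorem~\ref{thm:mod-nil}, $C_{k,n}^{d+1}$ counts binary trees in $\T_n$ that avoid $\comb_k^1$ at every left depth $\ge d$. Decompose such a tree $t$ along its left border: if $\ld_0(t)=i$, then $t = t_-^{+i}$ where $t_-$ avoids $\comb_k^1$ at every left depth $\ge d$ as well (contracting the leftmost cherry shifts all left depths down by one, so a forbidden $\comb_k^1$ at left depth $\ge d$ in $t_-$ would lift to one at left depth $\ge d$ in $t$, and conversely any forbidden configuration in $t$ not involving the leftmost cherry descends). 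The catch is that the ``spine above $r_i$'' — the sequence of left children and their right subtrees hanging off the left border of $t$ at depths $0,1,\dots,i-1$ — is itself essentially a sequence of $i$ smaller trees, and the condition that no $\comb_k^1$ occurs at left depth $\ge d$ along that spine translates into: the right subtrees hanging off at depths $\ge d-1$ must themselves avoid $\comb_k^1$ at all relevant left depths, i.e.\ each is counted by $C_k^{d}$ after the appropriate reindexing, while the right subtrees hanging off at depths $<d-1$ are unconstrained. Iterating the peeling $t\mapsto t_-$ until the leftmost leaf reaches depth $0$ gives the geometric-series structure: $C_k^{d+1}(x) = x \cdot \sum_{j\ge 0} C_k^{d}(x)^j = x/(1-C_k^d(x))$, matching the form already seen in Corollary~\ref{cor:nil}.

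Actually the cleanest route to the functional equation is to mimic Proposition~\ref{prop:nil-lr-recurrence} directly: a nonempty weighted-free tree $t$ counted by $C_k^{d+1}$ splits at the root into left and right maximal subtrees $t_L, t_R$; the root has left depth $0 < d$ so it imposes nothing, $t_R$ has all its nodes at left depth $\ge $ their depth within $t_R$ (same offset), so $t_R$ is counted by $C_k^{d+1}$ again, whereas $t_L$ has every node's left depth increased by $1$, so $t_L$ is counted by $C_k^{d}$; but one must be careful that the ``$\comb_k^1$ at left depth $\ge d$'' constraint does not couple $t_L$ and $t_R$ — it does not, because a $\comb_k^1$ subtree is contained in a single maximal subtree of any node it sits below. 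Writing $t$ as a (possibly empty) sequence of such pairs read off the left border, or equivalently solving $C_k^{d+1}(x) = x + C_k^{d}(x)\,C_k^{d+1}(x)$, yields $C_k^{d+1}(x) = x/(1-C_k^d(x))$. The edge case $d=0$, where $C_k^0(x) = M_{k-1}^1(x)$ is defined by fiat, must be checked separately: $C_k^1(x) = x/(1-M_{k-1}^1(x))$ should follow because a ${1\choose k}$-minimal tree (avoiding $\comb_k^1$ at all left depths) peeled along its left border leaves right subtrees that avoid $\comb_k$ at all left depths, i.e.\ are ${1\choose k}$-maximal, hence counted by $M_{k-1}^1$; I would cite our earlier work~\cite{CatMod} for this identity if it is established there, or include the short argument.

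The second identity then follows purely formally from the first by Lagrange inversion. Set $A(x) = x(1-C_k^d(x))$ wait — rather, from $C_k^{d+1}(x)(1 - C_k^d(x)) = x$ I read off that $B(x) := C_k^{d+1}(x)$ satisfies $B(x) = x/(1 - C_k^d(x))$, so with $A$ defined by $x/A(x) = 1/(1-C_k^d(x))$, i.e.\ $A(x) = x(1-C_k^d(x))$, we have $A(0)=B(0)=0$ and $A(B(x)) = x$ (this last is exactly the functional equation, since $A(B(x)) = B(x)(1-C_k^d(B(x)))$; one must confirm $C_k^d(B(x))$ is the intended substitution — more carefully, $x/A(x) = 1/(1-C_k^d(x))$ as power series, and plugging $B(x)$ in gives $B(x)/A(B(x)) = 1/(1-C_k^d(B(x)))$, but we instead use the relation $A(B(x))=x$ which holds because $A(B(x)) = $ the denominator-cleared form of $B = x/(1-C_k^d)$ evaluated consistently). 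Granting $A(B(x))=x$, Stanley's theorem~\cite[Theorem~5.4.2]{EC2} with $\ell = m$ and $n$ replaced by $n+m$ gives
\[
(n+m)[x^{n+m}]\,C_k^{d+1}(x)^m \;=\; m\,[x^{n}]\left(x/A(x)\right)^{n+m} \;=\; m\,[x^n]\,(1-C_k^d(x))^{-(n+m)}.
\]
Now expand $(1-C_k^d(x))^{-(n+m)}$; since $C_k^d(x)$ has zero constant term, $[x^n](1-C_k^d(x))^{-N} = \sum_{i\ge 0}\binom{N+i-1}{i}[x^n]C_k^d(x)^i$ and only $0\le i\le n$ contribute. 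Substituting $N = n+m$ and doing the elementary manipulation $\frac{m}{n+m}\binom{n+m+i-1}{i} = \binom{m+i-1}{i}\cdot\frac{?}{?}$ — here the hoped-for cancellation $\frac{m}{n+m}\binom{n+m+i-1}{i}$ should collapse to $\binom{m+i-1}{i}$ only after also accounting for a missing factor, so I expect the honest computation to pair the $[x^n]C_k^d(x)^i$ term with one more application of Lagrange inversion (to rewrite $[x^n]C_k^d(x)^i$ in a form with an $n$ in the denominator) rather than a bare binomial identity. The main obstacle is precisely this bookkeeping: getting the two appearances of Lagrange inversion (one to produce $(1-C_k^d)^{-(n+m)}$, one to normalize $[x^n]C_k^d(x)^i$) to combine so that the $\frac{m}{n+m}$ and the $i$-dependent factors cancel cleanly into $\binom{m+i-1}{i}[x^n]C_k^d(x)^i$. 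I would handle this by first proving the $m=1$ case (where $C_k^{d+1}=B$ and the identity reads $[x^{n+1}]C_k^{d+1} = \sum_{0\le i\le n}[x^n]C_k^d(x)^i$, which is immediate from $C_k^{d+1} = x/(1-C_k^d)$), then bootstrapping to general $m$ either by the second Lagrange inversion as above or, more slickly, by noting $C_k^{d+1}(x)^m = x^m(1-C_k^d(x))^{-m}$ directly from the functional equation — this last observation makes the whole second claim a one-line binomial expansion: $[x^{n+m}]\,x^m(1-C_k^d)^{-m} = [x^n](1-C_k^d)^{-m} = \sum_{0\le i\le n}\binom{m+i-1}{i}[x^n]C_k^d(x)^i$, using the stated expansion $(1-x)^{-m} = \sum_i\binom{m+i-1}{i}x^i$. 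That direct route avoids Lagrange inversion entirely for the second identity, so I would present it that way and note the Lagrange-inversion formula is what is needed in the more delicate computations later in the section.
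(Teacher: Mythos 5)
Your proof is correct and, after discarding the Lagrange-inversion detour, lands on exactly the paper's argument: the root decomposition $t = t_L \wedge t_R$ with ${d+1\choose k}$-minimality of $t$ equivalent to ${d\choose k}$-minimality of $t_L$ and ${d+1\choose k}$-minimality of $t_R$, giving $C_k^{d+1}(x) = x + C_k^d(x)C_k^{d+1}(x)$ (with $d=0$ cited from \cite{CatMod}), and then the second identity as a one-line binomial expansion of $C_k^{d+1}(x)^m = x^m(1-C_k^d(x))^{-m}$. The direct route you settle on at the end is precisely how the paper presents it.
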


\begin{proof}
For $d=0$ the first equation follows from~\cite[(6)]{CatMod}.
Assume $d\ge1$. Let $t\in\T_n$ with $n\ge1$. 
Denote by $t_L$ and $t_R$ the maximal subtrees rooted at the left and right children of the root of $t$.
By Proposition~\ref{prop:comb}, $t$ is ${d+1\choose k}$-minimal if and only if $t_L$ is ${d\choose k}$-minimal and $t_R$ is ${d+1\choose k}$-minimal. 
Combining this with Proposition~\ref{prop:objects} we have
\[ C_k^{d+1}(x) =  x + C_k^d(x) C_k^{d+1}(x)\]
This implies the first equation.
Applying the binomial expansion gives the second equation.
\end{proof}

\begin{proposition}\label{prop:MC}
For $n,d\ge0$ and $k\ge1$ we have $M_{k-1}^{d+1}(x) = C_k^d(x)$ and $M_{k-1,n}^{d+1} = C_{k,n}^d$.\end{proposition}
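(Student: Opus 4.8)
The plan is to establish a bijection between binary trees in $\T_n$ avoiding $\comb_k$ at every left depth at least $d$ (the $\binom{d+1}{k}$-maximal trees, counted by $M_{k-1,n}^{d+1}$) and binary trees in $\T_n$ avoiding $\comb_k^1$ at every left depth at least $d-1$ (the $\binom{d}{k}$-minimal trees, counted by $C_{k,n}^d$). The cleanest route is to compare generating functions rather than to build the bijection explicitly. First I would record, exactly as in the proof of Proposition~\ref{prop:Ckd}, a self-referential decomposition of a maximal tree at its root: for $t\in\T_n$ with $n\ge1$, write $t = t_L \wedge t_R$ where $t_L, t_R$ are the maximal subtrees at the children of the root. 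The key observation is that $t$ avoids $\comb_k$ at left depth $\ge d$ if and only if $t_L$ avoids $\comb_k$ at left depth $\ge d-1$ (its nodes sit one left-step deeper in $t$) and $t_R$ avoids $\comb_k$ at left depth $\ge d$ (its nodes keep the same left depth), provided $d\ge1$. This yields the functional equation
\[ M_{k-1}^{d+1}(x) = x + M_{k-1}^{d}(x)\, M_{k-1}^{d+1}(x). \]

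Next I would check the base case $d=0$. By definition $C_k^0(x) := M_{k-1}^1(x)$, so the claimed identity $M_{k-1}^{d+1}(x)=C_k^d(x)$ holds for $d=0$. Now both sequences $\bigl(M_{k-1}^{d+1}(x)\bigr)_{d\ge0}$ and $\bigl(C_k^d(x)\bigr)_{d\ge0}$ start at the same series and satisfy the identical recursion: the displayed functional equation gives $M_{k-1}^{d+1}(x) = x/\bigl(1 - M_{k-1}^{d}(x)\bigr)$, which is exactly the first recurrence of Proposition~\ref{prop:Ckd} with $C_k^{d+1}$ and $C_k^d$ replaced by $M_{k-1}^{d+1}$ and $M_{k-1}^{d}$. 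By induction on $d$, $M_{k-1}^{d+1}(x)=C_k^d(x)$ for all $d\ge0$, and extracting the coefficient of $x^{n+1}$ gives $M_{k-1,n}^{d+1}=C_{k,n}^d$.

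The one step that needs genuine care — and is the main obstacle — is the combinatorial claim that "$t$ avoids $\comb_k$ at left depth $\ge d$" decomposes cleanly across the root into the two conditions on $t_L$ and $t_R$ stated above, and in particular that at $d\ge1$ the root itself imposes no extra constraint. Here one must verify that a copy of $\comb_k$ rooted at a node $v$ of $t$ lies entirely within $t_L$ or entirely within $t_R$ — it cannot straddle the root, since $\comb_k$ is the single tree $t_0\wedge\cdots\wedge t_k$ with all $t_i\in\T_0$ and hence its root's maximal subtree is determined; a straddling copy at the root would force $t_L = \comb_{k-1}$ (a tree with $k$ leaves, all of depth structure forced) while the root of $t$ has left depth $0 < d$, so it is irrelevant when $d\ge1$. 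I would spell this out by the same leaf-depth bookkeeping used in Lemma~\ref{lem:rotation} and the proof of Proposition~\ref{prop:minimal}: passing from $t$ to $t_L$ lowers every left depth by $1$, passing to $t_R$ preserves left depths, so forbidden patterns at left depth $\ge d$ in $t$ correspond precisely to forbidden patterns at left depth $\ge d-1$ in $t_L$ and at left depth $\ge d$ in $t_R$. With that in hand the generating-function argument is routine.
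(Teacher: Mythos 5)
Your proposal is correct and follows essentially the same route as the paper: establish the root decomposition $M_{k-1}^{d+1}(x) = x + M_{k-1}^{d}(x)\,M_{k-1}^{d+1}(x)$ in parallel with Proposition~\ref{prop:Ckd}, note the base case $d=0$ holds by the definition $C_k^0(x):=M_{k-1}^1(x)$, and induct on $d$; you merely spell out the depth-shift bookkeeping that the paper leaves implicit. (Minor quibble: trees avoiding $\comb_k$ at left depth $\ge d$ are the $\binom{d+1}{k-1}$-maximal ones, not $\binom{d+1}{k}$-maximal, but this labeling slip plays no role in your argument.)
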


\begin{proof}
\footnote{
It would be interesting to have a bijective proof for this result.}
The result holds for $d=0$ by definition. 
Similarly to the proof of Proposition~\ref{prop:Ckd}, we have
\[ M_k^{d+1}(x) =  x + M_k^d(x) M_k^{d+1}(x), \quad\forall\, d,k\ge0.\]
Hence $M_k^{d+1}(x) = x/(1-M_k^d(x))$.
The result then follows from induction on $d$.
\end{proof}
 
\begin{corollary}\label{cor:interlace}
For $d,k\ge1$ and $n\ge0$ we have $M_{k-1,n}^d \le C_{k,n}^d\le M_{k,n}^d$.
\end{corollary}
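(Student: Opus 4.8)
The plan is to prove the inequality $M_{k-1,n}^d \le C_{k,n}^d \le M_{k,n}^d$ directly from the combinatorial interpretations already established, namely that $C_{k,n}^d$ counts binary trees in $\T_n$ avoiding $\comb_k^1$ at any left depth at least $d-1$ (Proposition~\ref{prop:objects}(1)), while $M_{k-1,n}^d$ and $M_{k,n}^d$ count binary trees in $\T_n$ avoiding $\comb_k$ and $\comb_{k+1}$, respectively, at any left depth at least $d-1$ (by definition of $M^d_{k-1,n}$ and Proposition~\ref{prop:comb}(ii), together with Theorem~\ref{thm:mod-nil}). So all three quantities count pattern-avoiding trees in $\T_n$ with the same notion of ``forbidden at left depth $\ge d-1$''; only the forbidden pattern changes. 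It therefore suffices to observe the pattern-containment implications $\comb_{k+1} \supseteq \comb_k^1 \supseteq \comb_k$ in the appropriate sense: any tree containing $\comb_{k+1}$ at a node $v$ (of left depth $\ge d-1$) contains $\comb_k^1$ at $v$, and any tree containing $\comb_k^1$ at $v$ contains $\comb_k$ at the left child of $v$ (whose left depth is one more, hence still $\ge d-1$). From these implications, the avoidance sets are nested in the order $\{\text{avoid }\comb_{k+1}\} \supseteq \{\text{avoid }\comb_k^1\} \supseteq \{\text{avoid }\comb_k\}$, and taking cardinalities over $\T_n$ gives $M_{k-1,n}^d \le C_{k,n}^d \le M_{k,n}^d$.

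Concretely, I would first recall that $\comb_k = t_0 \wedge \cdots \wedge t_k$ and $\comb_k^1 = t_0 \wedge \comb_k$ with each $t_i \in \T_0$, so $\comb_k^1 = t_0 \wedge (t_1 \wedge \cdots \wedge t_{k+1})$ is the tree obtained from $\comb_{k+1}$ by one right rotation at the root; in particular $\comb_{k+1}$ and $\comb_k^1$ have the same $k+2$ leaves but differ in shape, and $\comb_k^1$ contains $\comb_k$ as the maximal subtree rooted at the right child of its root — a node of left depth $0$ within $\comb_k^1$, i.e., of left depth exactly $1$ greater than the root. Then, for the left inequality: if $t \in \T_n$ contains $\comb_{k+1}$ at a node $v$ with $\ld(v) \ge d-1$, then the maximal subtree at $v$ is $t'_0 \wedge \cdots \wedge t'_{k+1}$ with all $t'_i \in \T_0$; but this subtree, read as $t'_0 \wedge (t'_1 \wedge \cdots \wedge t'_{k+1})$'s ``comb-shaped skeleton'', forces $t$ to contain $\comb_k^1$ at $v$ as well, since the leftmost $k+1$ children form exactly a left comb. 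Hence every tree avoiding $\comb_k^1$ at left depth $\ge d-1$ also avoids $\comb_{k+1}$ there, giving $C_{k,n}^d \le M_{k,n}^d$. For the right inequality: if $t$ contains $\comb_k^1$ at $v$ with $\ld(v)\ge d-1$, the maximal subtree at $v$ is $t'_0 \wedge \comb_k$ where $\comb_k$ sits at the right child $w$ of $v$, and $\ld(w) = \ld(v)+1 \ge d$, so $t$ contains $\comb_k$ at $w$ with $\ld(w) \ge d-1$; hence every tree avoiding $\comb_k$ at left depth $\ge d-1$ also avoids $\comb_k^1$ there, giving $M_{k-1,n}^d \le C_{k,n}^d$.

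Alternatively — and perhaps more cleanly — I would deduce the result from the generating-function identities just proved. By Proposition~\ref{prop:MC} we have $M_{k-1,n}^{d} = C_{k,n}^{d-1}$ (for $d \ge 1$), so the claimed chain $M_{k-1,n}^d \le C_{k,n}^d \le M_{k,n}^d$ is equivalent to $C_{k,n}^{d-1} \le C_{k,n}^d \le M_{k,n}^d$. The inequality $C_{k,n}^{d-1} \le C_{k,n}^d$ is the monotonicity of $C_k^d$ in $d$, which follows coefficient-wise from the recursion $C_k^{d+1}(x) = x/(1 - C_k^d(x))$ in Proposition~\ref{prop:Ckd}: all these series have nonnegative coefficients, and $x/(1-A(x)) = x + xA(x) + xA(x)^2 + \cdots$ is coefficient-wise monotone in $A$, so $C_k^{d-1}(x) \preceq C_k^d(x)$ (coefficient-wise) by induction, using $C_k^0(x) = M_{k-1}^1(x) \preceq C_k^1(x)$ as the base case. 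The remaining inequality $C_{k,n}^d \le M_{k,n}^d$ is then exactly the pattern-nesting argument of the previous paragraph (or: it is the combinatorial statement that ${d\choose k}$-minimal trees inject into ${d\choose k}$-maximal trees, but that requires a chosen map, whereas mere cardinality comparison via avoidance sets is cleaner). The main obstacle, such as it is, is purely bookkeeping: making precise the statement ``$t$ contains $\comb_k^1$ at $v$ implies $t$ contains $\comb_k$ at the left child of $v$'' — one must be careful that ``contains $s$ at left depth $d$'' refers to $s$ being the \emph{maximal} subtree at a node of that left depth (as in the definition preceding Proposition~\ref{prop:comb}), and so the two comb patterns $\comb_k^1$ and $\comb_{k+1}$ have to be compared as whole maximal subtrees, not merely as embedded subtrees; once that convention is pinned down, the implications are immediate from the explicit shapes $\comb_k^1 = t_0 \wedge(t_1\wedge\cdots\wedge t_{k+1})$ and $\comb_{k+1} = t_0\wedge\cdots\wedge t_{k+1}$.
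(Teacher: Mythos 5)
Your treatment of the left inequality $M_{k-1,n}^d\le C_{k,n}^d$ is sound in substance and is essentially the paper's (it is what Proposition~\ref{prop:comb} is cited for): a copy of $\comb_k^1$ rooted at a node $v$ with $\ld(v)\ge d-1$ yields a copy of $\comb_k$ rooted at the \emph{right} child $w$ of $v$, and since a right step does not change left depth, $\ld(w)=\ld(v)\ge d-1$; hence every tree avoiding $\comb_k$ at left depth $\ge d-1$ also avoids $\comb_k^1$ there. (Your statements that the copy of $\comb_k$ sits at the \emph{left} child, resp.\ that $\ld(w)=\ld(v)+1$, are slips, but they do not affect this half.)

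The right inequality is where the proposal breaks. The implication ``$t$ contains $\comb_{k+1}$ at $v$ $\Rightarrow$ $t$ contains $\comb_k^1$ at $v$'' is false, so the claimed nesting $\{\text{avoid }\comb_{k+1}\}\supseteq\{\text{avoid }\comb_k^1\}$ fails: a copy of $\comb_{k+1}$ at $v$ only forces a left chain of internal nodes below $v$ and says nothing about the right child of $v$, which is what $\comb_k^1$ requires. Concretely, for $k=1$, $d=1$, $n=2$, the left comb $(a_0\wedge a_1)\wedge a_2$ contains $\comb_2$ at its root yet avoids $\comb_1^1$ entirely, while the right comb avoids $\comb_2$ and contains $\comb_1^1$; the two avoidance sets are disjoint even though their cardinalities agree, so $C_{k,n}^d\le M_{k,n}^d$ is \emph{not} a set-containment statement. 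Your suggested repair -- reading ``contains $s$ at left depth $d$'' as ``$s$ is the maximal subtree at such a node'' -- misreads the paper's definition (``a subtree rooted at a node'', not the maximal one), would invalidate Proposition~\ref{prop:comb} itself, and cannot rescue the implication since $\comb_{k+1}$ and $\comb_k^1$ cannot both be the maximal subtree at the same node. What is actually needed, and what the paper uses, is the class structure: by Theorem~\ref{thm:mod-nil} and Proposition~\ref{prop:minimal}, $C_{k,n}^d$ counts the $(*,n)$-classes; each class is a ${d\choose k}$-component of the finite poset $\T_n$ and therefore contains at least one ${d\choose k}$-maximal element, and those elements are counted by $M_{k,n}^d$ via Proposition~\ref{prop:comb}(ii); since distinct classes are disjoint, $C_{k,n}^d\le M_{k,n}^d$. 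This is precisely the ``classes inject into maximal elements'' route you mention and set aside. Your generating-function alternative (Proposition~\ref{prop:MC} plus monotonicity of $x/(1-A(x))$) only reproves the left inequality and still leans on the faulty nesting for the right one, so the gap remains.
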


\begin{proof}
The first inequality follows from Proposition~\ref{prop:comb} and the second from Proposition~\ref{prop:minimal}.
\end{proof}

Combining Proposition~\ref{prop:MC} and Corollary~\ref{cor:interlace} we have the following diagram for $d,k\ge1$.
\[ \xymatrix @R=7pt @C=1pt { 
\cdots & & \cdots \ar@{=}[d] & & \cdots & & \cdots \ar@{=}[d] & & \cdots & & \cdots \ar@{=}[d] & & \cdots \\
\cdots & \le & M_{k-1,n}^d & \le & C_{k,n}^d \ar@{=}[d] & \le & M_{k,n}^d & \le & C_{k+1,n}^d \ar@{=}[d] & \le & M_{k+1,n}^d & \le & C_{k+2,n}^d & \le & \cdots \\
& & \cdots & \le & M_{k-1,n}^{d+1} & \le & C_{k,n}^{d+1} & \le & M_{k,n}^{d+1} & \le & C_{k+1,n}^{d+1} & \le & M_{k+1,n}^{d+1} \ar@{=}[u] & \le & C_{k+2,n}^{d+1} & \le & \cdots \\
& & & & \cdots & & \cdots \ar@{=}[u] & & \cdots & & \cdots \ar@{=}[u] & & \cdots & & \cdots \ar@{=}[u] & & \cdots \\
} \]

\begin{proposition}
For $d\ge1$ and $n\ge0$ we have $M_1^d(x) = C_1^d(x)$ and $M_{1,n}^d = C_{1,n}^d$.
\end{proposition}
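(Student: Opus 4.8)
The plan is to show that $C_1^d(x)$ and $M_1^d(x)$ satisfy the same recurrence and agree at $d=1$; an induction on $d$ then gives $M_1^d(x)=C_1^d(x)$, and reading off the coefficient of $x^{n+1}$ yields $M^d_{1,n}=C^d_{1,n}$. Proposition~\ref{prop:Ckd} with $k=1$ already supplies $C_1^{d+1}(x)=x/(1-C_1^d(x))$ for all $d\ge0$. For the $M$-side I would combine Proposition~\ref{prop:MC} with $k=2$, which gives $M_1^{d+1}(x)=C_2^d(x)$ for $d\ge0$, with Proposition~\ref{prop:Ckd} with $k=2$, which gives $C_2^d(x)=x/(1-C_2^{d-1}(x))$ for $d\ge1$; chaining these, for every $d\ge1$,
\[ M_1^{d+1}(x)=C_2^d(x)=\frac{x}{1-C_2^{d-1}(x)}=\frac{x}{1-M_1^d(x)} . \]
Hence both families satisfy $f^{d+1}(x)=x/(1-f^d(x))$ for $d\ge1$.

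It then remains to treat the base case $d=1$, where I claim both sides equal $x/(1-x)$. By Proposition~\ref{prop:comb}(i) together with Proposition~\ref{prop:objects}, the trees counted by $C^1_{1,n}$ are the ${1\choose1}$-minimal trees in $\T_n$, i.e.\ the binary trees avoiding $\comb_1^1$ at every left depth $\ge0$ — equivalently, those in which no node has an internal right child; such a tree has every right child equal to a leaf, so it is the left comb, and there is exactly one such tree with $n$ internal nodes. Thus $C^1_{1,n}=1$ and $C_1^1(x)=\sum_{n\ge0}x^{n+1}=x/(1-x)$. Dually, by Proposition~\ref{prop:comb}(ii) and the defining property of $M^d_{k,n}$, the trees counted by $M^1_{1,n}$ are the ${1\choose1}$-maximal trees in $\T_n$, i.e.\ those avoiding $\comb_2$ at every left depth $\ge0$ — equivalently, those in which no node has an internal left child; such a tree is the right comb, so $M^1_{1,n}=1$ and $M_1^1(x)=x/(1-x)$. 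Therefore $M_1^1(x)=C_1^1(x)$.

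Finally, induct on $d$: the case $d=1$ has just been checked, and if $M_1^d(x)=C_1^d(x)$ then $M_1^{d+1}(x)=x/(1-M_1^d(x))=x/(1-C_1^d(x))=C_1^{d+1}(x)$. So $M_1^d(x)=C_1^d(x)$ for all $d\ge1$, and comparing coefficients of $x^{n+1}$ gives $M^d_{1,n}=C^d_{1,n}$ for all $n\ge0$. I expect the only delicate point to be the base case — identifying, via Proposition~\ref{prop:comb}, the ${1\choose1}$-minimal and ${1\choose1}$-maximal trees as the left and right combs — together with keeping careful track of the ranges of $d$ for which the two recurrences are available (in particular, the identity $M_1^{d+1}(x)=x/(1-M_1^d(x))$ is derived here only for $d\ge1$, which is exactly what the induction needs).
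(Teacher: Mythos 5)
Your proof is correct and follows essentially the same route as the paper: verify the base case $M_1^1(x)=x/(1-x)=C_1^1(x)$ and induct on $d$ using the common recurrence $f^{d+1}(x)=x/(1-f^d(x))$, which the paper takes from Proposition~\ref{prop:Ckd} and the proof of Proposition~\ref{prop:MC}, while you rederive the $M$-side recurrence by chaining the statement of Proposition~\ref{prop:MC} (with $k=2$) with Proposition~\ref{prop:Ckd} (with $k=2$) --- a cosmetic difference only. Your explicit identification of the ${1\choose1}$-minimal and ${1\choose1}$-maximal trees as left and right combs is a correct, slightly more detailed justification of the base case than the paper gives.
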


\begin{proof}
We have $M_1^{1}(x) = x/(1-x) = C_1^{1}(x)$. 
The result then follows from induction on $d$, using the recurrence relations in Proposition~\ref{prop:Ckd} and the proof of Proposition~\ref{prop:MC}.
\end{proof}

\subsection{The case $k=3$ and $e=\ell=1$}\label{sec:k=3}

We already studied $C_{1,n}^d = C^d_n = C^{d,1}_n$ in Section~\ref{sec:nil}.
Now we determine $C_{2,n}^d$.

\begin{proposition}\label{prop:k=2}
For $d,n\ge0$ we have $C_2^d(x) = C_1^{d+1}(x)$ and $C_{2,n}^d=C_{1,n}^{d+1}$.
\end{proposition}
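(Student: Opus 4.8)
The plan is to prove $C_2^d(x)=C_1^{d+1}(x)$ by induction on $d$, using the recurrence $C_k^{d+1}(x)=x/(1-C_k^d(x))$ from Proposition~\ref{prop:Ckd}, which holds uniformly in $k$. The base case is $d=0$: by definition $C_2^0(x)=M_{1}^1(x)$ and $C_1^1(x)$ was computed in Section~\ref{sec:nil} (equivalently via Corollary~\ref{cor:nil}) to be $x/(1-x)$; and by Proposition~\ref{prop:MC} with $k=2$ and $d=0$ we have $M_1^1(x)=C_2^0(x)$, so it suffices to observe $M_1^1(x)=x/(1-x)=C_1^1(x)$, which is immediate since a binary tree avoiding $\comb_2$ at every left depth (i.e.\ at left depth $\ge 0$) is precisely a left comb decorated by nothing, of which there is exactly one with $n$ internal nodes. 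Alternatively, the preceding Proposition already records $M_1^d(x)=C_1^d(x)$, so the $d=0$ case is $C_2^0(x)=M_1^1(x)=C_1^1(x)=C_1^{0+1}(x)$.

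For the inductive step, assume $C_2^d(x)=C_1^{d+1}(x)$. Then, applying Proposition~\ref{prop:Ckd} first with $k=2$ and then with $k=1$,
\[
C_2^{d+1}(x)=\frac{x}{1-C_2^d(x)}=\frac{x}{1-C_1^{d+1}(x)}=C_1^{d+2}(x),
\]
which is the claim for $d+1$. Extracting coefficients of $x^{n+1}$ gives $C_{2,n}^d=C_{1,n}^{d+1}$ for all $n\ge0$.

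The step requiring the most care is the base case, specifically making sure the indexing conventions line up: $C_k^0(x)$ is \emph{defined} to equal $M_{k-1}^1(x)$ (not given by the rotation recurrence, which only applies for $d\ge0$ producing $C_k^{d+1}$ from $C_k^d$), so one must invoke Proposition~\ref{prop:MC} or the just-proved identity $M_1^d(x)=C_1^d(x)$ rather than the recurrence to start the induction. No genuine obstacle arises beyond this bookkeeping; everything else is a one-line application of the uniform-in-$k$ recurrence. One could also give a direct combinatorial explanation — a binary tree avoiding $\comb_2^1$ at left depth $\ge d-1$ and one avoiding $\comb_3^1$ at left depth $\ge d$ should correspond under a ${d\choose k}$-type shift — but the generating-function induction is cleaner and fully rigorous given the results already established.
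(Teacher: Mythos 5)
Your proof is correct and follows essentially the same route as the paper's: base case $C_2^0(x)=M_1^1(x)=x/(1-x)=C_1^1(x)$ from the definition $C_k^0(x):=M_{k-1}^1(x)$, then induction on $d$ via the $k$-uniform recurrence $C_k^{d+1}(x)=x/(1-C_k^d(x))$ of Proposition~\ref{prop:Ckd}. (One trivial quibble: the unique tree in $\T_n$ avoiding $\comb_2$ at every left depth is the \emph{right} comb, not the left comb, though the count of one per $n$, and hence $M_1^1(x)=x/(1-x)$, is unaffected.)
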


\begin{proof}
By definition, $C_{2}^{0}(x) = x/(1-x) =C_1^{1}(x)$.
By Proposition~\ref{prop:Ckd}, we have the same recurrence relation $C_k^{d+1}(x) = x/(1-C_k^d(x))$ for all $k\ge1$.
The result follows from induction on $d$.
\end{proof}

Next, we study $C_{3,n}^d$. 
By our earlier work~\cite{CatMod}, $C_{3,n}^{0}$ is the Motzkin number~\cite[A001006]{OEIS}, which has many closed formulas, and its generating function is 
\[ C_3^{0}(x) = \frac{1 - x - \sqrt{1-2x-3x^2} }{2x} = \frac x{1-x-\frac{x^2}{1-x-\frac{x^2}{\cdots} } }.\]
As a warm-up, we derive some closed formulas for $C_{3,n}^0$, which are probably well known, from the generating function $C_3^0(x)$. 
We have
\[ \sqrt{1-2x-3x^2} = \sqrt{1-3x} \cdot \sqrt{1+x}  = \left( \sum_{i\ge0} \binom{1/2}{i} (-3x)^i \right) \left( \sum_{j\ge0} \binom{1/2}{j} x^j \right) .\]
Thus for $n\ge0$,
\[ C_{3,n}^0 = -\frac12 \sum_{0\le i\le n+2} (-3)^i\binom{1/2}{i} \binom{1/2}{n+2-i}. \]
The right hand side of this equation motivates the definition
\begin{equation}\label{eq:C3n}
\defcolor{C_{3,n}^0} :=
\begin{cases} \frac12, & n=-1, \\
-\frac12, & n=-2, \\
0 , & n\le-3, 
\end{cases}
\end{equation}
On the other hand, we have
\[ \sqrt{1-2x-3x^2} = \sum_{i\ge0} \binom{1/2}{i} (-x)^i(2+3x)^i = \sum_{i\ge0} \binom{1/2}{i} (-x)^i \sum_{0\le j\le i} \binom{i}{j} 2^{i-j}3^jx^j. \]
Thus 
\[ C_{3,n}^0 = -\frac12 \sum_{0\le j\le (n+2)/2} (-1)^{n+2-j} 2^{n+2-2j} 3^j \binom{1/2}{n+2-j} \binom{n+2-j}{j}.\]

Applying Proposition~\ref{prop:Ckd} to the generating function $C_3^0(x)$ gives 
\begin{equation}\label{eq:C3}
 C_3^d(x) = \frac{x}{1-\frac{x}{1- \cdots \frac{x}{1-C_3^{0}(x) }} } 
\end{equation}
where the number of ones is $d$.
Equation~\eqref{eq:C3} appeared in work of Mansour and Vainshtein~\cite{MansourVainshtein} and it is a special case of the generating function studied by Flajolet~\cite{Flajolet} for \demph{labeled positive paths}.
Such a path $L$ starts at $(0,0)$ and stays weakly above the line $y=0$, with three kinds of steps $U=(1,1)$, $D=(1,-1)$, and $H=(1,0)$.
Each step is labeled with some weight, and the \demph{total weight} of $L$ is the sum of all weights of the steps.
The \demph{height} of $L$ is the largest $y$-coordinate of a point on $L$.

By Equation~\eqref{eq:C3} or Remark~\ref{rem:LimitCkd}, the number $C_{3,n}^d$ interpolates between the Motzkin number $C_{3,n}^0$ and the Catalan number $C_n=\lim_{d\to\infty} C_{3,n}^d$.
For $d=1,\ldots,5$, the sequences $\{C_{3,n}^d\}$ are recorded in The OEIS~\cite[A005773, A054391--A054394]{OEIS}. 
For an arbitrary $d$, Barcucci, Del Lungo, Pergola, and Pinzani~\cite{CatPerm} studied $C_{3,n}^d$ in terms of permutations avoiding certain \demph{barred patterns}.

\begin{proposition}
For $d,n\ge0$ the number $C_{3,n}^d$ enumerates the following families of objects.
\begin{itemize}
\item Labeled positive paths with total weight $n$ and no $H$-step strictly below $y=d$, where each $U$-step or $D$-step weakly below $y=d$ has a weight $1/2$ and each other step has a weight $1$.
\item Permutations of $1,2,\ldots,n$ avoiding $321$ and $(d+3)\bar1(d+4)23\cdots(d+2)$ (barred pattern).
\end{itemize}
\end{proposition}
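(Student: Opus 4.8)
The plan is to verify the two claimed combinatorial interpretations of $C_{3,n}^d$ by relating each to the continued-fraction generating function in Equation~\eqref{eq:C3}, which we already know equals $C_3^d(x) = \sum_{n\ge0} C_{3,n}^d x^{n+1}$.

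For the first bijection (labeled positive paths), I would invoke Flajolet's continued fraction theorem \cite{Flajolet}: the generating function counting labeled positive paths weighted by total weight, where the allowed step-labels depend only on the current height, is exactly a continued fraction whose successive numerators and denominators record those height-dependent weights. Concretely, one reads off from \eqref{eq:C3} that at each height $h < d$ the ``denominator'' contribution is $1$ (no $H$-step, since $\frac{x}{1 - \frac{x}{\ddots}}$ has no constant-term $x$ in the denominator there), while at heights $h \ge d$ the denominator picks up the factor coming from $C_3^0(x) = x/(1-x-x^2/(1-x-\cdots))$, i.e.\ an $H$-step of weight $1$ and a $U$-then-$D$ return of combined weight $x^2$. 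Distributing the weight $x^2$ on a $UD$ return as $\frac12 \cdot \frac12$ on the two steps, and noting $x^{n+1}$ versus $x^n$ accounts for the shift between total weight $n$ and the exponent, gives precisely the weighting described in the statement. So the first item is a direct translation of \eqref{eq:C3} through Flajolet's theorem; the only care needed is bookkeeping the factor of $x$ (equivalently, prepending an initial step or leaf) so that total weight $n$ matches $[x^{n+1}]C_3^d(x)$.

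For the second bijection (permutations avoiding $321$ and the barred pattern $(d+3)\bar1(d+4)23\cdots(d+2)$), I would appeal directly to the work of Barcucci, Del Lungo, Pergola, and Pinzani \cite{CatPerm}, who studied exactly this class of barred-pattern-avoiding permutations and computed its generating function. The task reduces to checking that their generating function coincides with our $C_3^d(x)$ from \eqref{eq:C3}: both satisfy the same continued-fraction recursion anchored at the Motzkin series $C_3^0(x)$ (the $d=0$ case being the classical fact that $321$-avoiding permutations with an extra restriction forcing at most one descent-structure are Motzkin-counted), and both satisfy the recursion $C_3^{d+1}(x) = x/(1-C_3^d(x))$ from Proposition~\ref{prop:Ckd} applied with $k=3$. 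Matching the base cases and the recursions forces equality for all $d$.

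The main obstacle I anticipate is purely expository rather than mathematical: making the height-indexed weight assignment in Flajolet's theorem line up cleanly with the nested continued fraction \eqref{eq:C3} without sign or index errors, particularly the split of the weight $x^2$ on a $UD$ pair into two weights of $1/2$ and the boundary behavior exactly at height $y=d$ (where the statement says ``weakly below'' for the $1/2$-weighting but ``strictly below'' for forbidding $H$-steps). I would handle this by writing \eqref{eq:C3} in the explicit form $x/(1 - x/(1 - \cdots - x/(1 - C_3^0(x))))$ with $d$ copies of the ``$1 - x/(\,\cdot\,)$'' pattern, reading the $j$-th level ($j = 0, \dots, d-1$) as height $j$ contributing a denominator of $1$ (hence no $H$-step, only $U$/$D$ with the $x$ in the numerator split as a step of weight... actually a single $U$-step of weight $1$ whose matching $D$ carries the remaining factor), and the innermost $C_3^0(x)$ governing all heights $\ge d$ with its own $1 - x - x^2/(\cdots)$ structure giving the $H$-step of weight $1$ and the $UD$-return of weight $x^2 = \frac12\cdot\frac12$. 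Since the referenced theorems of Flajolet and of Barcucci et al.\ do all the heavy lifting, no genuinely new argument is required beyond this identification of generating functions.
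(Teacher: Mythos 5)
Your approach is the same as the paper's: its entire proof consists of citing Flajolet's Theorem~1 for the first family and equation (12) of Barcucci, Del Lungo, Pergola, and Pinzani for the second, which is exactly what you propose, so the content reduces to the identification of generating functions you describe. One correction at precisely the bookkeeping point you flagged: in \eqref{eq:C3} the numerators equal to $x$ occur at the return levels $h\le d$, and it is these returns---whose $U$ and $D$ steps lie weakly below $y=d$---that carry weight $\tfrac12+\tfrac12=1$, whereas the numerators $x^2$ at levels $h>d$ correspond to $U$/$D$ steps of weight $1$ each (weights \emph{add} to give the exponent of $x$), so your statements ``$UD$-return of weight $x^2=\tfrac12\cdot\tfrac12$'' and ``a single $U$-step of weight $1$ whose matching $D$ carries the remaining factor'' have the assignment backwards and do not match the weighting in the statement; with that fixed, the specialization of Flajolet's theorem gives the first bullet, and matching the base case and the recursion $C_3^{d+1}(x)=x/(1-C_3^d(x))$ against the Barcucci et al.\ generating function gives the second, as you say.
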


\begin{proof}
A specialization of work of Flajolet~\cite[Thm. 1]{Flajolet} gives the first family of objects enumerated by $C_{3,n}^d$.
The second one follows from Barcucci, Del Lungo, Pergola, and Pinzani~\cite[(12)]{CatPerm}.
\end{proof}

Barcucci, Del Lungo, Pergola, and Pinzani~\cite[p. 47]{CatPerm} provided a closed formula for $C_3^d(x)$ but no formula for $C_{3,n}^d$. 
We will provide a different closed formula for $C_3^d(x)$ and derive a closed formula for $C_{3,n}^d$ from that.

\begin{theorem}\label{thm:C3d}
For $d\ge0$ we have
\[ C_3^d(x) = \frac{2xF_{d+1}(x)F_{d+2}(x)-x^d-x^{d+1} + x^d\sqrt{1-2x-3x^2}}{2(F_{d+2}(x)^2-x^d-x^{d+1})}. \]
\end{theorem}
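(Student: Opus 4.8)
The plan is to derive a closed form for $C_3^d(x)$ by solving the continued-fraction recurrence $C_3^{d+1}(x) = x/(1 - C_3^d(x))$ from Proposition~\ref{prop:Ckd} in terms of the Fibonacci polynomials $F_n(x)$, with the initial datum $C_3^0(x) = \tfrac{1-x-\sqrt{1-2x-3x^2}}{2x}$. The key observation is that the map $y \mapsto x/(1-y)$ is a M\"obius (fractional-linear) transformation, represented by the matrix $\bigl(\begin{smallmatrix} 0 & x \\ -1 & 1 \end{smallmatrix}\bigr)$, so the $d$-fold iterate is represented by the $d$-th power of this matrix. The entries of those powers are exactly (up to sign and powers of $x$) the Fibonacci polynomials: one checks by induction that
\[
\begin{pmatrix} 0 & x \\ -1 & 1 \end{pmatrix}^{d} = \begin{pmatrix} -xF_{d-1}(x) & xF_d(x) \\ -F_d(x) & F_{d+1}(x) \end{pmatrix},
\]
using the defining recurrence $F_{n}(x) = F_{n-1}(x) - xF_{n-2}(x)$. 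Indeed, the recurrence $C^d(x) = xF_{d+1}(x)/F_{d+2}(x)$ established in Corollary~\ref{cor:nil} is precisely the image of $0$ under this iterated M\"obius map, which is consistent with this matrix identity.

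Given the matrix identity, I would write $C_3^d(x)$ as the image of $C_3^0(x) =: y_0$ under the transformation with matrix $\bigl(\begin{smallmatrix} -xF_{d-1} & xF_d \\ -F_d & F_{d+1}\end{smallmatrix}\bigr)$, namely
\[
C_3^d(x) = \frac{-xF_{d-1}(x)\,y_0 + xF_d(x)}{-F_d(x)\,y_0 + F_{d+1}(x)}.
\]
Then I substitute $y_0 = \tfrac{1-x-\sqrt{1-2x-3x^2}}{2x}$ and simplify. Clearing the common factor of $2x$ from numerator and denominator, the denominator becomes $-F_d(x)(1-x-\sqrt{1-2x-3x^2}) + 2xF_{d+1}(x)$; using the Fibonacci recurrence one rewrites $-F_d(1-x) + 2xF_{d+1} = 2xF_{d+1} - F_d + xF_d = 2xF_{d+1} - F_{d-1} \cdot x - \dots$, and more to the point one wants to reach the stated form $F_{d+2}(x)^2 - x^d - x^{d+1}$ after multiplying numerator and denominator by the conjugate-style factor $(F_{d+1}(x) + F_d(x)\sqrt{\cdots})$-type expression, or more cleanly by $2xF_{d+1}(x) + F_d(x)\sqrt{1-2x-3x^2} - \dots$. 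The cleanest route is probably to rationalize: multiply top and bottom by $-F_d(x)\bar y_0 + F_{d+1}(x)$ where $\bar y_0$ is the conjugate root, turning the denominator into a polynomial, and then recognize that polynomial via the known identities $F_{d+1}^2 - xF_d^2 - \dots$ together with $1-2x-3x^2 = (1-x)^2 - 4x\cdot x \cdot(\text{something})$ — here the identity $(1-x)^2 - (1-2x-3x^2) = 4x^2(1+x)$ and the product $y_0\bar y_0 = $ (constant term of the quadratic) $= \dots$ will be the levers.

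The main obstacle will be the bookkeeping in the final simplification: verifying that the rationalized denominator collapses to exactly $2(F_{d+2}(x)^2 - x^d - x^{d+1})$ and that the numerator collapses to $2xF_{d+1}F_{d+2} - x^d - x^{d+1} + x^d\sqrt{1-2x-3x^2}$. I would manage this by establishing two polynomial identities in the $F_n$ as lemmas along the way — one for $F_{d+1}^2 - x F_d F_{d-1}$ type combinations and one relating $F_{d+1}^2 + xF_d^2$-type sums to $F_{2d+\epsilon}$ via the known product formulas for Fibonacci polynomials — and by double-checking the result against the small cases $d = 0$ (where it must return $C_3^0(x)$, using $F_1 = F_2 = 1$) and $d = 1$ (using $F_2 = 1$, $F_3 = 1-x$), which pins down all constants. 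An alternative, perhaps safer, strategy is to avoid the M\"obius matrix entirely and prove the formula directly by induction on $d$: assume the displayed expression for $C_3^d(x)$, plug it into $C_3^{d+1}(x) = x/(1 - C_3^d(x))$, and simplify using $F_{d+3} = F_{d+2} - xF_{d+1}$; the algebra is comparable but the inductive framing makes each step a finite verification rather than an open-ended simplification, so I would present the proof that way.
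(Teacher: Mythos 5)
Your plan is sound, and since you say you would ultimately present it as an induction on $d$ via $C_3^{d+1}(x)=x/\bigl(1-C_3^d(x)\bigr)$, simplifying with $F_{d+3}(x)=F_{d+2}(x)-xF_{d+1}(x)$ after rationalizing the $\sqrt{1-2x-3x^2}$ term, this coincides with the paper's proof, which does exactly that and isolates as its key lemma the Cassini-type identity $F_{d+1}(x)^2-F_d(x)F_{d+2}(x)=x^d$ (together with $F_{d+2}(x)^2+xF_{d+1}(x)^2=F_{d+1}(x)F_{d+2}(x)+x^{d+1}$), which your proposed ``two polynomial identities in the $F_n$'' would amount to. One small slip in your side remark: $(1-x)^2-(1-2x-3x^2)=4x^2$, not $4x^2(1+x)$; this does not affect the argument.
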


\begin{proof}
We induct on $d$. 
The result is trivial if $d=0$. 
For $d\ge1$, it follows from Proposition~\ref{prop:Ckd} and the induction hypothesis that
\begin{align*}
C_3^d(x) &= \frac{x}{1-C_3^{d-1}(x)} \\
&= \frac{2x(F_{d+1}(x)^2-x^{d-1}-x^{d}) }{2F_{d+1}(x)^2-2x^{d-1}-2x^d-2xF_{d}(x)F_{d+1}(x)+x^{d-1}+x^{d} - x^{d-1}\sqrt{1-2x-3x^2}} \\
&= \frac{2x(F_{d+1}(x)^2-x^{d-1}-x^{d}) } {2F_{d+1}(x)F_{d+2}(x)-x^{d-1}-x^d-x^{d-1}\sqrt{1-2x-3x^2}} \\
&= \frac{2x(F_{d+1}(x)^2-x^{d-1}-x^{d}) (2F_{d+1}(x)F_{d+2}(x)-x^{d-1}-x^d + x^{d-1}\sqrt{1-2x-3x^2}) } {(2F_{d+1}(x)F_{d+2}(x)-x^{d-1}-x^d)^2-x^{2d-2}(1-2x-3x^2)} \\
&= \frac{(2F_{d+1}(x)^2-2x^{d-1}-2x^{d})(2xF_{d+1}(x)F_{d+2}(x)-x^{d}-x^{d+1} + x^{d}\sqrt{1-2x-3x^2})}{4F_{d+1}(x)^2 F_{d+2}(x)^2-4(x^{d-1}+x^d)F_{d+1}(x)F_{d+2}(x) +4x^{2d-1}+4x^{2d} }.
\end{align*}
This implies the desired expression of $C_3^d(x)$ since
\begin{align*}
& (F_{d+1}(x)^2-x^{d-1}-x^{d}) (F_{d+2}(x)^2-x^d-x^{d+1}) \\
=& F_{d+1}(x)^2 F_{d+2}(x)^2 - (x^{d-1}+x^d) (F_{d+2}(x)^2+xF_{d+1}^2(x)) + x^{2d-1}+2x^{2d}+x^{2d+1} \\
=& F_{d+1}(x)^2 F_{d+2}(x)^2 - (x^{d-1}+x^d) F_{d+2}(x)F_{d+1}(x) +x^{2d-1}+x^{2d}.
\end{align*}
Here the last step follows from
\[ F_{d+2}(x)^2+xF_{d+1}^2(x) = F_{d+2}(x)F_{d+1}(x)-xF_{d+2}(x)F_d(x) + xF_{d+1}^2(x)
= F_{d+2}(x)F_{d+1}(x)+x^{d+1} \]
as one can show $F_{d+1}(x)^2-F_d(x) F_{d+2}(x) = x^d$ by induction on $d$.
\end{proof}

Some examples are given below.
\[ C_3^1(x) 
= \frac{x-3x^2+x\sqrt{1-2x-3x^2}}{2(1-3x)} \]
\[ C_3^2(x) = \frac{2x-7x^2+3x^3 +x^2\sqrt{1-2x-3x^2}}{2(1-4x+3x^2-x^3)} \] 
\[ C_3^3(x) = \frac{2x-10^2+13x^3-5x^4 +x^3\sqrt{1-2x-3x^2}}{2(1-6x+11x^2-7x^3)} \] 

Next, we derive a closed formula for $C_{3,n}^d$ from the expression of $C_3^d(x)$ given by Theorem~\ref{thm:C3d}.

\begin{theorem}\label{thm:C3dn}
For $d\ge1$ and $n\ge0$ we have
\begin{align*}
C_{3,n}^d = \sum_{ \substack{ \alpha\models n+1 \\ h>1\Rightarrow \alpha_h\le d+1} } 
& -\left( C_{3,\alpha_1-d-2}^0 
+ \frac{\delta_{\alpha_1,d}}2 +  (-1)^{\alpha_1}\sum_{i+j=\alpha_1-1} \binom{d-i}{i} \binom{d+1-j}{j} \right) \\
& \cdot \prod_{h\ge2} \left( \left(\delta_{\alpha_h,d}+(-1)^{\alpha_h-1}\sum_{i+j=\alpha_h} \binom{d+1-i}{i} \binom{d+1-j}{j} \right) \right) 
\end{align*}
where $C_{3,m}^0$ is the Motzkin number when $m\ge0$ or defined by Equation~\eqref{eq:C3n} when $m<0$ and 
\[ \defcolor{\delta_{m,d}}:=
\begin{cases} 
1, & m\in\{d,d+1\}, \\ 
0, & \text{otherwise.} 
\end{cases} \]
\end{theorem}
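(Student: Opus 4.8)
The plan is to extract the coefficient of $x^{n+1}$ in the closed form for $C_3^d(x)$ from Theorem~\ref{thm:C3d} by writing it as a product of a numerator series and the inverse of the denominator series. First I would set $G(x) := F_{d+2}(x)^2 - x^d - x^{d+1}$ and observe that $G(0) = F_{d+2}(0)^2 = 1$, so $1/G(x)$ is a well-defined power series; concretely $1/G(x) = \sum_{j\ge0}(1-G(x))^j$, and since $1 - G(x)$ has zero constant term this is a genuine formal power series expansion. The numerator of $C_3^d(x)$ is $N(x) := \tfrac12\bigl(2xF_{d+1}(x)F_{d+2}(x) - x^d - x^{d+1}\bigr) + \tfrac12 x^d\sqrt{1-2x-3x^2}$, so $C_{3,n}^d = [x^{n+1}]\,N(x)\cdot G(x)^{-1} = \sum_{p+q=n+1}[x^p]N(x)\cdot[x^q]G(x)^{-1}$.

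The next step is to identify each ingredient with the combinatorial pieces appearing in the statement. Using $xC_3^0(x) = \tfrac12\bigl(1 - x - \sqrt{1-2x-3x^2}\bigr)$, i.e. $\sqrt{1-2x-3x^2} = 1 - x - 2xC_3^0(x)$, and the extended definition \eqref{eq:C3n} of $C_{3,m}^0$ for $m<0$, one rewrites $\tfrac12 x^d\sqrt{1-2x-3x^2}$ so that its coefficient of $x^p$ becomes $-C_{3,p-d-2}^0 - \tfrac12\delta_{\cdot}$-type terms; meanwhile $\tfrac12(2xF_{d+1}F_{d+2} - x^d - x^{d+1})$ contributes the $\sum_{i+j=\alpha_1-1}\binom{d-i}{i}\binom{d+1-j}{j}$ part via the polynomial expansion $F_{d+1}(x) = \sum_i \binom{d-i}{i}(-x)^i$ and $F_{d+2}(x) = \sum_j\binom{d+1-j}{j}(-x)^j$ together with the sign bookkeeping $(-1)^{\alpha_1}$. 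For the denominator, $1 - G(x) = -F_{d+2}(x)^2 + 1 + x^d + x^{d+1}$ has coefficients $[x^m](1 - G(x)) = -\bigl([x^m]F_{d+2}(x)^2\bigr) + \delta_{m,0} + \delta_{m,d} + \delta_{m,d+1}$; since $F_{d+2}(x)^2$ has constant term $1$, the $m=0$ terms cancel and for $m\ge1$ we get $[x^m](1-G(x)) = \delta_{m,d}+(-1)^{m-1}\sum_{i+j=m}\binom{d+1-i}{i}\binom{d+1-j}{j}$, because $[x^m]F_{d+2}(x)^2 = (-1)^m\sum_{i+j=m}\binom{d+1-i}{i}\binom{d+1-j}{j}$ and one absorbs the $\delta_{m,d+1}$ into the convolution check for that single value of $m$. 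Expanding $G(x)^{-1} = \sum_{j\ge0}(1-G(x))^j$ then produces, for each $q$, a sum over compositions $(\alpha_2,\dots,\alpha_h)$ of $q$ with each part $\le d+1$ (parts exceeding $d+1$ contribute nothing since the relevant coefficient vanishes) of $\prod_{h\ge2}\bigl(\delta_{\alpha_h,d}+(-1)^{\alpha_h-1}\sum_{i+j=\alpha_h}\binom{d+1-i}{i}\binom{d+1-j}{j}\bigr)$.

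Finally I would merge the two expansions: setting $\alpha_1 = p$ (the first part, with no upper bound other than what makes coefficients nonzero) and $(\alpha_2,\dots,\alpha_h)$ the composition coming from $G(x)^{-1}$, with $\alpha_1 + \cdots + \alpha_h = n+1$, gives exactly a composition $\alpha\models n+1$ with $\alpha_h\le d+1$ for $h>1$, and the product of the per-part factors reproduces the displayed formula, including the overall sign. I would check the small-part and boundary cases ($\alpha_1\in\{d,d+1\}$ where the polynomial piece and the $\sqrt{\ }$ piece interact, and $\alpha_1\le d-1$ where $C_{3,\alpha_1-d-2}^0$ uses the negative-index convention \eqref{eq:C3n}) by direct comparison against the examples $C_3^1, C_3^2, C_3^3$ listed after Theorem~\ref{thm:C3d}.

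The main obstacle I anticipate is the sign and index bookkeeping around the $\sqrt{1-2x-3x^2}$ term and the isolated monomials $x^d, x^{d+1}$: getting the $\delta_{m,d}$ corrections, the shift to $C_{3,\alpha_1-d-2}^0$, and the factor $(-1)^{n-\ell(\alpha)}$-type global sign (here hidden inside the per-part $(-1)^{\alpha_h-1}$ and the leading minus) all consistent requires care, as does verifying that parts of size $>d+1$ in positions $h>1$ genuinely kill the term while the first part is unconstrained. Everything else is a routine product-of-series coefficient extraction built on Theorem~\ref{thm:C3d}, the polynomial form of the Fibonacci polynomials, and the identity $F_{d+1}(x)^2 - F_d(x)F_{d+2}(x) = x^d$ already established in the proof of Theorem~\ref{thm:C3d}.
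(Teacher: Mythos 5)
Your proposal matches the paper's proof in essence: both extract $[x^{n+1}]$ from the closed form of Theorem~\ref{thm:C3d} by expanding the numerator through the coefficients of $F_{d+1}(x)F_{d+2}(x)$ and the series for $\sqrt{1-2x-3x^2}$ (tied to the Motzkin numbers with the negative-index convention~\eqref{eq:C3n}), and by inverting the degree-$(d+1)$ denominator $F_{d+2}(x)^2-x^d-x^{d+1}$ as a geometric series, which is exactly what produces the sum over compositions of $n+1$ with parts after the first bounded by $d+1$. Your use of $xC_3^0(x)=\tfrac12\bigl(1-x-\sqrt{1-2x-3x^2}\bigr)$ in place of the paper's direct expansion $\sqrt{1-3x}\cdot\sqrt{1+x}$ is only a cosmetic repackaging of the same bookkeeping.
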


\begin{proof}
We have
\[ F_r(x)F_s(x) = \sum_{0\le i\le (r-1)/2} (-x)^i\binom{r-1-i}{i} \sum_{0\le j\le (s-1)/2} (-x)^j\binom{s-1-j}{j}, \]
\[ \sqrt{1-2x-3x^2} = \sqrt{1-3x} \cdot \sqrt{1+x}  = \left( \sum_{i\ge0} \binom{1/2}{i} (-3x)^i \right) \left( \sum_{j\ge0} \binom{1/2}{j} x^j \right) .\]
Hence
\begin{align*}
& 2xF_{d+1}(x)F_{d+2}(x)-x^d-x^{d+1} + x^d\sqrt{1-2x-3x^2} \\
=& \sum_{n\ge1} x^{n} \left( -\delta_{n,d} +  (-1)^{n-1}2\sum_{i+j=n-1} \binom{d-i}{i} \binom{d+1-j}{j} + \sum_{i+j=n-d} (-3)^i \binom{1/2}{i} \binom{1/2}{j} \right) ,
\end{align*}
\[ F_{d+2}(x)^2-x^d-x^{d+1} = 1 - \sum_{1\le n\le d+1} x^{n} \left(\delta_{n,d}+(-1)^{n-1}\sum_{i+j=n} \binom{d+1-i}{i} \binom{d+1-j}{j} \right) .\] 
Substituting these expressions in the formula for $C_3^d(x)$ given by Theorem~\ref{thm:C3d} and extracting the coefficient of $x^{n+1}$ we obtain the desired formula for $C_{3,n}^d$. 
\end{proof}

We have not found the sequences $\{ C_{k,n}^d \}$ for $k\ge4$ and $d\ge2$ in the literature.


\subsection{The case $d=2$ and $e=\ell=1$}\label{sec:d=2}
As Proposition~\ref{prop:Ckd} gives a way to obtain the numbers $C_{k,n}^{d+1}$ from $C_k^d(x)^m$, we investigate the sequences $[x^{n+m}]C_k^d(x)^m$ for fixed $d$.

We begin with $d=0$ and generalize the closed formulas~\cite[(9) and (11)]{CatMod} for $C_k^{0}(x) = M_{k-1}(x)$.
To state our result, we review some notation below.

Let $\lambda=(\lambda_1,\ldots,\lambda_n)$ be a partition with $m_i$ parts equal to $i$ for $i=0,1,2,\ldots$.
Then 
\begin{itemize}
\item
$|\lambda|=n$ if and only if $m_1+2m_2+\cdots+km_k=n$, and 
\item
$\lambda\subseteq k^n$ if and only if $m_0+\cdots+m_k=n$ and $m_{k+1}=m_{k+2}=\cdots=0$.
\end{itemize}
The \demph{monomial symmetric function} $m_\lambda(x_1,\ldots,x_n)$ is the sum of $x_1^{a_1}\cdots x_n^{a_n}$ for all rearrangement $(a_1,\ldots,a_n)$ of $(\lambda_1,\ldots,\lambda_n)$. 
Taking $x_1=\cdots=x_n=1$ in $m_\lambda$ gives the multinomial coefficient 
\[ m_\lambda(1^n) = {n\choose m_0,m_1,m_2,\ldots}. \]
One sees that
\begin{equation}\label{eq:monomial}
\prod_{1\le i\le n} (1+x_i+x_i^2+\cdots+x_i^k)^n = \sum_{\lambda\subseteq k^n} m_\lambda(x_1,\ldots,x_n). 
\end{equation}

\begin{proposition}\label{prop:l=0}
For $k,m\ge1$ and $n\ge0$, the number of plane forests with $m$ components and $n+m$ total nodes, each of degree less than $k$, is 
\begin{eqnarray*}
[x^{n+m}] M_{k-1}(x)^m 
&=& \frac{m}{n+m} \sum_{0\le j\le n/k} (-1)^j {n+m\choose j} {2n+m-jk-1\choose n+m-1} \\
&=& \frac{m}{n+m} \sum_{ \substack{ \lambda\subseteq(k-1)^{n+m} \\ |\lambda|=n } } m_\lambda(1^{n+m}). 
\end{eqnarray*}
\end{proposition}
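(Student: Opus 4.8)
The plan is to establish both formulas for $[x^{n+m}]M_{k-1}(x)^m$ by Lagrange inversion, using the functional equation for $C_k^0(x)=M_{k-1}(x)$ already available from Proposition~\ref{prop:Ckd} (the $d=0$ case) together with~\cite[(6)]{CatMod}. First I would recall that $M_{k-1}(x)$ counts plane forests with one component and total node count $n+1$ with every node having degree $<k$, equivalently $M_{k-1}(x)=B(x)$ where $x = A(B)$ and $A(x) = x/(1+x+\cdots+x^{k-1})$; this is the standard ``plane trees with bounded degree'' functional equation and follows from the recurrence $M_{k-1}(x)=x(1+M_{k-1}(x)+\cdots+M_{k-1}(x)^{k-1})$, i.e. $M_{k-1}(x) = x\cdot\frac{1-M_{k-1}(x)^k}{1-M_{k-1}(x)}$. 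A plane forest with $m$ components and $n+m$ total nodes of bounded degree is an $m$-tuple of such trees, so it is enumerated by $[x^{n+m}]M_{k-1}(x)^m$, which justifies the combinatorial claim; the content is the two closed-form evaluations.

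Next I would apply the Lagrange inversion formula quoted from Stanley~\cite[Theorem~5.4.2]{EC2} with $\ell = m$ and the exponent ``$n$'' there replaced by $n+m$:
\[
(n+m)\,[x^{n+m}] M_{k-1}(x)^m = m\,[x^{n}]\bigl(x/A(x)\bigr)^{n+m} = m\,[x^n]\bigl(1+x+\cdots+x^{k-1}\bigr)^{n+m}.
\]
For the first evaluation I would write $1+x+\cdots+x^{k-1} = \frac{1-x^k}{1-x}$, so that
\[
\bigl(1+x+\cdots+x^{k-1}\bigr)^{n+m} = (1-x^k)^{n+m}(1-x)^{-(n+m)} = \sum_{j\ge 0}(-1)^j\binom{n+m}{j}x^{jk}\sum_{i\ge 0}\binom{n+m+i-1}{i}x^i,
\]
using the binomial expansion $(1-x)^{-(n+m)}=\sum_i\binom{n+m+i-1}{i}x^i$ recalled just before Proposition~\ref{prop:Ckd}. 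Extracting the coefficient of $x^n$ forces $i = n-jk$, giving
\[
[x^n]\bigl(1+x+\cdots+x^{k-1}\bigr)^{n+m} = \sum_{0\le j\le n/k}(-1)^j\binom{n+m}{j}\binom{2n+m-jk-1}{n+m-1},
\]
and dividing by $n+m$ and multiplying by $m$ yields the first line of the proposition. For the second evaluation I would instead expand $\bigl(1+x+\cdots+x^{k-1}\bigr)^{n+m}$ directly via~\eqref{eq:monomial} with the number of variables set to $n+m$ and then specialized at $x_1=\cdots=x_{n+m}=1$: monomials of total degree $n$ correspond exactly to partitions $\lambda\subseteq (k-1)^{n+m}$ with $|\lambda|=n$, and the coefficient of each is $m_\lambda(1^{n+m})$, so $[x^n](1+\cdots+x^{k-1})^{n+m} = \sum_{\lambda\subseteq(k-1)^{n+m},\,|\lambda|=n} m_\lambda(1^{n+m})$, again dividing by $n+m$ and multiplying by $m$.

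The only real subtlety — and the step I expect to need the most care — is verifying that $M_{k-1}(x)$ really is the compositional inverse of $A(x)=x(1-x)/(1-x^k)$, i.e. pinning down the functional equation precisely and checking the hypotheses $A(0)=B(0)=0$ of the Lagrange inversion theorem; this is where the appeal to Proposition~\ref{prop:Ckd} (with $d=0$, where $C_k^0(x):=M_{k-1}^1(x)$) and to~\cite[(6)]{CatMod} does the work, after rewriting $x/(1-C_k^0(x))$-type relations into the single equation $M_{k-1}(x)=x\cdot\frac{1-M_{k-1}(x)^k}{1-M_{k-1}(x)}$. Everything after that is the routine coefficient extraction sketched above, together with the observation that the two different expansions of the same polynomial power $(1+x+\cdots+x^{k-1})^{n+m}$ give the two stated formulas, which in particular shows they are equal.
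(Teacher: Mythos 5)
Your proposal is correct and follows essentially the same route as the paper: Lagrange inversion applied to $A(x)=x(1-x)/(1-x^k)$ with $B(x)=M_{k-1}(x)$ (via the functional equation $M_{k-1}(x)=x\bigl(1-M_{k-1}(x)^k\bigr)/\bigl(1-M_{k-1}(x)\bigr)$, which the paper cites from~\cite{CatMod} and you correctly rederive from the bounded-degree tree decomposition), then a binomial expansion of $(1-x^k)^{n+m}(1-x)^{-(n+m)}$ for the first formula and the specialization~\eqref{eq:monomial} at $x_1=\cdots=x_{n+m}=1$ for the second. The only cosmetic difference is that you work directly with the exponent $n+m$ where the paper first writes the identity in $n$ and then substitutes $n\mapsto n+m$; your appeal to Proposition~\ref{prop:Ckd} for the functional equation is not quite the right reference, but your independent derivation of that equation makes the argument complete.
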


\begin{proof}
The result follows from~\cite[Proposition~4.5 and (12)]{CatMod}. 
For completeness we include a direct proof here.
Plane forests with $m$ components and $n+m$ total nodes, each of degree less than $k$, are enumerated by $[x^{n+m}]M_{k-1}(x)^m$.
We have $M_{k-1}(x) = x( 1 - M_{k-1}(x)^k ) / (1-M_{k-1}(x) )$ by~\cite[(5)]{CatMod}.
Applying Lagrange inversion to $A(x) = x(1-x)/(1-x^k)$ and $B(x) = M_{k-1}(x)$ gives
\begin{eqnarray}
 [x^n](M_{k-1}(x))^m &=& \frac{m}{n} [x^{n-m}] \frac{(1-x^k)^{n}} {(1-x)^n} \label{eq:l=0a} \\
 &=& \frac{m}{n} [x^{n-m}] \left(1+x+x^2+\cdots+x^{k-1}\right)^n. \label{eq:l=0b}
\end{eqnarray}
Applying binomial expansion to \eqref{eq:l=0a} and replacing $n$ with $n+m$ gives the first formula.
Applying \eqref{eq:monomial} to \eqref{eq:l=0b} and replacing $n$ with $n+m$ gives the second formula.
\end{proof}

\begin{remark}
For $k=1$ we have $M_{k-1}(x) = x$. Thus $\{[x^{n+m}] M_{k-1}(x)^m\} = \{1,0,0,\ldots\}$ for any $m\ge0$.
For $k=2$ we have $M_{k-1}(x) = x/(1-x)$. Thus $[x^{n+m}] M_{k-1}(x)^m = {m+n-1\choose n}$ for $m,n\ge0$.
For $k=3$ the sequences $\{[x^{n+m}](M_{k-1}(x)^m\}$ form the diagonals of the Motzkin triangle~\cite[A026300]{OEIS}; see also~\cite[A002026, A005322--A005325]{OEIS} for $m=2,\ldots,6$.
We have not found any result in OEIS for $k\ge4$ and $m\ge2$.
\end{remark}

We next generalize the closed formulas~\cite[(9) and (11)]{CatMod} for $C_{k,n}^{1} = C_{k,n}$.

\begin{proposition}\label{prop:d=1}
For $k,m,n\ge1$, the number of plane forests with $m$ components and $n$ non-root nodes, each of degree less than $k$, is 
\begin{eqnarray*}
[x^{n+m}] C_k(x)^m &=& \frac{m}{n} \sum_{0\le j\le (n-1)/k} (-1)^j {n\choose j} {2n+m-jk-1\choose n+m} \\
&=& \sum_{\lambda\subseteq(k-1)^n} \frac{n-|\lambda|}{n}{m+n-|\lambda|-1\choose n-|\lambda|} m_\lambda(1^n).
\end{eqnarray*}
\end{proposition}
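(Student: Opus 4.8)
The plan is to pass to generating functions and apply Lagrange inversion, exactly as in the proof of Proposition~\ref{prop:l=0}. I would write $M:=M_{k-1}(x)=C_k^0(x)$; by \cite[(5)]{CatMod} we have $M=x(1-M^k)/(1-M)$, equivalently $x=A(M)$ with $A(y)=y(1-y)/(1-y^k)$, and since $A(0)=M(0)=0$ and $A'(0)=1$, the Lagrange inversion theorem applies to the pair $A,M$.

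First I would establish the enumeration. A plane tree whose root has arbitrary degree while every other node has degree less than $k$ is a root followed by a (possibly empty) ordered list of plane trees all of whose nodes have degree less than $k$; the latter are enumerated by $M$, so such trees are enumerated by $x/(1-M)=C_k^1(x)=C_k(x)$ by Proposition~\ref{prop:Ckd} (the case $d=0$). Hence a plane forest built from $m$ such trees with $n+m$ nodes in total---that is, with $m$ roots and $n$ non-root nodes---is enumerated by $[x^{n+m}]C_k(x)^m$. Since $C_k(x)^m=x^m(1-M)^{-m}$, this equals $[x^n](1-M)^{-m}$.

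Then I would expand $(1-M)^{-m}=\sum_{\ell\ge0}\binom{m+\ell-1}{\ell}M^{\ell}$ and apply Lagrange inversion to each term, getting $[x^n]M^{\ell}=\frac{\ell}{n}[x^{n-\ell}]\left(\frac{1-x^k}{1-x}\right)^n$ for $n\ge1$ (the terms with $\ell>n$ vanish because the right side is a polynomial, and the $\ell=0$ term contributes $0$ since $n\ge1$). Collecting the weights $\binom{m+\ell-1}{\ell}\ell$ into the series $x\frac{d}{dx}(1-x)^{-m}=mx(1-x)^{-m-1}$ turns the sum over $\ell$ into a single extraction,
\[ [x^{n+m}]C_k(x)^m=[x^n](1-M)^{-m}=\frac{m}{n}\,[x^{n-1}]\,\frac{(1-x^k)^n}{(1-x)^{n+m+1}}\qquad (n\ge1). \]
From here the two closed formulas are routine. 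For the first, expand $(1-x^k)^n=\sum_j(-1)^j\binom{n}{j}x^{jk}$ and $(1-x)^{-(n+m+1)}=\sum_i\binom{n+m+i}{i}x^i$, extract $[x^{n-1}]$, and rewrite the resulting binomial via $\binom{2n+m-jk-1}{\,n-1-jk\,}=\binom{2n+m-jk-1}{\,n+m\,}$. For the second, write $\frac{(1-x^k)^n}{(1-x)^{n+m+1}}=(1+x+\cdots+x^{k-1})^n(1-x)^{-(m+1)}$, expand the first factor as $\sum_{\lambda\subseteq(k-1)^n}m_\lambda(1^n)x^{|\lambda|}$ using \eqref{eq:monomial} (with $k-1$ in place of $k$), multiply by $\sum_i\binom{m+i}{i}x^i$, extract $[x^{n-1}]$, and finally use the identity $\frac{m}{n}\binom{m+n-1-|\lambda|}{\,n-1-|\lambda|\,}=\frac{n-|\lambda|}{n}\binom{m+n-|\lambda|-1}{\,n-|\lambda|\,}$; terms with $|\lambda|\ge n$ disappear because of the factor $n-|\lambda|$, which lets the sum run over all $\lambda\subseteq(k-1)^n$.

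I expect no real obstacle here: the only thing demanding care is the bookkeeping around the termwise Lagrange inversion---verifying the hypotheses $A(0)=M(0)=0$, that the $\ell$-sum is genuinely finite, and that the $\ell=0$ term is harmless---after which the three displayed manipulations are elementary binomial and monomial-symmetric-function identities.
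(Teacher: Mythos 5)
Your proof is correct and follows essentially the same route as the paper's: the paper combines Proposition~\ref{prop:Ckd} (which is exactly your expansion $(1-M_{k-1}(x))^{-m}=\sum_i\binom{m+i-1}{i}M_{k-1}(x)^i$ applied to $C_k^1=x/(1-C_k^0)$) with the Lagrange-inversion identity \eqref{eq:l=0a}, collects the $i$-sum into $\frac{m}{n}[x^{n-1}](1-x^k)^n(1-x)^{-(n+m+1)}$, and then reads off the two closed forms by binomial and monomial-symmetric-function expansion. The only cosmetic difference is that the paper extracts the second formula from the pre-collected sum \eqref{eq:d=1a} while you extract it from the collected expression, which amounts to the same binomial identity.
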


\begin{proof}
Combining Proposition~\ref{prop:Ckd} with \eqref{eq:l=0a} we have
\begin{eqnarray}
[x^{n+m}] C_k(x)^m 
&=& \sum_{1\le i\le n} { m+i-1 \choose i} \frac{i}{n} [x^{n-i}] \frac{(1-x^k)^n} {(1-x)^{n} } \label{eq:d=1a} \\
&=& \sum_{1\le i\le n} \frac{m}{n} { m+i-1\choose i-1} [x^{n-i}] \frac{(1-x^k)^n} {(1-x)^{n} } \nonumber \\ 
&=& \frac{m}{n} [x^{n-1}] \frac{(1-x^k)^n} { (1-x)^{n+m+1} }. \label{eq:l=1c}
\end{eqnarray} 
Applying binomial expansion to \eqref{eq:l=1c} gives the first formula.
Applying \eqref{eq:monomial} to \eqref{eq:d=1a} gives the second formula.
\end{proof}

A \demph{weak composition} of $n$ into $m$ parts is a sequence of $m$ nonnegative integers whose sum is $n$.
Our next result shows that the sequences $\{[x^{n+m}]C_k(x)^m\}_{n\ge0}$ are related to weak compositions for $k=1,2$.
The case $k=1$ is well known and the case $k=2$ has been studied by Janji\'c and Petkovi\'c~\cite{WeakComp} with a different approach from ours.
For $k=2$ and $1\le m\le 10$ see also~\cite[A011782, A045623, A058396, A062109, A169792--A169797]{OEIS}.
We have not found any result in the literature for $k\ge3$ and $m\ge2$ except the case $k=3$ and $m=2$~\cite[A036908]{OEIS}.

\begin{corollary}\label{cor:l=1}
For $m,n\ge0$, weak compositions of $n$ into $m$ parts are enumerated by 
\[ [x^{n+m}] C_1(x)^m = {m+n-1\choose n} \]
and weak compositions of $n$ with $m-1$ zero parts are enumerated by
\[ [x^{n+m}] C_2(x)^m = \sum_{0\le i\le m} {m\choose i} {n-1\choose n-i} 2^{n-i}
= \sum_{0\le i \le n} {m+i-1 \choose i} {n-1\choose n-i}.\]
\end{corollary}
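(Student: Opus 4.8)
The plan is to specialize the rational closed forms of $C_1(x)$ and $C_2(x)$ and extract coefficients, reading off each of the claimed sums from a suitable expansion.

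For $k=1$ I would start from $C_1(x)=C_1^1(x)=x/(1-x)$, so that $C_1(x)^m=x^m(1-x)^{-m}$ and the negative binomial series yields $[x^{n+m}]C_1(x)^m=[x^n](1-x)^{-m}=\binom{m+n-1}{n}$. The identification with weak compositions of $n$ into $m$ parts is the classical stars-and-bars count; equivalently it is the $k=1$ instance of the plane-forest description of Proposition~\ref{prop:d=1}, in which every non-root node is forced to be a leaf, so that the forest is recorded exactly by the $m$-tuple of root degrees.

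For $k=2$ the key input is the closed form $C_2(x)=\dfrac{x(1-x)}{1-2x}$, obtained from Proposition~\ref{prop:Ckd} with $d=0$: $C_2(x)=C_2^1(x)=x/(1-C_2^0(x))$ and $C_2^0(x)=M_1^1(x)=x/(1-x)$ (one could also route through $C_2^1(x)=C_1^2(x)=C^2(x)$ via Proposition~\ref{prop:k=2} and Corollary~\ref{cor:nil}). Hence $[x^{n+m}]C_2(x)^m=[x^n]\bigl(\tfrac{1-x}{1-2x}\bigr)^m$. The first formula then follows by writing $\tfrac{1-x}{1-2x}=1+\tfrac{x}{1-2x}$, applying the binomial theorem in the exponent $m$, and extracting $[x^{n-i}](1-2x)^{-i}=\binom{n-1}{n-i}2^{n-i}$ from each summand. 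The second formula I would instead read off from the $k=2$ case of the second display of Proposition~\ref{prop:d=1}: the partitions $\lambda\subseteq 1^n$ are precisely $\lambda=1^r0^{n-r}$ with $m_\lambda(1^n)=\binom{n}{r}$, and the substitution $i=n-r$ together with $\tfrac{i}{n}\binom{n}{i}=\binom{n-1}{n-i}$ collapses $\sum_r\tfrac{n-r}{n}\binom{m+n-r-1}{n-r}\binom{n}{r}$ to $\sum_i\binom{m+i-1}{i}\binom{n-1}{n-i}$; this simultaneously shows that the two sums agree.

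It remains to match $[x^{n+m}]C_2(x)^m$ with the number of weak compositions of $n$ having exactly $m-1$ zero parts. I would argue this directly from the $k=2$ plane-forest description in Proposition~\ref{prop:d=1}: a forest of $m$ plane trees whose non-root nodes all have degree at most one is a sequence of $m$ ``bouquets'', the $j$-th being a root with an ordered list of hanging paths, hence encoded by a (possibly empty) composition $\gamma_j$ of the number of non-root nodes it contains; concatenating $\gamma_1,\ldots,\gamma_m$ with a single $0$ inserted between consecutive blocks produces a weak composition of $n$ whose zero parts are exactly the $m-1$ separators, and the process is reversible. Alternatively one may simply cite Janji\'c and Petkovi\'c~\cite{WeakComp}. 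I expect the coefficient extractions to be entirely routine; the only mild point of care is the bijection's bookkeeping when several consecutive bouquets are empty, which corresponds precisely to a run of zeros in the weak composition and is accommodated by the ``exactly $m-1$ zeros'' count.
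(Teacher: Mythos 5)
Your proposal is correct and follows essentially the same route as the paper: the closed forms $C_1(x)=x/(1-x)$ and $C_2(x)=\tfrac{x(1-x)}{1-2x}$ with the rewriting $\tfrac{1-x}{1-2x}=1+\tfrac{x}{1-2x}$ give the first $k=2$ formula, the second comes from Proposition~\ref{prop:d=1} (your monomial-symmetric-function specialization is equivalent to the paper's use of its Lagrange-inversion form \eqref{eq:d=1a}), and the combinatorial identifications use the same plane-forest descriptions. Your ``insert a zero between consecutive components'' bijection is just a more carefully worded version of the paper's terse $k=2$ bijection, so the approaches coincide.
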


\begin{proof}
By Proposition~\ref{prop:d=1}, weak compositions of $n$ into $m$ parts are enumerated by 
\[ [x^{n+m}]C_1(x)^m = {m+n-1\choose n} \]
since they are in bijection with plane forests with $m$ components and $n$ non-root nodes, each of degree less than $k=1$.
In fact, such a forest is completely determined by the numbers of non-root nodes in its components.   
This implies the above bijection.

Similarly, weak compositions of $n$ with $m-1$ zero parts are enumerated by $[x^{n+m}]C_2(x)^m$, since they are in bijection with plane forests with $m$ components and $n$ non-root nodes, each of degree less than $k=2$. 
To see this bijection, let $v_1,\ldots,v_r$ be the children of the roots of such a forest.
Since non-root nodes have degree at most one, the maximal subtree rooted at each $v_i$ is a path consisting of $a_i$ nodes, and this forest is determined by $a_1,\ldots,a_r$.
We have $a_1+\cdots+a_r=n$ and thus $(a_1,0,a_2,0,\ldots,a_{r-1},0,a_r)$ is a weak composition of $n$ with $m-1$ zero parts. 

Now using Proposition~\ref{prop:Ckd} and Proposition~\ref{prop:k=2} we have
\begin{eqnarray*}
[x^{n+m}]C_2(x)^m &=& [x^n] \frac{(1-x)^m}{(1-2x)^m} \\
&=& [x^n] \left(1+\frac{x}{1-2x}\right)^m \\
&=& \sum_{0\le i\le m} {m\choose i} [x^{n-i}](1-2x)^{-i} \\
&=& \sum_{0\le i\le m} {m\choose i} {n-1\choose n-i} 2^{n-i} .
\end{eqnarray*}
The second formula of $[x^{n+m}]C_2(x)^m$ follows from \eqref{eq:d=1a}.
\end{proof}


Now we study the case $d=2$.

\begin{proposition}
For $m,n\ge0$ and $k\ge1$ we have
\begin{eqnarray*}
[x^{n+m}] C_k^{2}(x)^m &=& {m+n-1\choose n} + \sum_{1\le i\le n-1} {m+i-1\choose i} \frac{i}{n-i} \sum_{0\le j\le \frac{n-i-1}k} (-1)^j {n-i\choose j} {2n-i-jk-1\choose n} \\
&=& {m+n-1\choose n} + \sum_{1\le i\le n-1} {m+i-1\choose i} \sum_{\lambda\subseteq(k-1)^{n-i}} \frac{n-i-|\lambda|}{n-i}{n-|\lambda|-1\choose n-|\lambda|-i} m_\lambda(1^{n-i}).
\end{eqnarray*}
In particular,
\begin{eqnarray*}
C_{k,n}^{2} &=& 1+ \sum_{1\le i\le n-1} \frac{i}{n-i} \sum_{0\le j\le (n-i-1)/k} (-1)^j {n-i\choose j} {2n-i-jk-1\choose n} \\
&=& 1 + \sum_{1\le i\le n-1} \sum_{\lambda\subseteq(k-1)^{n-i}} \frac{n-i-|\lambda|}{n-i}{n-|\lambda|-1\choose n-|\lambda|-i} m_\lambda(1^{n-i}).
\end{eqnarray*}
\end{proposition}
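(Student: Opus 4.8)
The plan is to combine the two recursions already in hand. Specializing the second identity of Proposition~\ref{prop:Ckd} to $d=1$ and using $C_k^1(x)=C_k(x)$ gives
\[ [x^{n+m}]C_k^{2}(x)^m=\sum_{0\le i\le n}\binom{m+i-1}{i}[x^n]C_k(x)^i. \]
First I would dispose of the case $n=0$ directly: the only surviving term is $i=0$, giving $\binom{m-1}{0}[x^0]1=1$, which is exactly $\binom{m+n-1}{n}$ with an empty remaining sum. For $n\ge1$ I would split off the two extreme terms. The $i=0$ term is $\binom{m-1}{0}[x^n]1=0$ since $n\ge1$, and the $i=n$ term is $\binom{m+n-1}{n}[x^n]C_k(x)^n=\binom{m+n-1}{n}$, using that $C_k(x)=\sum_{n\ge0}C_{k,n}x^{n+1}=x+O(x^2)$ (there is a single binary tree with no internal node, so $C_{k,0}=1$), whence $[x^n]C_k(x)^n=1$. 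This produces, for all $n\ge0$,
\[ [x^{n+m}]C_k^{2}(x)^m=\binom{m+n-1}{n}+\sum_{1\le i\le n-1}\binom{m+i-1}{i}[x^n]C_k(x)^i. \]

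Next, for the interior range $1\le i\le n-1$ I would rewrite $[x^n]C_k(x)^i=[x^{(n-i)+i}]C_k(x)^i$ and invoke Proposition~\ref{prop:d=1} with its parameters $(m,n)$ replaced by $(i,n-i)$; the hypotheses $k,m,n\ge1$ there are satisfied precisely because $i\ge1$ and $n-i\ge1$ on this range. Its first displayed formula gives
\[ [x^n]C_k(x)^i=\frac{i}{n-i}\sum_{0\le j\le(n-i-1)/k}(-1)^j\binom{n-i}{j}\binom{2n-i-jk-1}{n}, \]
and its second gives
\[ [x^n]C_k(x)^i=\sum_{\lambda\subseteq(k-1)^{n-i}}\frac{n-i-|\lambda|}{n-i}\binom{n-|\lambda|-1}{n-i-|\lambda|}m_\lambda(1^{n-i}). \]
Substituting each of these into the identity of the previous paragraph yields the two asserted closed forms for $[x^{n+m}]C_k^{2}(x)^m$.

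Finally, the ``in particular'' formulas for $C_{k,n}^{2}$ follow by taking $m=1$: since $C_k^{2}(x)^1=C_k^{2}(x)=\sum_{n\ge0}C_{k,n}^{2}x^{n+1}$, we have $C_{k,n}^{2}=[x^{n+1}]C_k^{2}(x)^1$, and with $m=1$ both $\binom{m+n-1}{n}$ and $\binom{m+i-1}{i}$ collapse to $1$, so the general formula specializes to the stated one. I do not expect any genuine obstacle here; the argument is pure generating-function bookkeeping, and the only points that require attention are the handling of the boundary terms $i=0$ and $i=n$ (and the degenerate case $n=0$) and the verification that Proposition~\ref{prop:d=1} is applicable on the interior summation range $1\le i\le n-1$.
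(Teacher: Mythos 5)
Your proof is correct and follows essentially the same route as the paper: apply the second identity of Proposition~\ref{prop:Ckd} with $d=1$, absorb the boundary terms $i=0$ and $i=n$ into $\binom{m+n-1}{n}$, substitute Proposition~\ref{prop:d=1} with parameters $(i,n-i)$ on the interior range, and specialize $m=1$. The paper's proof is just a terser version of the same bookkeeping, so no further comparison is needed.
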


\begin{proof}
Proposition~\ref{prop:Ckd} implies
\begin{eqnarray*}
[x^{n+m}] C_k^{2}(x)^m &=& \sum_{0\le i\le n} {m+i-1\choose i} [x^n] C_k^{1}(x)^i \\
&=& {m+n-1\choose n} + \sum_{1\le i\le n-1} {m+i-1\choose i} [x^n] C_k^{1}(x)^i. 
\end{eqnarray*}
Substituting formulas from Proposition~\ref{prop:d=1} establishes the result.
\end{proof}


We next study the special case when $d=k=2$.

\begin{proposition}
Let $m,n\ge0$. Then
\[ [x^{n+m}] C_2^{2}(x)^m = \sum_{0\le i\le n} {m+i-1\choose i} \sum_{0\le j\le n-i}{i+j-1\choose j}{n-i-1\choose n-i-j}.\]
In particular, 
\[ C_{2,n}^{2} = \sum_{0\le j\le n} {n+j-1\choose 2j} = F_{2n-1}.\]
\end{proposition}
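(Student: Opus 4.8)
The plan is to obtain the general coefficient identity by composing two results already established earlier, and then to deduce the ``in particular'' statement by a direct coefficient extraction from a closed-form rational function together with a classical Fibonacci identity.

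For the main formula, the first step is to apply Proposition~\ref{prop:Ckd} with $k=2$ and $d=1$, which gives
\[ [x^{n+m}]\,C_2^{2}(x)^m \;=\; \sum_{0\le i\le n}\binom{m+i-1}{i}\,[x^n]\,C_2^{1}(x)^i . \]
Here $C_2^{1}(x)$ is the generating function $C_2(x)$ whose powers are expanded in Corollary~\ref{cor:l=1}. So the second step is to rewrite $[x^n]C_2(x)^i=[x^{(n-i)+i}]C_2(x)^i$ and quote the identity $[x^{n+m}]C_2(x)^m=\sum_{0\le i\le n}\binom{m+i-1}{i}\binom{n-1}{n-i}$ of Corollary~\ref{cor:l=1} with $m$ and $n$ there replaced by $i$ and $n-i$; this yields $[x^n]C_2(x)^i=\sum_{0\le j\le n-i}\binom{i+j-1}{j}\binom{n-i-1}{n-i-j}$. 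Substituting this into the sum above gives exactly the asserted double sum. The boundary instances ($i=0$, $i=n$, or $j=0$) are consistent under the standard conventions $\binom{-1}{0}=1$ and $\binom{a}{b}=0$ for $b<0$, so no case analysis is needed.

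For the ``in particular'' part, putting $m=1$ in the formula just proved and using $\binom{i}{i}=1$ shows that $C_{2,n}^{2}$ equals the displayed double sum; it therefore remains to identify both $C_{2,n}^{2}$ and $\sum_{0\le j\le n}\binom{n+j-1}{2j}$ with $F_{2n-1}$. For the first I would compute the generating function in closed form: by Proposition~\ref{prop:k=2} and Corollary~\ref{cor:nil}, $C_2^{2}(x)=C_1^{3}(x)=C^{3}(x)=x(1-2x)/(1-3x+x^2)$. Writing $a_n:=[x^n](1-3x+x^2)^{-1}$, the recurrence $a_n=3a_{n-1}-a_{n-2}$ with $a_0=1$, $a_1=3$ identifies $a_n=F_{2n+2}$, whence $C_{2,n}^{2}=a_n-2a_{n-1}=F_{2n+2}-2F_{2n}=F_{2n-1}$ by $F_{2n+2}=2F_{2n}+F_{2n-1}$. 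For the second I would use the expansion $F_m(x)=\sum_i\binom{m-1-i}{i}(-x)^i$ recalled just before Corollary~\ref{cor:nil}: evaluating at $x=-1$ with $m=2n-1$ gives $F_{2n-1}=\sum_i\binom{2n-2-i}{i}$, and the substitution $i\mapsto n-1-i$ converts this into $\sum_{0\le j\le n}\binom{n+j-1}{2j}$.

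There is no genuine obstacle here: the argument is a short chain of substitutions into earlier results. The only points that call for care are matching the parameter names when invoking Corollary~\ref{cor:l=1} and keeping track of the binomial-coefficient conventions in the degenerate terms, so I expect the write-up to be brief.
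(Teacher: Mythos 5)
Your derivation of the general coefficient identity is the same as the paper's: apply Proposition~\ref{prop:Ckd} with $(k,d)=(2,1)$ and then expand $[x^n]C_2^1(x)^i$ using Corollary~\ref{cor:l=1} with shifted parameters (the paper phrases this through $C_1^{2}(x)=C_2^{1}(x)$ from Proposition~\ref{prop:k=2}, but it is the identical computation). Where you genuinely diverge is the ``in particular'' statement. The paper sets $m=1$, interchanges the order of summation, and evaluates the inner sum $\sum_i\binom{i+j-1}{j}\binom{n-i-1}{n-i-j}=\binom{n+j-1}{2j}$ by a direct counting argument (choosing $2j$ elements of $[n+j-1]$ according to the position of the $(j+1)$th chosen element), and then simply quotes that this sum is $F_{2n-1}$, noting $C^2_{2,n}=C^3_{1,n}=F_{2n-1}$ as an alternative. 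You instead prove the two equalities $C^2_{2,n}=F_{2n-1}$ and $\sum_j\binom{n+j-1}{2j}=F_{2n-1}$ independently --- the first from the closed form $C_2^2(x)=C_1^3(x)=x(1-2x)/(1-3x+x^2)$ via Proposition~\ref{prop:k=2} and Corollary~\ref{cor:nil}, the second by evaluating the Fibonacci polynomial expansion at $x=-1$ --- so the collapse of the double sum to the single sum is obtained transitively rather than by direct manipulation. This is correct and essentially formalizes the shortcut the paper mentions only in passing; as a bonus it supplies a proof of the binomial-sum identity the paper cites as known. The only points you leave implicit are the one-line facts that $F_m(-1)$ is the ordinary Fibonacci number $F_m$ (clear from the recurrence $F_m(x)=F_{m-1}(x)-xF_{m-2}(x)$ with $F_1=F_2=1$) and the harmless index conventions at $n=0$ (where $F_{2n-1}$ must be read as $F_{-1}=1$, an issue the paper shares).
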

\begin{proof}
Let $m,n\ge0$.
Proposition~\ref{prop:k=2} gives $C_1^{2}(x)=C_2^{1}(x)$ and thus Corollary~\ref{cor:l=1} implies 
\[ [x^{n+m}] C_1^{2}(x)^m 
= \sum_{0\le i \le n} {m+i-1 \choose i} {n-1\choose n-i}.\]
Combining this with Proposition~\ref{prop:Ckd} we have
\[ [x^{n+m}] C_2^{2}(x)^m = \sum_{0\le i\le n} {m+i-1\choose i} \sum_{0\le j\le n-i}{i+j-1\choose j}{n-i-1\choose n-i-j}. \]
In particular, taking $m=1$ we have
\begin{eqnarray*}
C_{2,n}^{2} &=& \sum_{0\le i\le n} \sum_{0\le j\le n-i}{i+j-1\choose j}{n-i-1\choose n-i-j} \\
&=& \sum_{0\le j\le n} \sum_{0\le i\le n-j} {i+j-1\choose j} {n-i-1\choose n-i-j} \\
&=& \sum_{0\le j\le n} {n+j-1\choose 2j}
\end{eqnarray*}
where the last step follows from choosing $2j$ elements from $[n+j-1]$, assuming the $(j+1)$th chosen element is $i+j$ for some $i$.
This sum is known to be the Fibonacci number $F_{2n-1}$, or one can use Proposition~\ref{prop:k=2} and the discussion in Section~\ref{sec:nil} to conclude that $C^2_{2,n} = C^3_{1,n} = F_{2n-1}$.
\end{proof}

\begin{remark}
The limit of the sequence $\{[x^{n+m}]C_k^d(x)^m\}$ as $k\to\infty$ or $d\to\infty$ is $\{ [x^{n+m}] C(x)^m \}$ where
$C(x) := \sum_{n\ge0} C_n x^{n+1}$.
It is well known that $C(x) = x/(1-C(x))$.
Thus for $m\ge1$ and $n\ge0$ applying Lagrange inversion to $A(x) := x(1-x)$ and $B(x) :=C(x) $ gives
\[ [x^{n+m}] C(x)^m = \frac{m}{n+m}[x^{n}] (1-x)^{-(n+m)} = \frac{m}{n+m}{2n+m-1\choose n}. \] 
Hence the sequences $\{ [x^{n+m}] C(x)^m \}$ form the diagonals of Catalan's triangle~\cite[A009766]{OEIS}.
\end{remark}

\section{Questions and Remarks}

The operad theory provides another perspective on the study of the nonassociativity of a binary operation; see, for example, recent work~\cite{MagmaticOperad, CharOp}.
In fact, there is a nonsymmetric operad with respect to each binary operation $*$ and its Hilbert series is $\sum_{n\ge0} C_{*,n} x^{n+1}$.
It would be interesting to study the operads associated with the binary operations considered in this paper.

One can also replace the single operation $*$ in the expression $x_0*x_1*\cdots*x_n$ with multiple different operations defined on the underlying set $X$ and enumerate nonequivalent results obtained by inserting parentheses.

Finally, we remark that it is common in combinatorics to study generalizations of important integer sequences. 
For instance, instead of the Catalan numbers, one may consider the \demph{Stirling number of the second kind} $S(n,k)$, which counts partitions of the set $[n]:=\{1,2,\cdots,n\}$ into $k$ (unordered) blocks.
Another way to understand $S(n,k)$ is through commutation relations and normal ordering, as addressed in the recent monograph by Mansour and Schork~\cite{MansourSchork}.
If $U$ and $V$ are two operators satisfying the relation $UV-VU=1$ then $(VU)^n = \sum_{k=0}^n S(n,k) V^k U^k$~\cite[Theorem~3.5]{MansourSchork} and extensive work has been done on the expansion of more general expressions such as $(V^r U^s)^n$, giving rise to various generalizations of the Stirling numbers~\cite[\S4.1]{MansourSchork}.
Our variations of the Catalan numbers follow a somewhat similar (although directly related) idea.

\end{document}